\documentclass{CustomArticle}

\usepackage{booktabs}

\usepackage[]{algpseudocode}
\usepackage{algorithm}

\usepackage{natbib}

\usepackage{mathdots}

\usepackage{paralist}
\usepackage{enumitem}
\usepackage[framemethod=tikz]{mdframed}
\usepackage[todo=false,compact=false]{QuickCommands}

\providecommand{\DMsp}{\algname{DM-SP}}
\providecommand{\DMvip}{\algname{DM-VIP}}
\providecommand{\ROM}{\algname{ROM}}
\providecommand{\FDS}{\algname{FDS}}
\providecommand{\ARM}{\algname{ARM}}
\providecommand{\FEGM}{\algname{FEGM}}
\providecommand{\MRN}{\text{\upshape MRN}}
\providecommand{\MS}{\text{\upshape MS}}
\providecommand{\EG}{\algname{EG}}

\newcounter{Block-VIP-assumptions}
\newcounter{VIP-assumptions}
\newcounter{SP-assumptions}

\PaperGeneralInfo{
  title={Efficient Gradient Methods for Distributed Saddle Problems},
  date={May 15, 2026},
}
\AddPaperAuthor{
  name={Ruichen Luo},
  affiliation={Institute of Science and Technology Austria},
  email={rluo@ist.ac.at}
}
\AddPaperAuthor{
  name={Anton Rodomanov},
  affiliation={CISPA Helmholtz Center},
  email={anton.rodomanov@cispa.de}
}
\AddPaperAuthor{
  name={Sebastian U. Stich},
  affiliation={CISPA Helmholtz Center},
  email={stich@cispa.de}
}
\PaperAbstract{  The distributed setting for Saddle Problems~(SPs) has recently emerged as a framework for various modern applications in machine learning and multiagent systems.
  Despite its relevance, the theoretical foundations of this setting have not yet been thoroughly established.
  In this paper, we advance this research direction by formalizing the distributed setup for SPs and providing rigorous definitions of communication and computational costs.
  Our main result is a novel decoupled method that achieves optimal communication cost within the zero-respecting framework.
  Our method is based on a multi-stage reduction to the decoupled minimization of residual norms, which yields strict improvements over the best known communication cost for the class and the long-standing oracle cost of the Extragradient method.
  Further, we show by a matching lower bound that our method is communication-optimal within the family of gradient-span algorithms.
  Finally, we study the extension of distributed SP into Variational Inequality Problem~(VIP), which generalizes two-player zero-sum games to multiplayer general-sum games. We show that our decoupled method achieves a new state-of-the-art communication complexity for this broader class.}
\PaperKeywords{
    convex optimization,
    distributed optimization,
    complexity bounds,
    saddle problems,
    variational inequalities,
    extra-gradient method,
    fast gradient method.}
\PaperThanks{The authors thank Ali Zindari and Krishnendu Chatterjee for their helpful discussions and suggestions on this paper.
RL acknowledges the support of ERC CoG 863818 (ForM-SMArt) and Austrian Science Fund (FWF) 10.55776/COE12. Gemini Pro 3.1 was used for part of the writing and analysis.}

\begin{document}

\PrintTitleAndAbstract

\section{Introduction}
\label{sec:introduction}

\paragraph{Motivation.}

Saddle problems~(SPs) and their generalizations, variational inequality problems~(VIPs),
are of fundamental importance to optimization and game theory.
These problems have a wide array of modern applications, including the training of GANs~\citep{goodfellow2014generative}, robust optimization~\citep{ben2002robust}, and equilibrium computation in game theory and multiagent systems~\citep{von1947theory,rosen1965existence,hu1998multiagent}.

The growing scale of modern problems---driven by applications in machine learning, complex game dynamics, and multiagent protocols---renders reliance on a single central processor increasingly impractical.
Beyond scalability, a more crucial factor is that many applications are inherently \emph{distributed}: agents are often geographically dispersed, driven by their own individual interests, and bound by privacy constraints that prohibit the sharing of raw data or utilities.
Consequently, distributed computation has become an essential regime for these problems.
This perspective underlies a growing body of work in large-scale learning, game-theoretic models, and multiagent systems~\citep{mcmahan2017communication, zhangcommunication2024, conitzer2004communication, nisan2006communication, hart2010long, yoon2025multiplayer}.

In this work, we consider a natural setup where the decision variables and oracles of the SPs or VIPs are partitioned among distributed agents.
For instance, in a classic saddle problem~$\min_{\bfx} \max_{\bfy} F(\bfx, \bfy)$, we consider one agent controls the minimizing variable~$\bfx$ while another controls the maximizing variable~$\bfy$.
This partition naturally models, for instance, the strategic autonomy of players in game theory, the interaction protocol in multiagent systems, and the physical separation of the generator and discriminator in GANs~\citep{conitzer2004communication,goodfellow2014generative}. Since the decision variables are coupled within their utilities, these agents must coordinate to reach a mutual equilibrium. To do so, they form a communication network that allows them to exchange certain information, such as their current decision variables.
Thus, this provides a natural distributed setup where the decision variables are separated among the different agents.

While distributed optimization is well-established for finite-sum minimization and federated learning~\citep{schmidt2017minimizing,mcmahan2017communication}, the literature on SPs has primarily focused on extending these data-distributed paradigms~\citep{deng2021local,beznosikov2025distributed}. In contrast, the study of distributed variables and oracles, which are essential to the multi-agent systems, remains a relatively new topic.

Although a few recent works have touched upon this direction~\citep{zhangcommunication2024,zindari2025decoupled,yoon2025multiplayer,yoon2025pearl}, they predominantly focus on algorithms tailored to specific, favorable scenarios. Consequently, a fundamental gap persists: the lack of a theoretical framework for the general distributed settings of SPs and VIPs.
Existing discussions regarding performance often remain at a vague conceptual level, lacking a rigorous formalization of the distributed environment itself.
Specifically, there are no standardized definitions for communication and oracle costs in this context.
Without such a foundation, it can be difficult to determine the performance limits (lower bounds) or to formally compare the efficiency of different protocols.

To enable a rigorous analysis, it is essential to establish metrics that reflect the constraints of distributed multiagent systems, where network latency and bandwidth often dwarf local processing time. In this regime, the primary bottleneck is the communication cost (exchange rounds), while the computational cost (local gradient queries) is a secondary objective. Viewed through this lens, the Extragradient (\EG) method~\citep{tseng1995linear,nemirovski2004prox} serves as the ``gold standard'' baseline, though the challenges associated with it differ by metric. Regarding computational complexity, consistently improving upon \EG for general monotone problems has remained an elusive goal despite over two decades of research. Regarding communication complexity---a metric that has recently come into focus with the rise of distributed systems---\EG similarly defines the current state-of-the-art. Surpassing this baseline in the general setting represents a new but critical open problem.

This leads to the following research questions:
\begin{itemize}
\item \textbf{Formalization and Limits:} How can we rigorously formalize the communication and oracle costs for distributed SPs?
\item \textbf{Communication Efficiency:} Can we design an algorithm that surpasses the state-of-the-art communication bounds for distributed SPs and VIPs?
\item \textbf{Oracle Efficiency:} Is it possible to consistently improve upon the long-standing oracle complexity of the \EG method for general SPs?
\end{itemize}

\paragraph{Contributions.}

We answer the aforementioned questions in the affirmative, which advances the current theory of distributed SPs and VIPs.
\begin{itemize}
    \item In \cref{sec:convex-concave-composite-saddle-point-problems}, we formalize the distributed saddle-point problem, distributed methods, and their communication and oracle costs. We review \EG and other algorithms, casting them as distributed methods to analyze their costs.
    \item In \cref{sec:DM-for-SP}, we start with a template \DMsp algorithm with a simple, one-loop communication protocol, which improves the state-of-the-art communication cost.
    \item Continuing in \cref{sec:multi-stage-reduction-for-distributed-SPs,sec:implementable-algorithm-SP}, after making the novel multi-stage reduction, we equip the template method with a concrete implementation, thereby consistently improving the (long-standing) oracle cost of \EG for general SPs.
    \item In \cref{sec:lower-complexity-bounds-for-distributed-SP}, by drawing connection to classic convex minimization, we establish the lower bounds for both communication and oracle costs for distributed gradient-span algorithms. In particular, our result shows that our \DMsp algorithm is communication-optimal within the gradient-span algorithm family.
    \item Finally, in \cref{sec:monotone-block-VIPs}, we extend the results to multi-agent settings. We propose \DMvip and improve the state-of-the-art communication cost for the class of distributed VIPs.
\end{itemize}

%


\begin{table}[htbp]
    \centering
    \begin{minipage}{\textwidth}
        \centering
        \caption{Summary of algorithms and complexity results for distributed SPs.}
        \label{tab:communication-complexity}
        \footnotesize 
        \setlength{\tabcolsep}{4pt}
        \renewcommand{\arraystretch}{1.3}
        \resizebox{\textwidth}{!}{%
            \begin{tabular}{llcc}
                \toprule
                \textbf{Method} $\sM$ & \textbf{Communication Cost} & \textbf{Better Oracle?}\textsuperscript{a} & \textbf{Multi-Agent}\textsuperscript{b} \\
                \midrule
                EG\textsuperscript{c} &
                $\cO \bigl( \theta \frac{L_{xy} D_x D_y}{\epsilon} + \frac{L_x D_x^2 + L_y D_y^2}{\epsilon} \bigr)$ & No & Yes \\

                DGDA\textsuperscript{c} &
                $\cO \bigl( \log \frac{1}{\epsilon} \bigr)$ \quad (weakly coupled only) & No & Yes \\

                Cat-EG\textsuperscript{c} &
                ${\cO} \Bigl( \Bigl( \frac{L_{\max} \hat D_x \hat D_y}{\epsilon} + \sqrt { \frac {L_x \hat D_x^2 + L_y \hat D_y^2} {\epsilon} } \Bigr) \log ^2 \bigl( \frac{1}{\epsilon} \bigr) \Bigr)$ & Maybe & No \\

                Cat-Cat-DAGDA\textsuperscript{c} &
                ${\cO} \bigl( \frac{ L_{xy} \hat D_x \hat D_y}{\epsilon} \log^3 \bigl( \frac{1}{\epsilon} \bigr) \bigr)$ & Maybe & No \\
                \midrule

                \textbf{Lower Bound} (Thm.~\labelcref{theorem:lower_complexity-bounds}) &
                $\varOmega \bigl( \frac{L_{xy} D_x D_y}{\epsilon} \bigr)$ & -- & -- \\

                \textbf{DM-SP} (Thm.~\labelcref{thm:DROMsp-communication-complexity}) &
                $\cO \bigl( \theta \frac{L_{xy} D_x D_y}{\epsilon} \bigr)$ & Yes & Yes \\
                \bottomrule
                \multicolumn{4}{l}{%
                    \renewcommand{\arraystretch}{1.0}
                    \begin{tabular}{@{}l@{}}
                        \scriptsize \textsuperscript{a} Indicates whether the method's theoretical oracle cost outperforms the Extragradient~(EG) baseline. \\
                        \scriptsize
                        \textsuperscript{b} Indicates whether the method supports multi-agent extensions. \qquad \textsuperscript{c} These methods are for the non-composite subclass $\sP_{\text{\upshape SP}}^\circ$.
                    \end{tabular}%
                }
            \end{tabular}%
        }
    \end{minipage}
\end{table}

\paragraph{Notations.}
Let $[n] \triangleq \{1, \dots, n \}$, for any positive integer $n$.
For any finite-dimensional real vector space $\cE$, we denote its Euclidean norm by $\Norm{\cdot}_{\cE}$ and its dual norm by $\Norm{\cdot}_{\cE^*}$.
Specifically, we equip the space $\cE_x = \R^{n_x}$ with the norm $\Norm{\bfx}_x = \InnerProduct{\bfP_x \bfx}{\bfx}^{1/2}$, where $\bfP_x \colon \cE_x \to \cE_x^*$ is a self-adjoint positive definite operator and the dual pairing $\InnerProduct{\phi_x}{\bfx}$ denotes $\phi_x(\bfx)$. We denote its corresponding dual norm by $\Norm{\cdot}_{x^*}$.
We assume analogous geometries for $\cE_y = \R^{n_y}$, $\cE_i = \R^{n_i}$ ($i \in [K]$), and $\cE_w = \R^{n_w}$, associated with their respective operators $\bfP_y$, $\bfP_i$, and $\bfP_w$.
For a function $\psi \colon \cE \to \R \cup \{+\infty\}$, let $\dom \psi$ denote its effective domain and $\partial \psi(\bfz)$ its subdifferential at $\bfz \in \dom \psi$.
Finally, for any set of vectors $S$, let $\spn S$ denote its linear span.

\section{Saddle problems with distributed oracles}
\label{sec:convex-concave-composite-saddle-point-problems}

In the context of saddle problems, we consider two separate finite-dimensional real vector spaces, $\cE_x = (\R ^{n_x}, \Norm {\cdot}_x)$ and $\cE_y = (\R ^{n_y}, \Norm {\cdot}_y)$.
We are interested in solving composite Saddle Problems~(SPs) of the following form:
\begin{equation}
\label{eq:saddle-point-problem}
\min _{\bfx \in \dom \psi_x} \ \max _{\bfy \in \dom \psi_y} \ \bigl[ F( \bfx, \bfy ) \triangleq f( \bfx, \bfy ) + \psi_x( \bfx ) - \psi_y( \bfy ) \bigr] ,
\end{equation}
where $\psi_x \colon \cE_x \to \R \cup \{ + \infty \}$ and $\psi_y \colon \cE_y \to \R \cup \{ + \infty \}$ represent relatively simple local components (such as regularizers or indicator functions for constrained sets), and $f(\cdot, \cdot)$ is a real-valued coupling function defined on an open set containing the domain $Q \triangleq \dom \psi_x \times \dom \psi_y$. To simplify the notation, we denote the joint variable by $\bfz \triangleq (\bfx, \bfy) \in Q$.

\subsection{Distributed Methods}
\label{sec:distributed-gradient-based-algorithms}

A method is designed to solve a class of problems sharing a common structure. We begin by introducing the general notion of a problem class, and then formalize what it means for a method to be distributed.

\paragraph{Problem class.}

A \emph{problem class}, denoted by $\sP (\sF, \sO_x, \sO_y, \Delta, \epsilon)$, is a family of problems sharing
\begin{itemize}[nosep,leftmargin=*]
    \item a function family $\sF$ from which each problem instance is drawn;
    \item two distributed oracles $\sO_x$ and $\sO_y$, providing first-order information about an instance;
    \item an accuracy measure $\Delta$, quantifying the quality of a candidate solution; and
    \item a target accuracy $\epsilon > 0$.
\end{itemize}
A specific problem instance $P \in \sP$ is determined by a function instance $\cF \in \sF$, the corresponding oracles $(\sO_x^f, \sO_y^f)$, the accuracy measure $\Delta$, and the target accuracy $\epsilon$. Solving $P$ requires an algorithm to output a candidate solution $\bar \bfz$ satisfying $\Delta(\bar \bfz) \le \epsilon$, accessing only the distributed oracles. The specific problem class of interest in this paper is constructed in \cref{sec:problem-class}.

The rest of this subsection formalizes what it means for an algorithm to be a \emph{distributed method} for solving such a problem class, starting with an engineering description and then a mathematical formalization.

\paragraph{Engineering description.}

We consider a distributed setup with two computational agents, Agent~$x$ and Agent~$y$, each maintaining its decision variable ($\bfx \in \dom \psi_x$ and $\bfy \in \dom \psi_y$, respectively) in its own local memory. The two agents are fully distributed: neither has direct access to the other's memory, and they communicate by exchanging messages over a shared channel. Agent~$x$ has local oracle access to its component $\psi_x$ (for example, the ability to evaluate its proximal-point mapping) and to a problem-dependent first-order oracle $\sO_x$ whose concrete form is specified by the problem class; Agent~$y$ has the analogous access to $\psi_y$ and $\sO_y$. The agents proceed in discrete \emph{communication rounds}. Within each round, each agent performs several local computational steps, each consisting of an oracle query at a chosen point, and the two agents exchange messages at the round boundary. After some number of rounds, the algorithm outputs a candidate solution $\bar \bfz = (\bar \bfx, \bar \bfy)$ that approximately solves Problem~\eqref{eq:saddle-point-problem}.

\paragraph{Mathematical formalization.}

We adopt the framework of \emph{information-based complexity}~\citep{nemirovskij1983problem}. To keep the presentation general, we describe the algorithm in terms of abstract distributed oracles $\sO_x$ and $\sO_y$; their concrete instantiation as partial-gradient oracles for SPs is given in \cref{sec:problem-class}.

Suppose an algorithm $\sM$ proceeds in $T$ communication rounds, where $T$ may be chosen adaptively. In each round $t \in \{0, \dots, T-1\}$, Agent~$x$ successively queries $\sO_x$ at $\tau_x^t$ points $\bfz_x^{t, l}$ for $l = 0, \dots, \tau_x^t - 1$; symmetrically, Agent~$y$ queries $\sO_y$ at $\tau_y^t$ points $\bfz_y^{t, l}$. The number of local steps and the choice of query points are both decided by the agent based on its accumulated information.

We track this via \emph{information sets}, modeled as ordered sequences. Let $I_x^{t, l}$ denote the oracle responses collected by Agent~$x$ prior to its $(l{+}1)$-th query in round $t$. The information set is initialized empty, $I_x^{0, 0} = \emptyset$, and each local query appends the corresponding oracle response, so that $I_x^{t, \tau_x^t}$ summarizes the round; Agent~$y$ is symmetric. Each agent's local data, namely the initial point $\bfz^0$ and its component $\psi_x$ (or $\psi_y$), is treated as known a priori and is not part of the accumulating information set. At the round boundary, the agents exchange messages. From an engineering standpoint, each message is a deterministic function of the sender's accumulated local information. Mathematically, no restriction is imposed on how a message is formed, and so without loss of generality we let each agent read the union of both agents' information at the start of the next round:
\begin{equation}\label{eq:info-merge}
I_x^{t+1, 0} = I_y^{t+1, 0} = \bigl( I_x^{t, \tau_x^t}, I_y^{t, \tau_y^t} \bigr) .
\end{equation}
At the conclusion of round $t$, the algorithm produces a candidate solution $\bar \bfz^{t+1} = (\bar \bfx^{t+1}, \bar \bfy^{t+1})$ from the merged information set $I_x^{t+1, 0} = I_y^{t+1, 0}$.

\begin{definition}
\label{def:distributed-gradient-based}
An algorithm $\sM$ is called a \emph{distributed method} if, for every round $t \in \{0, \dots, T-1\}$, the following hold:
\begin{enumerate}[nosep, leftmargin=*]
    \item Each local query point $\bfz_x^{t, l}$, $l \in \{0, \dots, \tau_x^t - 1\}$, is a deterministic function of Agent~$x$'s current information set $I_x^{t, l}$; symmetrically, each $\bfz_y^{t, l}$, $l \in \{0, \dots, \tau_y^t - 1\}$, is a deterministic function of $I_y^{t, l}$.
    \item The candidate solution $\bar \bfz^{t+1}$ is a deterministic function of the merged information set $I_x^{t+1, 0} = I_y^{t+1, 0}$ defined in~\eqref{eq:info-merge}.
\end{enumerate}
\end{definition}

While these deterministic mappings could in principle be randomized, we restrict attention to the deterministic case here for simplicity.

\paragraph{Communication and oracle complexities.}

For a given problem instance $P$ and target accuracy $\epsilon > 0$, the \emph{communication complexity} of $\sM$ on $P$, denoted $T^\sM_P$, is the smallest integer $k \in \{1, \dots, T\}$ such that the candidate solution $\bar \bfz^k$ satisfies the target accuracy. The total numbers of local oracle queries made by Agent~$x$ and Agent~$y$ up to that point are
\[
N_{x, P}^\sM = \sum_{t=0}^{T^\sM_P - 1} \tau_x^t
\quad \text{and} \quad
N_{y, P}^\sM = \sum_{t=0}^{T^\sM_P - 1} \tau_y^t .
\]
The \emph{communication complexity} and \emph{oracle complexity} of $\sM$ over a problem class $\sP$ are defined by taking the supremum over all instances:
\[
T^\sM_{\sP} = \sup_{P \in \sP} T^\sM_P
\quad \text{and} \quad
N_{\sP}^\sM = \sup_{P \in \sP} \bigl( c_x N_{x, P}^\sM + c_y N_{y, P}^\sM \bigr) ,
\]
where $c_x$ and $c_y$ are fixed constants reflecting the computational costs of evaluating a single query to $\sO_x$ and $\sO_y$, respectively. Because network communication typically forms the main bottleneck in distributed environments, we treat the communication complexity as the primary performance metric and the oracle complexity as a secondary measure of local computational effort.

\subsection{Problem class \texorpdfstring{$\sP_{\text{\upshape SP}}$}{P\_SP}}\label{sec:problem-class}

We now specify the problem class of interest in this paper: composite saddle problems with distributed partial-gradient oracles. The four components of a problem class introduced in \cref{sec:distributed-gradient-based-algorithms} are instantiated in turn below.

\paragraph{Function family.}

We consider the function instances satisfying the following assumptions:
\begin{enumerate}[label={\textnormal{(A\arabic*)}}, nosep, leftmargin=*]
    \item \label{item:continuous-and-convex-concave} For any fixed $\bfy \in \dom \psi_y$, the function $f ( \cdot, \bfy )$ is convex; and for any fixed $\bfx \in \dom \psi_x$, the function $f (\bfx, \cdot)$ is concave. The functions $\psi_x$ and $\psi_y$ are proper, closed, and convex.

    \item \label{item:bounded-distance-to-saddle-point}
    Let $D_x, D_y > 0$ be distance parameters, and let $\bfz^0 = (\bfx^0, \bfy^0) \in Q$ be a given initial point. Relative to this initialization, Problem~\eqref{eq:saddle-point-problem} has a saddle point $(\bfx^*, \bfy^*) \in Q$ such that
    \[
    \lVert \bfx^0 - \bfx^* \rVert_x \le D_x
    \qquad \text{and} \qquad
    \lVert \bfy^0 - \bfy^* \rVert_y \le D_y .
    \]

    \item \label{item:function-smooth} The function $f (\cdot, \cdot)$ is continuously differentiable over $Q$. Moreover, its gradients are Lipschitz continuous. That is, with Lipschitz parameters $L_x, L_{xy}, L_y > 0$, for all $\bfx, \bfx^\prime \in \dom \psi_x$ and $\bfy, \bfy^\prime \in \dom \psi_y$, we have:
    \begin{align*}
    \lVert \nabla_x f( \bfx^\prime, \bfy^\prime ) - \nabla_x f( \bfx, \bfy ) \rVert_{x^*} &\le L_x \lVert \bfx^\prime - \bfx \rVert_x + L_{xy} \lVert \bfy^\prime - \bfy \rVert_y , \\
    \lVert \nabla_y f( \bfx^\prime, \bfy^\prime ) - \nabla_y f( \bfx, \bfy ) \rVert_{y^*} &\le L_{xy} \lVert \bfx^\prime - \bfx \rVert_x + L_y \lVert \bfy^\prime - \bfy \rVert_y .
    \end{align*}
    \setcounter{SP-assumptions}{\value{enumi}}
\end{enumerate}

Let $\sF$ denote the \emph{function family} consisting of all instances $\cF = (f, \psi_x, \psi_y, \bfz^0)$ that satisfy \labelcref{item:continuous-and-convex-concave,item:bounded-distance-to-saddle-point,item:function-smooth} for a fixed set of parameters $(L_x, L_{xy}, L_y, D_x, D_y)$. The non-composite case $\psi_x \equiv \psi_y \equiv 0$ is included as a special instance.

\paragraph{Distributed partial-gradient oracle.}

For SPs, the abstract oracles $\sO_x$ and $\sO_y$ are concretely realized as \emph{(deterministic) partial-gradient oracles}: for a given function instance $\cF \in \sF$ with coupling function $f$ and any input point $\bfz \in Q$,
\begin{itemize}
    \item Agent~$x$ queries $\sO_x$, which returns $\sO_x^f(\bfz) = \nabla_x f(\bfz)$.
    \item Agent~$y$ queries $\sO_y$, which returns $\sO_y^f(\bfz) = \nabla_y f(\bfz)$.
\end{itemize}
The oracles are strictly decoupled: each agent queries only its own oracle, with no access to the counterpart's.

As a concrete example, consider the objective $f(\bfz) = g(\bfz) + f_x(\bfx) - f_y(\bfy)$, where $g(\bfz)$ is a coupled global utility, while $f_x(\bfx)$ and $f_y(\bfy)$ are private utilities accessible only to Agents~$x$ and~$y$, respectively. The partial-gradient oracles then take the form
\[
\sO_x^f(\bfz) = \nabla_x g(\bfz) + \nabla f_x(\bfx) \qquad \sO_y^f(\bfz) = \nabla_y g(\bfz) - \nabla f_y(\bfy) ,
\]
for all $\bfz \in Q$. Due to the distributed setting, Agent~$x$ is entirely blind to the private utility $f_y$ and can only execute $\sO_x$, and vice versa.

\paragraph{Accuracy measure.}

To evaluate the quality of a candidate solution $(\bar \bfx, \bar \bfy) \in Q$, we rely on the restricted duality gap. Let $\cB_x \triangleq \{ \bfx \in \cE_x \mid \lVert \bfx^0 - \bfx \rVert_x \le D_x \}$ and $\cB_y \triangleq \{ \bfy \in \cE_y \mid \lVert \bfy^0 - \bfy \rVert_y \le D_y \}$ denote the balls of radii $D_x, D_y$ around the initial points. Over the bounded domain $\cB \triangleq \cB_x \times \cB_y$, the duality gap is defined as
\[
\Delta( \bar \bfx, \bar \bfy ) \triangleq \max_{(\bfx, \bfy) \in \cB \cap Q} \bigl[ F(\bar \bfx, \bfy) - F(\bfx, \bar \bfy) \bigr] .
\]
We say that a pair $(\bar \bfx, \bar \bfy) \in Q$ is an $\epsilon$-\emph{saddle point} of Problem~\eqref{eq:saddle-point-problem} if $\Delta(\bar \bfx, \bar \bfy) \le \epsilon$. Our goal is to design an algorithm that produces such an $\epsilon$-saddle point for a given $\epsilon > 0$.

We remark that for the classic problem of constrained optimization with bounded domains, one can enclose the constrained sets in the balls $\cB_x$ and $\cB_y$ with sufficiently large radius~(e.g., the diameter of the constrained sets); the restricted saddle problem in form~\eqref{eq:saddle-point-problem} then coincides with the original one.

\paragraph{Problem class $\sP _{\textnormal{\upshape SP}}$.}

Assembling the function family $\sF$, the partial-gradient oracles $(\sO_x, \sO_y)$, the duality-gap accuracy measure $\Delta$, and a target accuracy $\epsilon > 0$, we obtain the problem class of interest, denoted by $\sP_{\text{\upshape SP}} (\sF, \sO_x, \sO_y, \Delta, \epsilon)$, or for short $\sP_{\text{\upshape SP}}$. Solving an instance $P \in \sP_{\text{\upshape SP}}$ requires an algorithm to output an $\epsilon$-saddle point of $\cF$ utilizing the distributed oracles.

To facilitate later discussion, we refer to the terms ${L_x D_x^2}$ and ${L_y D_y^2}$ as the \emph{diagonal conditioning}, and the term ${L_{xy} D_x D_y}$ as the \emph{cross-coupled conditioning}.

\subsection{Existing algorithms from literature}
\label{sec:existing-algorithms-for-SP}

In this section, we review existing algorithms for solving SPs and analyze their communication and oracle costs within the distributed method framework.
To keep the presentation concise, we summarize the methods and their limitations below, and defer their detailed algorithmic formulations, trajectories, and complexity propositions to \Cref{appendix:existing-algorithms}.

\textbf{Extragradient (\EG).} The classic \EG method~\citep{nemirovski2004prox,juditsky2011first} naturally fits our framework. Its distributed execution requires two communication rounds per iteration to evaluate coupled partial gradients at both the current and extrapolated points. It provides a robust and natural baseline for communication and oracle costs.

\textbf{Decoupled GDA (DGDA).} The DGDA method~\citep{zindari2025decoupled} attempts to reduce communication overhead by freezing the remote variable and taking multiple local gradient steps. While it achieves a fast logarithmic communication cost, its applications are highly restrictive: it only converges for weakly coupled strongly convex-strongly concave instances. For general problem class $\sP_{\text{\upshape SP}}$, the delayed remote variables cause the local updates to drift, leading the method to diverge.

\textbf{Catalyst acceleration.} Using a Catalyst wrapper around \EG (Cat-EG)~\citep{lin2020near,yang2020catalyst,lan2026novel} accelerates the algorithm's dependence on the diagonal conditioning. However, this comes with \textbf{five significant caveats}: (i)~it requires a complicated, multi-loop communication protocol and careful parameter tuning; (ii)~it is highly sensitive to the inexactness of the diameter estimates $\hat D_x$ and $\hat D_y$; (iii)~it introduces multiplicative logarithmic factors in the complexity; (iv)~under certain conditioning, its theoretical complexity can be strictly worse than the unaccelerated \EG baseline; and (v)~it does not support extensions to multi-agent scenarios~(cf. \cref{sec:monotone-block-VIPs}).

\textbf{Four-loop method.} The Cat-Cat-DAGDA method~\citep{wang2020improved} applies double Catalyst wrappers around a decoupled accelerated GDA to further accelerate the cross-coupling term. Despite this theoretical improvement, it shares all five caveats of Cat-EG, introduces even more complicated nested loops into the communication protocol, and adds further logarithmic factors. Consequently, it serves primarily as a theoretical benchmark rather than a practical method in our setting.

\textbf{Other distributed stochastic gradient methods.} Some recent papers~\citep{zhangcommunication2024,yoon2025multiplayer,yoon2025pearl} consider distributed SPs with stochastic gradient oracles. They propose different communication-efficient approaches; however, when applied to standard deterministic oracles considered in this paper, these methods fail to outperform \EG.

Consequently, as summarized in \cref{tab:communication-complexity}, the classic \EG method remains a formidable baseline for $\sP_{\text{\upshape SP}}$, and \emph{improving its communication and oracle complexity remains a significant challenge}.

\section{Decoupled method for SPs}
\label{sec:DM-for-SP}

When designing a communication-efficient method, the primary challenge is enabling distributed agents to compute local solutions independently despite the presence of cross-coupled functions. To address this, we propose a clean algorithmic template (or communication protocol) \emph{that reduces an SP into a sequence of coordinate-wise computational tasks}. We highlight the key results and insights below, deferring the detailed derivation to \cref{sec:multi-stage-reduction-for-distributed-SPs}.

\paragraph{Assembled norm.}

Given parameters $\alpha_x, \alpha_y > 0$ (to be specified later), we equip the joint space $\cE = \cE_x \times \cE_y$ with the assembled norm:
\begin{equation}\label{eq:assembled-norm-SP}
\Norm {\bfz} _{\cE} = {\InnerProduct {\bfP \bfz}{\bfz}} ^{\frac 1 2} = \sqrt { \alpha_x \Norm {\bfx} _x ^2 + \alpha_y \Norm {\bfy} _y^2 } \quad \text{for all $\bfz \in \cE$,}
\end{equation}
which corresponds to the block diagonal linear operator \( \bfP = \alpha_x \bfP_x \oplus \alpha_y \bfP_y \).

\paragraph{Template \DMsp.}

\Cref{alg:template-decoupled-reduced-operator-method-for-SP} outlines the Decoupled Method for Saddle Problems~(\DMsp), which adapts the abstract framework of the Reduced-Operator Method~\citep{nesterov2023high} for distributed environments. The algorithm maintains a sequence of anchor points $\bfv^t$ and proceeds iteratively.

First, the agents decouple the joint problem by fixing the remote variable at the current anchor $\bfv^t$. This allows Agent~$x$ and Agent~$y$ to independently and concurrently solve their respective regularized local subproblems up to target accuracies $\delta_x^{t+1}$ and $\delta_y^{t+1}$~(Lines~\ref{line:find-small-gradient-norm-SP-x} and~\ref{line:find-small-gradient-norm-SP-y}). Specifically, Agent~$x$ aims to approximately compute $\argmin _{\bfx \in \dom \psi_x} [ f (\bfx, \bfv^t_y) + \frac{\alpha_x \lambda_{t+1}}{2} \Norm{\bfx - \bfv^t_x}_x^2 + \psi_x (\bfx) ]$ by finding a point $\bfx^{t+1}$ whose regularized subgradient norm satisfies the exact mathematical bound specified in Line~\ref{line:find-small-gradient-norm-SP-x}. Agent~$y$ symmetrically performs an approximate minimization for its corresponding objective $-f (\bfv^t_x, \bfy) + \frac{\alpha_y \lambda_{t+1}}{2} \Norm{\bfy - \bfv^t_y}_y^2 + \psi_y (\bfy)$.

Following this local computation phase, the agents perform exactly two communication rounds to complete the iteration. In the first round (Line~\ref{line:exchange-variables}), the agents exchange their locally computed approximate solutions to assemble the joint intermediate point $\bfz^{t+1} = (\bfx^{t+1}, \bfy^{t+1})$. In the second round (Line~\ref{line:compute-extragradient}), they use this assembled point to evaluate their local partial gradients, which they then exchange to form the full joint operator $V_\psi(\bfz^{t+1})$.

Finally, using this assembled operator, the agents compute a closed-form step size $a_{t+1}$, update the running ergodic average $\bar{\bfz}^{t+1}$, and perform an extragradient-like step to generate the next anchor $\bfv^{t+1}$~(Lines~\ref{line:compute-reduced-stepsize} and~\ref{line:reduced-gradient-step-sp}). By structuring the method this way, \DMsp cleanly reduces the coupled SP into isolated coordinate-wise tasks with minimal communication overhead.

\begin{algorithm}[htb]
\caption{$\DMsp (f, (\psi_x, \psi_y), \bfz^0, (\lambda_{t})_{t \ge 1}, (\alpha_x, \alpha_y) )$}
\label{alg:template-decoupled-reduced-operator-method-for-SP}
\begin{algorithmic}[1]
    \State $\bfv ^{0} = (\bfv ^0 _x, \bfv ^0 _y) = \bfz ^0$.
    \For {$t=0,1,\dots,T-1$}
        \State Let $\delta^{t+1}_x = \frac {\alpha_x \lambda_{t+1}} {2}$ and $\delta^{t+1}_y = \frac {\alpha_y \lambda_{t+1}} {2}$.
        \State \label{line:find-small-gradient-norm-SP-x}Agent~$x$ finds $\bfx^{t+1}$ and $\psi_x^\prime(\bfx^{t+1}) \in \partial \psi_x (\bfx^{t+1})$ such that
        \[
            \Norm { \nabla _x f (\bfx^{t+1}, \bfv_y^t) + \alpha_x \lambda_{t+1}(\bfx^{t+1} - \bfv_x^t) + \psi_x^\prime(\bfx^{t+1}) } _{x^*} \le \delta^{t+1}_x \Norm {\bfx^{t+1} - \bfv^t_x} _x .
        \]
        \State \label{line:find-small-gradient-norm-SP-y}Agent~$y$ finds $\bfy^{t+1}$ and $\psi_y^\prime(\bfy^{t+1}) \in \partial \psi_y (\bfy^{t+1})$ such that
        \[
            \Norm { -\nabla _y f (\bfv^{t}_x, \bfy^{t+1}) + \alpha_y \lambda_{t+1}(\bfy^{t+1} - \bfv_y^t) + \psi_y^\prime(\bfy^{t+1}) } _{y^*} \le \delta^{t+1}_y \Norm {\bfy^{t+1} - \bfv^t_y} _y .
        \]
        \State \label{line:exchange-variables}Exchange $\bfx^{t+1}$ and $\bfy^{t+1}$ to assemble $\bfz^{t+1} = (\bfx^{t+1}, \bfy^{t+1})$.
        \State \label{line:compute-extragradient}Calculate corresponding coordinates of $V_\psi(\bfz^{t+1})$, then exchange to assemble:
        \[
            V _\psi (\bfz^{t+1}) = \bigl( \nabla _x f(\bfz^{t+1}) + \psi_x^\prime(\bfx^{t+1}), -\nabla_y f(\bfz^{t+1}) + \psi_y^\prime(\bfy^{t+1}) \bigr) .
        \]
        \State \label{line:compute-reduced-stepsize}Let $a_{t+1} = \frac { 2 \InnerProduct {V _\psi (\bfz^{t+1})}{ \bfv^t - \bfz^{t+1} } } { \Norm {V _\psi (\bfz^{t+1})} _{\cE^*} ^2 }$ and generate solution $\bar \bfz^{t+1} = \bigl( \sum _{i=1} ^{t+1} a_{i} \bigr) ^{-1} \sum _{i=1} ^{t+1} a_{i} \bfz^{i}$.
        \State \label{line:reduced-gradient-step-sp}$\bfv^{t+1} = \argmin _{\bfv \in Q}\ \bigl[ a_{t+1} \InnerProduct {V _\psi (\bfz^{t+1})}{\bfv} + \frac 1 2 \Norm {\bfv - \bfv^t} _{\cE} ^2 \bigr]$.
    \EndFor
\end{algorithmic}
\end{algorithm}

We refer to \cref{alg:template-decoupled-reduced-operator-method-for-SP} as a template method because we have not yet specified the implementations for the local computations in Lines~\ref{line:find-small-gradient-norm-SP-x} and~\ref{line:find-small-gradient-norm-SP-y}.
Provided that the inner local solvers in Lines~\ref{line:find-small-gradient-norm-SP-x} and~\ref{line:find-small-gradient-norm-SP-y} are standard gradient-based solvers, the template $\DMsp$ procedure formally qualifies as a distributed method.

\begin{theorem}
\label[theorem]{thm:DROMsp-communication-complexity}
    Consider the $\DMsp$ template applied to $\sP _{\text{\upshape SP}}$, assuming its local trajectories satisfy \Cref{def:distributed-gradient-based}.
    With the parameter choices of $\alpha_x = \frac {L_{xy} D_y} {D_x}$, $\alpha_y = \frac {L_{xy} D_x} {D_y}$, and $\lambda_t \equiv \lambda = 2$, we have:
    \[
    T ^{\DMsp} _{\sP_{\text{\upshape SP}}} \le 2 + 4 \frac{L_{xy} D_x D_y}{\epsilon} .
    \]
\end{theorem}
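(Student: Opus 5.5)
The plan is to view \DMsp as an instance of the reduced-operator (extragradient-type) scheme in the assembled norm~\eqref{eq:assembled-norm-SP}. The argument has three parts: a potential-function inequality that bounds a weighted sum of $\langle V_\psi(\bfz^{t+1}), \bfz^{t+1}-\bfu\rangle$; a uniform lower bound $a_{t+1}\ge 1/\lambda$ on the step sizes, coming from the local inexactness criteria; and a standard conversion of the weighted sum into a duality-gap bound. Since each iteration uses exactly two communication rounds (Lines~\ref{line:exchange-variables} and~\ref{line:compute-extragradient}), an iteration bound of the form $k\ge 2L_{xy}D_xD_y/\epsilon$ yields the claimed $T\le 2+4L_{xy}D_xD_y/\epsilon$ after rounding.

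\textbf{Step 1 (potential inequality).} Write $\bfg_{t+1}=V_\psi(\bfz^{t+1})$. For any $\bfu\in Q$, the optimality condition of the prox step in Line~\ref{line:reduced-gradient-step-sp} gives
\[
\tfrac12\|\bfv^{t+1}-\bfu\|_\cE^2\le \tfrac12\|\bfv^t-\bfu\|_\cE^2-a_{t+1}\langle \bfg_{t+1},\bfz^{t+1}-\bfu\rangle-\tfrac12\bigl(2a_{t+1}\langle \bfg_{t+1},\bfv^t-\bfz^{t+1}\rangle-a_{t+1}^2\|\bfg_{t+1}\|_{\cE^*}^2\bigr).
\]
With the choice of $a_{t+1}$ in Line~\ref{line:compute-reduced-stepsize}, the last bracket vanishes. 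Telescoping over $t=0,\dots,k-1$ gives
\[
\sum_{i=1}^k a_i\langle \bfg_i,\bfz^i-\bfu\rangle\le \tfrac12\|\bfz^0-\bfu\|_\cE^2\le \tfrac12(\alpha_xD_x^2+\alpha_yD_y^2)=L_{xy}D_xD_y
\]
for every $\bfu\in\cB\cap Q$, using the chosen $\alpha_x,\alpha_y$.

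Next, convexity of $f(\cdot,\bfy)+\psi_x$ and concavity of $f(\bfx,\cdot)-\psi_y$, applied with the subgradients $\psi'_x(\bfx^{i}),\psi'_y(\bfy^{i})$ that appear in $\bfg_i$, give $F(\bfx^i,\bfy)-F(\bfx,\bfy^i)\le\langle \bfg_i,\bfz^i-\bfu\rangle$. Jensen's inequality for the ergodic average $\bar\bfz^k$ then yields $\Delta(\bar\bfz^k)\le L_{xy}D_xD_y/\sum_{i\le k}a_i$. If some $\bfg_{t+1}=0$, then $\bfz^{t+1}$ is already a saddle point, so that case can be treated trivially.

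\textbf{Step 2 (lower bound on $a_{t+1}$; the main step).} Set $\bfd=\bfv^t-\bfz^{t+1}$. From Line~\ref{line:find-small-gradient-norm-SP-x} we can write
\[
(\bfg_{t+1})_x=\alpha_x\lambda\bfP_x\bfd_x+\bfr_x+\bigl[\nabla_xf(\bfx^{t+1},\bfy^{t+1})-\nabla_xf(\bfx^{t+1},\bfv^t_y)\bigr],
\]
and symmetrically for the $y$-block. Here $\|\bfr_x\|_{x^*}\le\frac{\alpha_x\lambda}{2}\|\bfd_x\|_x$, and the bracket has norm at most $L_{xy}\|\bfd_y\|_y$ by~\ref{item:function-smooth}.

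We measure these terms in $\|\cdot\|_{\cE^*}$, where the $x$-block is weighted by $1/\alpha_x$. The residual part $\bfr=(\bfr_x,\bfr_y)$ then has norm at most $\frac\lambda2\|\bfd\|_\cE$. For the cross part, $\alpha_x\alpha_y=L_{xy}^2$ gives $L_{xy}^2\|\bfd_y\|_y^2/\alpha_x=\alpha_y\|\bfd_y\|_y^2$, so the cross part has norm at most $\|\bfd\|_\cE$. Hence $\bfg_{t+1}=\lambda\bfP\bfd+\bfe$ with $\|\bfe\|_{\cE^*}\le(\frac\lambda2+1)\|\bfd\|_\cE=\lambda\|\bfd\|_\cE$ when $\lambda=2$.

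Squaring $\|\bfg_{t+1}-\lambda\bfP\bfd\|_{\cE^*}\le\lambda\|\bfd\|_\cE$ gives $2\lambda\langle\bfg_{t+1},\bfd\rangle\ge\|\bfg_{t+1}\|_{\cE^*}^2$, and therefore $a_{t+1}\ge 1/\lambda=1/2$. This is the delicate point: with $\lambda=2$ the error constant exactly matches $\lambda$. The bound survives only because the squared form needs $\beta\le\lambda$, not $\beta<\lambda$, so the triangle-inequality constants must be tracked sharply. If this fails, I would fall back on a crude $a_{t+1}\ge 1/4$, which the stated constant $4$ may also accommodate.

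\textbf{Step 3 (count).} Steps 1 and 2 give $\Delta(\bar\bfz^k)\le 2L_{xy}D_xD_y/k$. So $k=\lceil 2L_{xy}D_xD_y/\epsilon\rceil$ iterations suffice. Since each iteration costs two communication rounds, the total is at most $2+4L_{xy}D_xD_y/\epsilon$ rounds. The local trajectories are assumed to satisfy \Cref{def:distributed-gradient-based}, so the method is a distributed method and this count is exactly $T^{\DMsp}_{\sP_{\text{\upshape SP}}}$.
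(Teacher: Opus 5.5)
Your proposal is correct and follows the paper's route. It uses the three-point inequality for the prox step with $a_{t+1}$ chosen to cancel the error term, together with $a_{t+1}\ge 1/\lambda$ (\cref{lemma:Reduced-gradient-method-for-VIPs}). It then verifies the Monteiro--Svaiter condition from the local relative-accuracy criteria plus the $L_{xy}$ cross-Lipschitz bound (\cref{lemma:FDS-correctness}); there the paper combines the two error pieces blockwise via $(a+b)^2\le 2a^2+2b^2$ rather than by your triangle inequality in $\|\cdot\|_{\cE^*}$, but both give the same threshold $\lambda\ge 2L_{xy}/\sqrt{\alpha_x\alpha_y}=2$. Finally it applies the ergodic gap bound with two rounds per iteration, as in \cref{lemma:DROMsp-communication-complexity}.

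One small correction: your fallback $a_{t+1}\ge 1/4$ would only give $2+8L_{xy}D_xD_y/\epsilon$, not the stated constant. It is not needed, since your sharp derivation of $a_{t+1}\ge 1/2$ is valid.
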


\begin{remark}[Communication improvement]
    \Cref{thm:DROMsp-communication-complexity} shows that the communication cost of \DMsp depends only on the cross-coupled conditioning $L_{xy} D_x D_y$, independent of the diagonal conditioning.
    In contrast, none of the existing methods reviewed in \cref{sec:existing-algorithms-for-SP} has achieved this sharp communication guarantee. Specifically, the communication cost of the \EG baseline is suboptimal due to its dependence on the diagonal conditioning. While advanced frameworks like Cat-Cat-DAGDA successfully isolate the communication cost from the diagonal conditioning, they suffer from highly complicated nested-loop designs and introduce poly-logarithmic overheads.
    Therefore, our \DMsp communication protocol represents a clear improvement over existing methods.
    Furthermore, as shown later in \cref{sec:lower-complexity-bounds-for-distributed-SP}, our $\cO \bigl( \frac {L_{xy} D_x D_y} \epsilon \bigr)$ communication cost is minimax optimal within the family of distributed gradient-span algorithms.
\end{remark}

\begin{remark}[Robustness to inexact distance estimates]\label{remark:robustness-to-inexact-distance-estimates}
    Let us consider a practical scenario where the algorithm may not have the precise values of $D_x$ and $D_y$ in advance, but it has access to upper estimates $\hat D_x \ge D_x$ and $\hat D_y \ge D_y$.
    Let
    \(
    \boxed{
        \theta \triangleq \frac{D_x \hat D_y} {\hat D_x D_y} + \frac{D_y \hat D_x} {\hat D_y D_x}
    },
    \)
    which quantifies the disproportionality between the true distance parameters and their estimates.
    Note that $\theta \ge 2$ with equality if and only if the estimates are proportional, i.e., $\hat D_x / D_x = \hat D_y / D_y$.
    Now, consider the $\DMsp$ template applied to $\sP_{\text{\upshape SP}}$.
    With the parameter choices of $\alpha_x = \frac {L_{xy} \hat D_y} {\hat D_x}$, $\alpha_y = \frac {L_{xy} \hat D_x} {\hat D_y}$, and $\lambda_t \equiv \lambda = 2$, we have:
    \[
    T ^{\DMsp} _{\sP_{\text{\upshape SP}}} \le 2 + 2 \theta \frac{L_{xy} D_x D_y}{\epsilon} .
    \]
    In particular, $\theta$ provides a \emph{scale-invariant} robustness compared to existing accelerated frameworks. As shown in \cref{tab:communication-complexity}, the communication complexities of Cat-EG and Cat-Cat-DAGDA scale directly with the product of the estimates, $\hat D_x \hat D_y$. Consequently, if both agents conservatively overestimate their domain sizes by a uniform factor $c \gg 1$ (i.e., $\hat D_x = c D_x$ and $\hat D_y = c D_y$), the communication cost of Catalyst-based methods inflates by a massive factor of $c^2$. For \DMsp, however, this uniform overestimation perfectly cancels out, yielding $\theta = 2$.
\end{remark}

\section{Novel multi-stage reduction and its building components}
\label{sec:multi-stage-reduction-for-distributed-SPs}

In this section, we reveal the technical components of our \DMsp, which is built upon a novel multi-stage reduction. We first leverage the Reduced-Operator Method to reduce the problem to a Monteiro-Svaiter Subproblem~(MSS). Then and crucially, we show that when this subproblem is weakly coupled, it can be solved by a Fully Decoupled Solver in one communication round. Consequently, the problem is further reduced to coordinate-wise Minimization of Residual Norms~(MRNs). Finally, by exploiting the strong maximal monotonicity, the agents can apply existing accelerated methods to solve the MRNs to desired accuracy.

\subsection{Preliminary: Variational inequality problems}
\label{sec:Preliminary-composite-VIPs}

Let us first introduce the general notion of composite variational inequality problem~(VIP) as the backbone of our problems.

For any operator $V ( \cdot ) \colon \dom \psi \to \cE^*$ and any function $\psi (\cdot) \colon \cE \to \R \cup \{ +\infty \}$, we say that $\bfz^* \in \cE$ is a (strong) \emph{solution} of the VIP of $( V, \psi )$ if
\begin{equation}
\label{eq:composite-VIP}
\InnerProduct {V ( \bfz^* )}{\bfz - \bfz^*} + \psi  ( \bfz ) \ge \psi  ( \bfz^* ) , \text{ for all } \bfz \in \dom \psi .
\end{equation}

\paragraph{Assumption for VIPs.}

Let us introduce the following assumption:
\begin{enumerate}[label={\textnormal{(A\arabic*')}}, nosep, leftmargin=*]
    \item \label{item:monotone-operator}
    The function $\psi$ is a (simple) proper closed convex function. The operator $V$ is continuous and monotone over $\dom \psi$: that is,
    \(
    \InnerProduct {V(\bfz^\prime) - V(\bfz) } {\bfz^\prime - \bfz} \ge 0, \text{ for all } \bfz^\prime, \bfz \in \dom \psi .
    \)
    \setcounter{VIP-assumptions}{\value{enumi}}
\end{enumerate}

There is another notion of a \emph{weak} solution with the alternative formulation:
\begin{equation*}
    \InnerProduct {V ( \bfz )}{\bfz - \bfz^*} + \psi  ( \bfz ) \ge \psi  ( \bfz^* ) , \text{ for all } \bfz \in \dom \psi .
\end{equation*}
Under Assumption~\ref{item:monotone-operator}, the weak and strong solutions are equivalent. We refer the reader to standard texts such as \cite{nemirovski2004prox,nesterov2023high} for a formal discussion of these two formulations.
Moreover, under \labelcref{item:monotone-operator}, a point  $\bfz^* \in \cE$ is the solution of \eqref{eq:composite-VIP} if and only if $\0 \in V (\bfz^*) + \partial \psi (\bfz^*)$.

Indeed, associated with the saddle problem given by $(f, \psi_x, \psi_y)$, let us consider the VIP given by $(V^f, \psi_z)$, where
\begin{equation}
\label{eq:VIP-arising-from-SP}
V^f(\bfz) \equiv (\nabla _x f (\bfz), - \nabla _y f (\bfz))
\text{ and }
\psi _z (\bfz) \equiv \psi_x (\bfx) + \psi_y (\bfy) , \text{ for all } \bfz \in Q .
\end{equation}
The associated VIP satisfies \labelcref{item:monotone-operator} whenever the saddle problem satisfies \labelcref{item:continuous-and-convex-concave}. Hence, the solution of the VIP coincides with the saddle point~\citep{nemirovski2004prox,nesterov2023high}.

\subsection{Reduced-operator method for VIPs}
\label{sec:reduced-operator-method}

Now, we introduce the Reduced-Operator Method~(\ROM) recently proposed in \cite{nesterov2023high}. In particular, we apply Nesterov's general framework in a special way so as to reduce the VIP to a sequence of {M}onteiro-{S}vaiter {S}ubproblems~(MSSs)~\citep{monteiro2013accelerated}.

The MSS asks to find a point for the regularized function such that the subgradient norm at this point is small compared to the distance from the initial point.
Let us now define the MSS formally.

\begin{mdframed}[skipabove=1mm]
\textbf{Monteiro-Svaiter Subproblem.}
Given an operator $V \colon \dom \psi \to \cE^*$, a function $\psi \colon \cE \to \R \cup \{ +\infty \}$, a reference point $\bfv \in \dom \psi$, and a real number $\lambda > 0$, we say $(\bfz^+, \psi^\prime (\bfz^+))$ is a \emph{solution} of the MSS if $\bfz^+ \in \dom \psi$, $\psi^\prime (\bfz^+) \in \partial \psi (\bfz^+)$, and
\begin{equation}
\label{eq:stopping-criterion-in-hybrid-projection}
    \lVert {V (\bfz^+) + \psi^\prime (\bfz^+) + \lambda \bfP (\bfz^+ - \bfv)} \rVert _{\cE^*} \le \lambda \lVert \bfz^+ - \bfv \rVert _{\cE} .
\end{equation}
\end{mdframed}

We will discuss how to solve the MSSs later in \cref{sec:ms-subproblem-with-weak-couplings}.
But for now, let us assume there exists a solver
\[
\cM^{\MS}(V, \psi, \bfv, \lambda)
\]
for the MSSs, which takes an MSS given by $(V, \psi, \bfv, \lambda)$ and returns a solution of it.
Built upon such a solver $\cM^{\MS}$, we now introduce \ROM in \cref{alg:Reduced-operator-method-for-VIPS}. At each iteration $t$: the solver $\cM ^{\MS}$ returns a solution $( \bfz^{t+1}, \psi^\prime (\bfz^{t+1}) )$ for the MSS built at reference point $\bfv^t$; this solution is used as a midpoint to compute subgradient $V_\psi (\bfz^{t+1})$; then the `extragradient-type' step is taken with the stepsize $a_{t+1}$.

\setlength{\textfloatsep}{0.2cm}
\begin{algorithm}
\caption{$\ROM _{\Norm{ \cdot } _{\cE} } ( V, \psi, \bfz^0, (\lambda_{t})_{t \ge 1}  \mid \cM^{\MS})$}
\label{alg:Reduced-operator-method-for-VIPS}
\begin{algorithmic}[1]
    \Require A solver $\cM^{\MS}$ for the MSSs.
    \State $\bfv^0 = \bfz^0$.

    \For { $t = 0, 1, \cdots$ }
        \State \label{line:Monteiro-Svaiter-subproblem}$( \bfz^{t+1}, \psi^\prime (\bfz^{t+1}) ) = \cM^{\MS} ( V, \psi, \bfv^t, \lambda_{t+1} )$.
        \State $V_\psi (\bfz^{t+1}) = V (\bfz^{t+1}) + \psi^\prime (\bfz^{t+1})$.
        \State \label{line:stepsize-a}$a_{t+1} = \frac { 2 \InnerProduct {V_\psi (\bfz^{t+1})}{ \bfv^t - \bfz^{t+1} } } { \lVert {V_\psi (\bfz^{t+1})} \rVert _{\cE^*} ^2 }$.
        \State \label{line:reduced-gradient-step}$\bfv^{t+1} = \argmin _{\bfv \in \dom \psi} \bigl[ a_{t+1} \InnerProduct {V _\psi (\bfz^{t+1})}{\bfv} + \frac 1 {2} \lVert \bfv - \bfv^t \rVert _{\cE} ^2 \bigr]$.
    \EndFor

\end{algorithmic}
\end{algorithm}
\setlength{\floatsep}{0.2cm}

Next, let us show the convergence of \ROM.
\begin{lemma}
\label[lemma]{lemma:Reduced-gradient-method-for-VIPs}
    \ROM~(\cref{alg:Reduced-operator-method-for-VIPS}) ensures for all $\bfv \in \dom \psi$ and for all $T \ge 1$,
    \[
    \sum _{t=0} ^{T-1} a_{t+1} \InnerProduct {V _\psi ( \bfz^{t+1} ) }{ \bfz^{t+1} - \bfv }
    \le \frac 1 2 \lVert \bfv^0 - \bfv \rVert _\cE ^2 - \frac 1 2 \lVert \bfv^T - \bfv \rVert _\cE ^2 .
    \]
    Moreover, we have $a_{t+1} \ge \frac 1 {\lambda_{t+1}}$, for all $t \ge 0$.
\end{lemma}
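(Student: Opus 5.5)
The plan is to prove the two claims separately: first the lower bound on the stepsize, then the telescoping inequality. Throughout, abbreviate $g_{t+1} \triangleq V_\psi(\bfz^{t+1})$, $r \triangleq \bfz^{t+1}-\bfv^t$ and $\lambda \triangleq \lambda_{t+1}$. Norms are those of the assembled space $\cE$, with $\Norm{\cdot}_{\cE^*}$ the dual norm induced by $\bfP^{-1}$.

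\textbf{Stepsize bound.} Square the MSS condition \eqref{eq:stopping-criterion-in-hybrid-projection} and expand:
\[
\Norm{g_{t+1}}_{\cE^*}^2 + 2\lambda\InnerProduct{g_{t+1}}{r} + \lambda^2\Norm{r}_\cE^2 \le \lambda^2 \Norm{r}_\cE^2 .
\]
This uses $\Norm{\bfP r}_{\cE^*} = \Norm{r}_\cE$ and $\InnerProduct{g}{\bfP^{-1}\bfP r} = \InnerProduct{g}{r}$. Cancelling the $\lambda^2$ terms gives $2\lambda\InnerProduct{g_{t+1}}{\bfv^t-\bfz^{t+1}} \ge \Norm{g_{t+1}}_{\cE^*}^2$, which is exactly $a_{t+1}\ge 1/\lambda_{t+1}$. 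Two side facts follow. First, $a_{t+1}>0$. Second, $a_{t+1}$ is well defined whenever $g_{t+1}\neq 0$; if $g_{t+1}=0$, then $\bfz^{t+1}$ already solves the VIP and the process stops, so I will remark on this degenerate case and set it aside.

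\textbf{Telescoping inequality.} The update in Line~\ref{line:reduced-gradient-step} is a prox step over $\dom\psi$, whose objective is $1$-strongly convex in $\Norm{\cdot}_\cE$. Its optimality condition gives the standard three-point inequality: for all $\bfv\in\dom\psi$,
\[
a_{t+1}\InnerProduct{g_{t+1}}{\bfv^{t+1}-\bfv} \le \tfrac12\Norm{\bfv^t-\bfv}^2-\tfrac12\Norm{\bfv^{t+1}-\bfv}^2-\tfrac12\Norm{\bfv^{t+1}-\bfv^t}^2 .
\]
Write $\InnerProduct{g}{\bfz^{t+1}-\bfv} = \InnerProduct{g}{\bfv^{t+1}-\bfv} + \InnerProduct{g}{\bfz^{t+1}-\bfv^{t+1}}$. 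It then suffices to show
\[
a_{t+1}\InnerProduct{g_{t+1}}{\bfz^{t+1}-\bfv^{t+1}} \le \tfrac12\Norm{\bfv^{t+1}-\bfv^t}^2 .
\]
Set $u=\bfv^{t+1}-\bfv^t$ and split $\bfz^{t+1}-\bfv^{t+1} = -r - u$. The left side becomes $-a\InnerProduct{g}{r} - a\InnerProduct{g}{u}$. By the definition of $a$, we have $-a\InnerProduct{g}{r} = \tfrac12 a^2\Norm{g}_*^2$. By Young's inequality, $-a\InnerProduct{g}{u} \le \tfrac12 a^2\Norm{g}_*^2 + \tfrac12\Norm{u}^2$. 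Adding these gives $a^2\Norm{g}_*^2 + \tfrac12\Norm{u}^2$, which is too large by $a^2\Norm{g}_*^2$.

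This sign bookkeeping is the step I expect to be the main obstacle, and I will recheck it carefully. Rechecking: $\bfz^{t+1}-\bfv^{t+1} = r - u$, not $-r-u$, because $\bfz^{t+1}-\bfv^t = r$. So the left side is $a\InnerProduct{g}{r} - a\InnerProduct{g}{u}$. Since $a\InnerProduct{g}{r} = -\tfrac12 a^2\Norm{g}_*^2$, Young's inequality for the second term gives a total of at most $-\tfrac12 a^2\Norm{g}_*^2 + \tfrac12 a^2\Norm{g}_*^2 + \tfrac12\Norm{u}^2 = \tfrac12\Norm{u}^2$, exactly as needed. The definition of $a_{t+1}$ is precisely what makes the quadratic terms cancel.

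Summing the per-step inequality over $t=0,\dots,T-1$ telescopes the distances and yields the claimed bound. Monotonicity of $V$ is not needed for this lemma.
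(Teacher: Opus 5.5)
Your proof is correct and follows essentially the same route as the paper. For the stepsize bound you expand the squared MSS condition; the paper writes the same computation as the identity $\langle g,\bfv^{t}-\bfz^{t+1}\rangle-\frac{1}{2\lambda}\|g\|_{\cE^*}^2=\frac{\lambda}{2}\|r\|_{\cE}^2-\frac{1}{2\lambda}\|g+\lambda\bfP r\|_{\cE^*}^2\ge 0$. For the telescoping bound you use, as the paper does, the three-point inequality for the prox step, Young's inequality on $a_{t+1}\langle g,\bfv^{t+1}-\bfv^{t}\rangle$, and the exact cancellation $a_{t+1}\langle g,\bfv^{t}-\bfz^{t+1}\rangle=\frac{a_{t+1}^2}{2}\|g\|_{\cE^*}^2$ given by the definition of $a_{t+1}$. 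Your corrected bookkeeping $\bfz^{t+1}-\bfv^{t+1}=r-u$ is right, so drop the first, mis-signed attempt from the write-up.
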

\begin{proof}
    By the optimality of $\bfv ^{t+1}$, we have for all $\bfv \in \dom \psi$,
    \[
    a_{t+1} \InnerProduct {V_\psi ( \bfz^{t+1} ) }{ \bfv - \bfv^{t+1} } + \frac 1 2 \lVert \bfv^t - \bfv \rVert _\cE ^2
    \ge \frac 1 2 \lVert {\bfv^{t+1} - \bfv} \rVert _\cE ^2 + \frac 1 2 \lVert {\bfv^t - \bfv^{t+1}} \rVert _\cE ^2 ,
    \]
    and therefore,
    \[
    \begin{aligned}
        &a_{t+1} \InnerProduct {V _\psi ( \bfz^{t+1} )}{\bfv - \bfz^{t+1} } + \frac 1 2 \lVert \bfv^t - \bfv \rVert _\cE ^2 \\
        &\quad \ge a_{t+1} \InnerProduct {V _\psi (\bfz^{t+1})}{ \bfv^{t+1} - \bfz^{t+1} } + \frac 1 2 \Norm {\bfv^{t+1} - \bfv} _\cE ^2 + \frac 1 2 \Norm {\bfv^t - \bfv^{t+1}} _\cE ^2 \\
        &\quad = a_{t+1} \InnerProduct {V _\psi (\bfz^{t+1})}{ \bfv^{t} - \bfz^{t+1} } + \frac 1 2 \Norm {\bfv^{t+1} - \bfv} _\cE ^2 + a_{t+1} \InnerProduct {V _\psi (\bfz^{t+1})}{ \bfv^{t+1} - \bfv^{t} } + \frac 1 2 \Norm {\bfv^t - \bfv^{t+1}} _\cE ^2 \\
        &\quad \ge a_{t+1} \InnerProduct {V _\psi (\bfz^{t+1})}{ \bfv^{t} - \bfz^{t+1} } + \frac 1 2 \Norm {\bfv^{t+1} - \bfv} _\cE ^2 - \frac {a_{t+1}^2} {2} \Norm { V_\psi (\bfz^{t+1}) } _{\cE^*} ^2 \\
        &\quad \ge \frac 1 2 \Norm {\bfv^{t+1} - \bfv} _\cE ^2 ,
    \end{aligned}
    \]
    where the last inequality follows from the definition of $a_{t+1}$ in Line~\ref{line:stepsize-a} of \cref{alg:Reduced-operator-method-for-VIPS}.
    Then, the desired bound follows from summing the above inequality over $t$ from $0$ to $T-1$.

    Next, we show the lower bound for $a_{t}$.
    For all $t \ge 1$,
    we have
    \[
    \begin{aligned}
    &\quad \InnerProduct {V_\psi (\bfz^{t})}{ \bfv^{t-1} - \bfz^{t} } - \frac 1 {2 \lambda_{t}} \Norm {V_\psi (\bfz^{t})} _{\cE^*} ^2 \\
    &\equiv \frac {\lambda_{t}} {2} \Norm {\bfz^{t} - \bfv^{t-1}} _{\cE} ^2 - \frac {1} {2 \lambda_{t}} \Norm { V _\psi (\bfz^{t}) + \lambda_{t} \bfP (\bfz^{t} - \bfv^{t-1})} _{\cE^*} ^2 \\
    &\ge 0 ,
    \end{aligned}
    \]
    where the last inequality follows from \cref{eq:stopping-criterion-in-hybrid-projection}.
    Therefore, we have
    \[
    a_{t} = \frac { 2 \InnerProduct {V_\psi (\bfz^{t})}{ \bfv^t - \bfz^{t} } } { \Norm {V_\psi (\bfz^{t})} _{\cE^*} ^2 } \ge \frac {1} {\lambda_{t}} .
    \]
\end{proof}

\subsection{Fully decoupled solver for MSSs with weak couplings}
\label{sec:ms-subproblem-with-weak-couplings}

We now address the MSS introduced by the ROM in \cref{sec:reduced-operator-method}. Specifically, we are to deal with the MSS given by
\begin{equation}
\label{eq:MS-subproblem-from-SPs}
(V^f, \psi_z, \bfv, \lambda) ,
\end{equation}
where
$V^f$ and $\psi_z$ are defined in \cref{eq:VIP-arising-from-SP},
the reference point $\bfv = (\bfv_x, \bfv_y) \in \dom \psi_x \times \dom \psi_y$,
and the assembled norm $\Norm {\cdot} _{\cE}$ is associated with parameters $\alpha_x$ and $\alpha_y$.

We say that an MSS has a \emph{weak coupling} if $\lambda \ge \boxed {2 \bar L_{\texttt{c}} \triangleq \frac{2 L_{xy}} {\sqrt{\alpha_x \alpha_y}}}.$
In this section, we will introduce a {F}ully {D}ecoupled {S}olver~(\FDS), which reduces the weakly-coupled MSSs to coordinate-wise Minimization of Residual Norms~(MRNs).

Let us first define the problem of MRN.
The MRN asks to find an approximate solution $\bfw^+$ of the VIP of $(V_w, \psi_w)$ such that the residual norm is small compared to the distance from the initial point:
\begin{mdframed}[skipabove=1mm]
\textbf{Minimization of residual norm.}
Given an operator $V _w \colon \dom \psi _w \to
\cE_w^*$, a function $\psi _w \colon \cE_w \to \R \cup \{ +\infty \}$, a reference point $\bfv_w \in \dom \psi _w$, and an accuracy $\delta > 0$, we say $(\bfw^+, \psi _w ^\prime (\bfw^+))$ minimizes the residual norm to \emph{$\delta$-relative distance accuracy}, if $\bfw^+ \in \dom \psi _w$, $\psi _w ^\prime (\bfw^+) \in \partial \psi _w (\bfw^+)$, and
\[
\lVert V _w (\bfw^+) + \psi _w ^\prime (\bfw^+) \rVert _{w^*} \le \delta \lVert  \bfw^+ - \bfv_{w} \rVert _{w} .
\]
\end{mdframed}

Let us, again, defer the discussion of solving MRNs to \cref{sec:minimizing-residual-norm}. But for now, let us assume there exist solvers
\[
\cM_x ^{\MRN}(V_x, \hat \psi_x, \bfv_x, \delta_x) \quad \text{and} \quad \cM_y ^{\MRN}(V_y, \hat \psi_y, \bfv_y, \delta_y)
\]
for the coordinate-wise MRNs in spaces $\cE_x$ and $\cE_y$. In particular, these solvers take a coordinate-wise MRN problem and return a point and a subgradient satisfying the desired accuracy.

A crucial step in our analysis relies on the Fully Decoupled Solver (\FDS, \cref{alg:fully-decoupled-solver}), which optimizes each decision variable independently. For an MSS with weak coupling, we establish in \cref{lemma:FDS-correctness} that \FDS returns a correct solution \emph{in a single round}.

\begin{algorithm}[htb]
\caption{$\FDS _{\Norm {\cdot}_{\cE}} ( (\nabla _x f, - \nabla _y f), (\psi_x, \psi_y), \bfv, \lambda \mid (\cM_x ^{\MRN}, \cM_y ^{\MRN}) )$}
\label{alg:fully-decoupled-solver}
\begin{algorithmic}[1]
\Require Solvers $\cM_x ^{\MRN}$ and $\cM_y ^{\MRN}$ for the coordinate-wise MRNs.
        \State $\hat \psi_x = \psi_x + \frac{\alpha_x \lambda}{2} \Norm {\cdot - \bfv_x} _x ^2$ and $\hat \psi_y = \psi_y + \frac{\alpha_y \lambda}{2} \Norm {\cdot - \bfv_y} _y ^2$.
        \State \label{line:coordinate-MRNs-in-FDS}Agent~$x$ and Agent~$y$ respectively compute
            \[
            \begin{gathered}
            (\bfx^+, \hat \psi^\prime _x (\bfx^+)) = \cM_x ^{\MRN} \bigl(
                \nabla _x f (\cdot, \bfv_y),
                \hat \psi_x,
                \bfv_x,
                \delta_x \bigr) \text{ and } \\
            (\bfy^+, \hat \psi^\prime _y (\bfy^+)) = \cM_y ^{\MRN} \bigl(
                - \nabla _y f (\bfv_x, \cdot),
                \hat \psi_y,
                \bfv_y,
                \delta_y \bigr) ,
            \end{gathered}
            \]
            where $\delta_x = \frac {\alpha_x \lambda} {2}$ and $\delta_y = \frac {\alpha_y \lambda} {2}$.
    \State $\psi^\prime_x (\bfx^+) = \hat \psi^\prime_x (\bfx^+) - \alpha_x \lambda \bfP_x (\bfx^+ - \bfv_x)$ and $\psi^\prime_y (\bfy^+) = \hat \psi^\prime_y (\bfy^+) - \alpha_y \lambda \bfP_y (\bfy^+ - \bfv_y)$.
    \State \Return $(\bfz^+, \psi ^\prime (\bfz^+) )$, where $ \bfz^+ = (\bfx^+, \bfy^+)$ and $\psi ^\prime (\bfz^+) = (\psi ^\prime _x (\bfx^+), \psi ^\prime _y (\bfy^+))$.
\end{algorithmic}
\end{algorithm}

\begin{lemma}
\label[lemma]{lemma:FDS-correctness}
    Consider the saddle problem given by $(f, \psi_x, \psi_y)$ which satisfies \labelcref{item:function-smooth}.
    For $\lambda \ge 2 \bar L_{\texttt{c}}$, \FDS~(\cref{alg:fully-decoupled-solver}) returns a solution of the MSS given in \cref{eq:MS-subproblem-from-SPs}.
\end{lemma}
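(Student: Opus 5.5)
We verify the MSS condition \eqref{eq:stopping-criterion-in-hybrid-projection} directly from the two coordinate-wise MRN guarantees and the Lipschitz bound \labelcref{item:function-smooth}.

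\emph{Unfolding the MRN guarantees.} First, $\psi_x'(\bfx^+)\in\partial\psi_x(\bfx^+)$, because $\partial\hat\psi_x(\bfx^+)=\partial\psi_x(\bfx^+)+\alpha_x\lambda\bfP_x(\bfx^+-\bfv_x)$. The same holds for $y$. Next, write
\[
r_x \triangleq \nabla_x f(\bfx^+,\bfv_y)+\psi_x'(\bfx^+)+\alpha_x\lambda\bfP_x(\bfx^+-\bfv_x).
\]
The MRN output then gives $\Norm{r_x}_{x^*}\le\frac{\alpha_x\lambda}{2}\Norm{\bfx^+-\bfv_x}_x$, and symmetrically $\Norm{r_y}_{y^*}\le\frac{\alpha_y\lambda}{2}\Norm{\bfy^+-\bfv_y}_y$.

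\emph{Decomposing the MSS residual.} The $x$-block of $V^f(\bfz^+)+\psi'(\bfz^+)+\lambda\bfP(\bfz^+-\bfv)$ is
\[
r_x+e_x, \qquad e_x\triangleq\nabla_x f(\bfx^+,\bfy^+)-\nabla_x f(\bfx^+,\bfv_y).
\]
By \labelcref{item:function-smooth}, $\Norm{e_x}_{x^*}\le L_{xy}\Norm{\bfy^+-\bfv_y}_y$. Similarly the $y$-block is $r_y+e_y$ with $\Norm{e_y}_{y^*}\le L_{xy}\Norm{\bfx^+-\bfv_x}_x$. Since $\bfP^{-1}$ is block diagonal, the dual norm of the assembled norm satisfies
\[
\Norm{(g_x,g_y)}_{\cE^*}^2=\alpha_x^{-1}\Norm{g_x}_{x^*}^2+\alpha_y^{-1}\Norm{g_y}_{y^*}^2 .
\]

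\emph{Bounding the two parts in $\cE$-norm.} Set $u=\sqrt{\alpha_x}\Norm{\bfx^+-\bfv_x}_x$ and $w=\sqrt{\alpha_y}\Norm{\bfy^+-\bfv_y}_y$, so that $\Norm{\bfz^+-\bfv}_\cE=\sqrt{u^2+w^2}$. For the MRN residuals,
\[
\Norm{(r_x,r_y)}_{\cE^*}^2\le\frac{\lambda^2}{4}(\alpha_x\Norm{\bfx^+-\bfv_x}_x^2+\alpha_y\Norm{\bfy^+-\bfv_y}_y^2),
\]
so $\Norm{(r_x,r_y)}_{\cE^*}\le\frac{\lambda}{2}\Norm{\bfz^+-\bfv}_\cE$. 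For the coupling errors, $\alpha_x^{-1}\Norm{e_x}^2\le\frac{L_{xy}^2}{\alpha_x\alpha_y}w^2$ and $\alpha_y^{-1}\Norm{e_y}^2\le\frac{L_{xy}^2}{\alpha_x\alpha_y}u^2$. Hence $\Norm{(e_x,e_y)}_{\cE^*}\le\bar L_{\texttt{c}}\Norm{\bfz^+-\bfv}_\cE\le\frac\lambda2\Norm{\bfz^+-\bfv}_\cE$, using $\lambda\ge2\bar L_{\texttt{c}}$. The triangle inequality now gives the bound $\lambda\Norm{\bfz^+-\bfv}_\cE$, which is \eqref{eq:stopping-criterion-in-hybrid-projection}.

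The main step to get right is this cross-term bookkeeping. The $x$-error is controlled by the $y$-displacement, and the weights $\alpha_x^{-1}$ and $\alpha_y$ combine to exactly $L_{xy}^2/(\alpha_x\alpha_y)=\bar L_{\texttt{c}}^2$. This symmetric pairing is why the threshold is $2\bar L_{\texttt{c}}$, and why a half share of the budget is left for the MRN inexactness $\delta=\alpha\lambda/2$.
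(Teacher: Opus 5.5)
Your proof is correct and follows the paper's argument, which proves the statement as the two-block case of \cref{lemma:FDS-correctness-VIP}. Both split each block of the MSS residual into the MRN residual plus the error from freezing the remote variable at $\bfv$, bound that error by $L_{xy}$ times the remote displacement via \labelcref{item:function-smooth}, and reassemble in the weighted dual norm so that the cross weights combine into $L_{xy}^2/(\alpha_x\alpha_y) = \bar L_{\texttt{c}}^2$. The only difference is cosmetic: you apply the triangle inequality to the assembled norm, getting $\frac{\lambda}{2} + \bar L_{\texttt{c}} \le \lambda$, whereas the paper applies it blockwise and then uses $(a+b)^2 \le 2a^2 + 2b^2$, getting $\frac{\lambda^2}{2} + 2 \bar L_{\texttt{c}}^2 \le \lambda^2$; both reduce to exactly $\lambda \ge 2 \bar L_{\texttt{c}}$.
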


The correctness of the \FDS for SPs can be implied as a direct consequence of the correctness of a more general version of \FDS for VIPs, which will be introduced later in \cref{sec:monotone-block-VIPs-appendix}. Therefore, let us defer this proof to \cref{lemma:FDS-correctness-VIP} in \cref{sec:monotone-block-VIPs-appendix}.

\subsection{Minimization of residual norms}
\label{sec:minimizing-residual-norm}

We arrive at the last building component, the Minimization of Residual Norms~(MRNs).

\paragraph{Assumptions for MRNs.}

Let us introduce the following assumptions:
\begin{enumerate}[label={\textnormal{(\^{A}\arabic*)}}, nosep, leftmargin=*]
    \item \label{item:a-monotone-operator-and-a-convex-function} The function $\psi _w$ is a (simple) proper closed convex function. The operator $V _w$ is monotone over $\dom \psi _w$.

    \item \label{item:strongly-maximally-monotone} 
    The set-valued operator $V _w + \partial \psi _w$ is $\mu$-strongly maximally monotone over $\dom \psi _w$.
    That is,
    \[
    \InnerProduct [\big] {V _w (\bfw^\prime) + \psi _w ^\prime (\bfw^\prime) - V _w (\bfw) - \psi _w ^\prime (\bfw)} {\bfw^\prime - \bfw} \ge \mu \Norm {\bfw^\prime - \bfw} _w ^2,
    \]
    for all $\bfw^\prime, \bfw \in \dom \psi _w$, $\psi^\prime _w (\bfw^\prime) \in \partial \psi _w (\bfw^\prime)$, and $\psi^\prime _w (\bfw) \in \partial \psi _w (\bfw)$,

    \item \label{item:operator-Lipschitz-continuity} The operator $V _w (\bfw)$ is $L$-Lipschitz continuous over $\bfw \in \dom \psi _w$.

    \item \label{item:operator-is-gradient-of-convex-function} The operator $V _w = \nabla f_w$, where $f_w$ is a continuously differentiable function defined on an open set containing $\dom \psi_w$.
\end{enumerate}

The theoretical guarantee provided in the literature is usually based on the distance-to-solution accuracy~(cf. \cref{def:distance-to-solution-accuracy}). We show in \cref{lemma:exact-accuracy-to-relative-accuracy-under-strong-monotonicity} that, under strong maximal monotonicity, the relative distance accuracy required in this paper can be implied from distance-to-solution accuracy.
\begin{definition}[Distance-to-solution accuracy]
\label[definition]{def:distance-to-solution-accuracy}
    We say that $(\bfw^+, \psi _w ^\prime (\bfw^+))$ satisfies \emph{$\xi$-distance-to-solution accuracy} if $\bfw^+ \in \dom \psi _w$, $\psi _w ^\prime (\bfw^+) \in \partial \psi _w (\bfw^+)$, and \( \lVert {V _w (\bfw^+) + \psi _w ^\prime (\bfw^+) } \rVert _{w^*}
    \le \xi \lVert {\bfv_{w} - \tilde \bfw} \rVert _{w} \) for some $\tilde \bfw$ in the solution set of the VIP of $(V _w, \psi _w)$.
\end{definition}

\begin{lemma}
\label[lemma]{lemma:exact-accuracy-to-relative-accuracy-under-strong-monotonicity}
    Consider the MRN problem given by $(V_w, \psi_w, \bfv_w, \delta)$ which satisfies \labelcref{item:strongly-maximally-monotone}.
    Let $\xi \le \frac{\mu\delta} {\mu+\delta}$.
    If $(\bfw^+, \psi _w ^\prime (\bfw^+))$ satisfies {$\xi$-distance-to-solution accuracy}, then $(\bfw^+, \psi _w ^\prime (\bfw^+))$ is a solution of the MRN problem.
\end{lemma}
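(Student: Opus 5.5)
Write $g \triangleq V_w(\bfw^+) + \psi_w^\prime(\bfw^+)$ and let $\tilde \bfw$ be the solution from \cref{def:distance-to-solution-accuracy}, so $\Norm{g}_{w^*} \le \xi \Norm{\bfv_w - \tilde \bfw}_w$. The plan is to bound $\Norm{\bfv_w - \tilde \bfw}_w$ by $\Norm{\bfw^+ - \bfv_w}_w$ plus a multiple of $\Norm{g}_{w^*}$, then absorb the residual term into the left-hand side.

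\textbf{Step 1: use strong monotonicity at the solution.} Since $\tilde \bfw$ solves the VIP of $(V_w, \psi_w)$, we have $\0 \in V_w(\tilde \bfw) + \partial \psi_w(\tilde \bfw)$. Hence there is $\psi_w^\prime(\tilde \bfw) \in \partial \psi_w(\tilde \bfw)$ with $V_w(\tilde \bfw) + \psi_w^\prime(\tilde \bfw) = \0$. Applying \labelcref{item:strongly-maximally-monotone} to the pairs $(\bfw^+, \psi_w^\prime(\bfw^+))$ and $(\tilde \bfw, \psi_w^\prime(\tilde \bfw))$ gives
\[
\mu \Norm{\bfw^+ - \tilde \bfw}_w^2 \le \InnerProduct{g}{\bfw^+ - \tilde \bfw} \le \Norm{g}_{w^*} \Norm{\bfw^+ - \tilde \bfw}_w ,
\]
and therefore $\Norm{\bfw^+ - \tilde \bfw}_w \le \Norm{g}_{w^*}/\mu$.

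\textbf{Step 2: triangle inequality and absorption.} The triangle inequality gives
\[
\Norm{g}_{w^*} \le \xi \bigl( \Norm{\bfw^+ - \bfv_w}_w + \Norm{\bfw^+ - \tilde \bfw}_w \bigr) \le \xi \Norm{\bfw^+ - \bfv_w}_w + \tfrac{\xi}{\mu} \Norm{g}_{w^*} .
\]
Since $\xi \le \frac{\mu\delta}{\mu+\delta} < \mu$, we have $1 - \xi/\mu > 0$. Rearranging yields
\[
\Norm{g}_{w^*} \le \frac{\xi\mu}{\mu - \xi} \Norm{\bfw^+ - \bfv_w}_w .
\]
The map $\xi \mapsto \frac{\xi\mu}{\mu-\xi}$ is increasing on $[0,\mu)$. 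Substituting $\xi = \frac{\mu\delta}{\mu+\delta}$ gives $\mu - \xi = \frac{\mu^2}{\mu+\delta}$, so the coefficient equals exactly $\delta$. Hence $\Norm{g}_{w^*} \le \delta \Norm{\bfw^+ - \bfv_w}_w$, which is the MRN criterion.

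\textbf{Main obstacle.} The only delicate point is justifying that a subgradient with $V_w(\tilde \bfw) + \psi_w^\prime(\tilde \bfw) = \0$ exists, so that \labelcref{item:strongly-maximally-monotone} can be invoked at $\tilde \bfw$. This follows from the stated characterization of VIP solutions via $\0 \in V + \partial\psi$. Note that it relies on the paper's earlier remark under monotonicity and continuity, which here come from \labelcref{item:a-monotone-operator-and-a-convex-function}. The edge case $\Norm{g}_{w^*} = 0$ is trivial.
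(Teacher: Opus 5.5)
Your proof is correct and follows essentially the same route as the paper: a triangle inequality through $\tilde \bfw$, strong maximal monotonicity to get $\Norm{\bfw^+ - \tilde \bfw}_w \le \mu^{-1}\Norm{g}_{w^*}$, and an absorption step that yields the coefficient $\frac{\mu\xi}{\mu-\xi} \le \delta$. The only difference is immaterial: the paper absorbs $\Norm{\bfv_w - \tilde \bfw}_w$ rather than $\Norm{g}_{w^*}$. One small correction to your ``main obstacle'' remark: the inclusion $-V_w(\tilde \bfw) \in \partial \psi_w(\tilde \bfw)$ follows directly from the definition of a (strong) solution via the subgradient inequality, so it needs neither monotonicity nor continuity.
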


\begin{proof}
    In view of the triangle inequality and the $\mu$-strong maximal monotonicity of $V _w + \psi _w$, we have
    \[
    \begin{aligned}
    &\Norm {\bfv_{w} - \tilde \bfw} _{w}
    \le \Norm {\bfw^+ - \bfv_{w}} _{w} + \Norm {\bfw^+ - \tilde \bfw} _{w} \\
    &\quad \le \Norm {\bfw^+ - \bfv_{w}} _{w} + \frac 1 {\mu} \Norm { V _w (\bfw^+) + \psi _w ^\prime (\bfw^+) } _{w^*} \\
    &\quad \le \Norm {\bfw^+ - \bfv_{w}} _{w} + \frac {\xi } {\mu } \Norm {\bfv_{w} - \tilde \bfw} _{w} .
    \end{aligned}
    \]
    Then, we have
    \[
        \Norm {\bfv_{w} - \tilde \bfw} _{w} \le \frac {\mu } {\mu - \xi } \Norm {\bfw^+ - \bfv_{w}} _{w} .
    \]
    Therefore, we have
    \[
    \Norm { V _w (\bfw^+) + \psi _w ^\prime (\bfw^+) } _{w^*} \le \xi \Norm {\bfv_{w} - \tilde \bfw} _{w}
    \le \frac { \mu \xi } {\mu - \xi } \Norm {\bfw^+ - \bfv_{w}} _{w} \le \delta ,
    \]
    where the last inequality follows from the assignment $\xi \le \frac{\mu\delta} {\mu+\delta}$.
\end{proof}

We will leverage efficient existing solvers for the MRN problems. In particular, we are to deal with the specific MRNs given in Line~\ref{line:coordinate-MRNs-in-FDS} in \cref{alg:fully-decoupled-solver}, where the corresponding coordinate-wise operators are gradients of smooth convex functions.
Therefore, we can leverage the existing accelerated gradient methods from the literature.

Let us apply, for instance, the {A}ccumulative {R}egularization {M}ethod~(\ARM) from \cite{lan2023optimal}, whose detailed pseudocode is presented in \cref{alg:accumulative_regularization} in \cref{sec:MRN-solver-ARM} for completeness.
Let us denote this algorithm as
\begin{equation}
\label{eq:ARM-equation}
\ARM\ ( \nabla f _w, \psi _w, \bfv_w, \xi \mid L ),
\end{equation}
which takes an MRN instance of interest, has knowledge of the parameter $L$ in \labelcref{item:operator-Lipschitz-continuity}, and returns a solution that satisfies $\xi$-distance-to-solution accuracy.

Now, we state the oracle complexity of MRN with respect to the distance-to-solution accuracy. The original result of \cite{lan2023optimal} is given in projected gradient norm, which can be converted to the subgradient norm considered in this paper. We defer the detailed proof to \cref{sec:MRN-solver-ARM}.

\begin{lemma}[\citealt{lan2023optimal}]
\label[lemma]{lemma:making-the-gradient-norm-small-full}
    Assume \labelcref{item:a-monotone-operator-and-a-convex-function}, \labelcref{item:operator-Lipschitz-continuity}, \labelcref{item:operator-is-gradient-of-convex-function}, and that the solution set of the VIP of $(V_w, \psi _w)$ is non-empty.
    Let
    \[
    ( \bfw^+, \psi _w ^\prime (\bfw^+) ) = \ARM\ ( \nabla f _w, \psi _w, \bfv_w, \xi \mid L ).
    \]
    Then, \ARM takes no more than
    \(
    34 \cdot \sqrt { \frac {3 L} {2 \xi} } 
    \)
    queries to $\nabla f _w (\cdot)$ and ensures that $( \bfw^+, \psi _w ^\prime (\bfw^+) )$ satisfies $\xi$-distance-to-solution accuracy.
\end{lemma}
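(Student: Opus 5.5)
The plan is to take the guarantee of \cite{lan2023optimal} for \ARM as a black box and then convert it into the form required by \cref{def:distance-to-solution-accuracy}. Lan's result is stated for the \emph{gradient mapping}. For a stepsize parameter $\eta>0$ (we take $\eta = L$), define
\[
\bfw_\eta \triangleq \argmin_{\bfu}\bigl[\InnerProduct{\nabla f_w(\bfw)}{\bfu} + \tfrac{\eta}{2}\Norm{\bfu-\bfw}_w^2 + \psi_w(\bfu)\bigr],
\qquad
g_\eta(\bfw) \triangleq \eta\,\bfP_w(\bfw - \bfw_\eta).
\]
Lan's result bounds $\Norm{g_\eta(\bfw)}_{w^*}$ at the output point.

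\textbf{Step 1 (Lan's bound).} I will quote the theorem of \cite{lan2023optimal} for \ARM under \labelcref{item:a-monotone-operator-and-a-convex-function}, \labelcref{item:operator-Lipschitz-continuity}, \labelcref{item:operator-is-gradient-of-convex-function}, with a non-empty solution set. It states that after $N$ gradient queries the output $\bar\bfw$ satisfies
\[
\Norm{g_L(\bar\bfw)}_{w^*} \le \frac{c\,L}{N^2}\,\Norm{\bfv_w - \tilde\bfw}_w
\]
for an explicit absolute constant $c$ and some solution $\tilde\bfw$. The key feature is that the accumulative regularization loop gives the distance-to-solution form directly, with no logarithmic factor. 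I will track Lan's constants exactly and choose $N$ so that the right-hand side is at most $\frac{\xi}{2}\Norm{\bfv_w-\tilde\bfw}_w$, that is, $N \gtrsim \sqrt{2cL/\xi}$. The claimed bound $34\sqrt{3L/(2\xi)}$ should then come out of Lan's stage count, with one extra query budgeted below.

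\textbf{Step 2 (from gradient mapping to subgradient).} The returned point is $\bfw^+ \triangleq \bar\bfw_L$. The optimality condition of the prox step gives
\[
\psi_w'(\bfw^+) \triangleq L\bfP_w(\bar\bfw - \bfw^+) - \nabla f_w(\bar\bfw) \in \partial\psi_w(\bfw^+).
\]
Hence
\[
\nabla f_w(\bfw^+) + \psi_w'(\bfw^+) = \bigl[\nabla f_w(\bfw^+) - \nabla f_w(\bar\bfw)\bigr] + g_L(\bar\bfw).
\]
By \labelcref{item:operator-Lipschitz-continuity}, the first bracket has dual norm at most $L\Norm{\bfw^+-\bar\bfw}_w = \Norm{g_L(\bar\bfw)}_{w^*}$. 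Therefore the residual at $\bfw^+$ is at most $2\Norm{g_L(\bar\bfw)}_{w^*} \le \xi\Norm{\bfv_w - \tilde\bfw}_w$, which is exactly $\xi$-distance-to-solution accuracy. Also $\bfw^+\in\dom\psi_w$ by construction. Computing $\nabla f_w(\bfw^+)$ is not needed to form $\psi_w'(\bfw^+)$, so the conversion costs no extra query beyond those already counted (at most one, absorbed in the constant).

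\textbf{Main obstacle.} The main obstacle is Step 1's bookkeeping: matching Lan's stated constants, stage lengths and regularization schedule so that the total query count is bounded by precisely $34\sqrt{3L/(2\xi)}$ after the factor-$2$ loss from Step 2. I would first recompute Lan's per-stage bound with target $\xi/2$ and sum the geometric series of stage lengths. If the constant does not match directly, I would instead choose $\eta$ slightly larger than $L$ to trade off the two terms in Step 2.
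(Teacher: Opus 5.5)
Your route matches the paper's. The paper also treats Lan's Theorem~3.1 as a black box for the gradient mapping at an output point $\bfw'$, then converts that bound into the subgradient residual at the prox point $\bfw^+$ using the prox optimality condition and the $L$-Lipschitzness of $\nabla f_w$.

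The only difference is the prox parameter. The paper takes $\bfw^+ = \argmin_{\bfw}[\InnerProduct{\nabla f_w(\bfw')}{\bfw} + \psi_w(\bfw) + L\Norm{\bfw - \bfw'}_w^2]$, i.e.\ $\eta = 2L$. The conversion then costs a factor $\frac32$ (residual $\le 3L\Norm{\bfw^+ - \bfw'}_w$) rather than your $2$, so Lan's bound is invoked at target $\frac23\xi$, which is exactly where $34\sqrt{3L/(2\xi)} = 34\sqrt{L/(\tfrac23\xi)}$ comes from. Your primary choice $\eta = L$ with target $\xi/2$ would only give $34\sqrt{2L/\xi}$. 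Your fallback of enlarging $\eta$ to $2L$ is the one that reproduces the stated constant.
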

%


\section{Decoupled method for SPs: Concrete implementation}
\label{sec:implementable-algorithm-SP}

We are now back to considering the original SPs in \cref{eq:saddle-point-problem}. Let us combine the technical components in \cref{sec:multi-stage-reduction-for-distributed-SPs} and present the final, implementable version of \DMsp.

\paragraph{Implementation of \DMsp.}

We use the $\ARM$ solver in \cref{eq:ARM-equation} for Minimization of Residual Norms:
\begin{equation}\label{eq:MRN-solvers-SP}
\begin{gathered}
\cM ^{\MRN} _{x} ( V_x, \hat \psi_x, \bfv_x,\delta_x ) \triangleq \ARM \bigl( V_x, \hat \psi_x, \bfv_x, \frac {2 \delta_x} {3} \mid L_x \bigr), \\
\cM ^{\MRN} _{y} ( V_y, \hat \psi_y, \bfv_y, \delta_y ) \triangleq \ARM \bigl( V_y, \hat \psi_y, \bfv_y, \frac {2 \delta_y} {3} \mid L_y \bigr) .
\end{gathered}
\end{equation}
Consider the assembled norm $\Norm {\cdot} _{\cE}$ with parameters $\alpha_x$ and $\alpha_y$.
Then, for any Monteiro-Svaiter Subproblem given by $(V^f, \psi_z, \bfv, \lambda)$, we leverage the solver
\[
\algname{FDS-ARM} (V^f, \psi_z, \bfv, \lambda) = \FDS _{\Norm{\cdot}_{\cE}} \bigl(
    V^f, \psi_z, \bfv, \lambda
    \mid
    (\cM ^{\MRN} _{x}, \cM ^{\MRN} _{y} )
\bigr)
\]
Finally, we obtain the concrete algorithm \DMsp as follows:
\begin{mdframed}
\begin{equation}
\label{eq:final-implementation-of-DROMsp}
\ROM _{\Norm{\cdot}_{\cE}} \bigl(
    V^f, \psi_z, \bfz^0, (\lambda_{t})_{t \ge 1}
    \mid
    \algname{FDS-ARM}
\bigr) .
\end{equation}
\end{mdframed}
%

Combining \cref{lemma:Reduced-gradient-method-for-VIPs,lemma:FDS-correctness,lemma:exact-accuracy-to-relative-accuracy-under-strong-monotonicity,lemma:making-the-gradient-norm-small-full}, we are ready to prove the main convergence lemma for SPs in \cref{lemma:DROMsp-communication-complexity}.

\begin{lemma}
\label[lemma]{lemma:DROMsp-communication-complexity}
    Consider \DMsp with the implementation in \cref{eq:final-implementation-of-DROMsp}, applied to $\sP _{\text{\upshape SP}}$.
    Under \labelcref{item:bounded-distance-to-saddle-point,item:continuous-and-convex-concave,item:function-smooth}, for $\lambda_{t+1} \equiv \lambda \ge \frac {2 L_{xy} } { \sqrt {\alpha_x \alpha_y }} $, the algorithm takes no more than
    \(
        2 T
    \)
    communication rounds,
    no more than
    \[
       T \cdot \bigl( 1 + 34 \sqrt { \frac{9 L_x} {2 \alpha_x \lambda} } \bigr)
    \]
    queries to $\nabla _x f$, and no more than
    \[
       T \cdot \bigl( 1 + 34 \sqrt { \frac{9 L_y} {2 \alpha_y \lambda} } \bigr)
    \]
    queries to $- \nabla _y f$,
    and obtains an $\epsilon$-saddle point
    \(
    \bar \bfz^T ,
    \)
    where
    \[
    T = \Ceil[\Big]{\frac { \alpha_x \lambda D_x ^2 + \alpha_y \lambda D_y ^2 } {2 \epsilon}}.
    \]
\end{lemma}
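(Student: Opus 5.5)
The plan is to combine the four building blocks in the order in which they appear in the reduction: first the inner solvers, then \FDS, then the outer \ROM.

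\textbf{Step 1: the ARM calls solve the coordinate-wise MRNs.} Fix an iteration $t$ with anchor $\bfv^t$. Take the $x$-subproblem $(\nabla_x f(\cdot,\bfv^t_y), \hat\psi_x, \bfv^t_x, \delta_x)$ with $\delta_x = \alpha_x\lambda/2$. The operator is the gradient of the convex function $f(\cdot,\bfv^t_y)$, which gives \labelcref{item:operator-is-gradient-of-convex-function}. By \labelcref{item:function-smooth} with $\bfy=\bfy'$, it is $L_x$-Lipschitz, which gives \labelcref{item:operator-Lipschitz-continuity}. The function $\hat\psi_x$ is $\alpha_x\lambda$-strongly convex, so $V_x+\partial\hat\psi_x$ is $\mu$-strongly maximally monotone with $\mu=\alpha_x\lambda$. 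Strong convexity also gives a unique solution, so the solution set is non-empty.

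Next, check the accuracy passed to \ARM. We have $\xi = 2\delta_x/3 = \alpha_x\lambda/3$, and
\[
\frac{\mu\delta_x}{\mu+\delta_x} = \frac{\alpha_x\lambda\cdot\alpha_x\lambda/2}{3\alpha_x\lambda/2} = \frac{\alpha_x\lambda}{3}.
\]
Hence \cref{lemma:exact-accuracy-to-relative-accuracy-under-strong-monotonicity} applies with equality, and the \ARM output solves the MRN. By \cref{lemma:making-the-gradient-norm-small-full}, this costs at most
\[
34\sqrt{\frac{3L_x}{2\xi}} = 34\sqrt{\frac{9L_x}{2\alpha_x\lambda}}
\]
queries to $\nabla_x f$. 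The $y$-side is symmetric. No communication is needed here, since each agent knows $\bfv^t$.

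\textbf{Step 2: FDS solves the MSS, and the per-iteration cost.} Since $\lambda\ge 2\bar L_{\texttt c}$, \cref{lemma:FDS-correctness} shows that the returned $(\bfz^{t+1},\psi'(\bfz^{t+1}))$ solves the MSS \eqref{eq:MS-subproblem-from-SPs}. This is exactly the condition in Lines~\ref{line:find-small-gradient-norm-SP-x}--\ref{line:find-small-gradient-norm-SP-y} of \cref{alg:template-decoupled-reduced-operator-method-for-SP}. Each iteration then uses two rounds:
\begin{itemize}
\item one round to exchange $\bfx^{t+1}$ and $\bfy^{t+1}$;
\item one round to exchange the partial gradients at $\bfz^{t+1}$, which costs one extra query per agent.
\end{itemize}
The step size $a_{t+1}$ and the prox step on $\bfv^{t+1}$ separate across blocks because $\bfP$ is block diagonal and $\psi_z$ is separable, so they need no further rounds. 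Over $T$ iterations this gives $2T$ rounds and the stated query counts.

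\textbf{Step 3: the gap bound, which I expect to be the main obstacle.} Apply \cref{lemma:Reduced-gradient-method-for-VIPs} with an arbitrary $\bfv=(\bfx,\bfy)\in\cB\cap Q$. Dropping the negative term gives
\[
\sum_{t} a_{t+1}\InnerProduct{V_\psi(\bfz^{t+1})}{\bfz^{t+1}-\bfv} \le \tfrac12\|\bfz^0-\bfv\|_\cE^2 \le \tfrac12\bigl(\alpha_x D_x^2+\alpha_y D_y^2\bigr).
\]
Next, lower bound each summand by the gap function. By convexity of $f(\cdot,\bfy^{t+1})$, concavity of $f(\bfx^{t+1},\cdot)$, and the subgradient inequalities for $\psi_x$ and $\psi_y$,
\[
\InnerProduct{V_\psi(\bfz^{t+1})}{\bfz^{t+1}-\bfv} \ge F(\bfx^{t+1},\bfy) - F(\bfx,\bfy^{t+1}).
\]
Summing with weights $a_{t+1}>0$ and using Jensen on the convex map $\bar\bfx\mapsto F(\bar\bfx,\bfy)$ and the concave map $\bar\bfy\mapsto F(\bfx,\bar\bfy)$, the weighted sum is at least $S_T\,[F(\bar\bfx^T,\bfy)-F(\bfx,\bar\bfy^T)]$, where $S_T=\sum_{t} a_{t+1}$.

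The lower bound $a_{t+1}\ge 1/\lambda$ from \cref{lemma:Reduced-gradient-method-for-VIPs} gives $S_T\ge T/\lambda$. Taking the maximum over $\bfv\in\cB\cap Q$ yields
\[
\Delta(\bar\bfz^T) \le \frac{\lambda(\alpha_x D_x^2+\alpha_y D_y^2)}{2T} \le \epsilon
\]
for the stated $T$.

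Two points need care in this step. First, the comparison point $\bfv$ must lie in $Q$, which is guaranteed by the restriction to $\cB\cap Q$. Second, $\bar\bfz^T\in Q$ must hold, which follows from convexity of $Q$. A further subtlety is that the term $\tfrac12\|\bfz^0-\bfv\|_\cE^2$ is controlled by the ball radii, not by the distance to the saddle point. Assumption \labelcref{item:bounded-distance-to-saddle-point} is therefore only needed to guarantee that a solution exists, which in turn makes the MRN solution sets non-empty and the restricted gap meaningful.
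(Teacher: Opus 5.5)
Your proposal is correct and follows the paper's proof. The gap bound comes from the \ROM lemma together with \FDS correctness, giving $a_{t+1}\ge 1/\lambda$ and hence $\Delta(\bar\bfz^T)\le \lambda(\alpha_x D_x^2+\alpha_y D_y^2)/(2T)$. The query count comes from the calibration $\xi = 2\delta/3 = \mu\delta/(\mu+\delta)$ with $\mu=\alpha_x\lambda$ and the \ARM bound. You spell out the convexity/Jensen step and the \ARM hypotheses that the paper leaves implicit. One small nit: $a_{t+1}$ does not split across blocks. It needs no extra round because after the second exchange both agents already hold the full $V_\psi(\bfz^{t+1})$, $\bfz^{t+1}$ and $\bfv^t$.
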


\begin{proof}
    By \labelcref{item:continuous-and-convex-concave}, we have
    \[
    \Delta (\bar \bfz^{T}) \le \Bigl( \sum _{t=0} ^{T-1} a_{t+1} \Bigr) ^{-1} \max _{\bfz \in \cB \cap Q}\ \biggl[ \sum _{t=0} ^{T-1} a_{t+1} \InnerProduct { V _\psi (\bfz^{t+1}) }{ \bfz^{t+1} - \bfz } \biggr] .
    \]
    Further, with $\lambda \ge 2 \bar L_{\texttt{c}}$, by \cref{lemma:Reduced-gradient-method-for-VIPs,lemma:FDS-correctness}, we have
    \[
    \begin{aligned}
    & \Delta (\bar \bfz^{T})
    \le \Bigl( \sum _{t=0} ^{T-1} a_{t+1} \Bigr) ^{-1} \max _{\bfz \in \cB \cap Q}\ \biggl[ \sum _{t=0} ^{T-1} a_{t+1} \InnerProduct { V _\psi (\bfz^{t+1}) }{ \bfz^{t+1} - \bfz } \biggr] \\
    &\quad \le \Bigl( \sum _{t=0} ^{T-1} a_{t+1} \Bigr) ^{-1} \max _{\bfz \in \cB \cap Q}\ \Bigl[ { \frac {\alpha_x} {2} \Norm {\bfx^0 - \bfx}_x ^2 + \frac {\alpha_y} {2} \Norm {\bfy^0 - \bfy}_y ^2 } \Bigr] \\
    &\quad \le \Bigl( \sum _{t=0} ^{T-1} \frac 1 {\lambda_{t+1}} \Bigr) ^{-1} \cdot \frac 1 2 (\alpha_x D_x^2 + \alpha_y D_y^2)
    \le \epsilon ,
    \end{aligned}
    \]
    where the last inequality follows from the assignments of $(\lambda_{t}) _{t \ge 1}$ and $T$.
    Therefore, the number of communication rounds is bounded by $2T$.

    Now we count the number of gradient queries.
    By \cref{lemma:exact-accuracy-to-relative-accuracy-under-strong-monotonicity}, \ARM always returns the solution with the required relative distance accuracy; and in view of \cref{lemma:making-the-gradient-norm-small-full}, it takes no more than $34 \sqrt { \frac{9 L_x } {2 \alpha_x \lambda} }$ gradient queries to $\nabla _x f$ and no more than $34 \sqrt { \frac{9 L_y } {2 \alpha_y \lambda} }$ gradient queries to $- \nabla _y f$.
    Therefore, the numbers of gradient queries to $\nabla _x f$ and $- \nabla _y f$ are bounded by $T \cdot \bigl( 1 + 34 \sqrt { \frac{9 L_x } {2 \alpha_x \lambda} } \bigr)$ and $T \cdot \bigl( 1 + 34 \sqrt { \frac{9 L_y } {2 \alpha_y \lambda} } \bigr)$, respectively.
\end{proof}

Finally, we conclude with the following guarantee on the oracle cost.

\begin{theorem}
\label{thm:DROMsp-computational-complexity}
    Consider \DMsp with the implementation in \cref{eq:final-implementation-of-DROMsp}, applied to $\sP _{\text{\upshape SP}}$.
    With the same choices of $\alpha_x$, $\alpha_y$, and $(\lambda _t) _{t \ge 0}$ as in \cref{thm:DROMsp-communication-complexity},
    we have:
    \[
    N _{\sP_{\text{\upshape SP}}} ^{\DMsp} = (c_x + c_y) \frac{2 L_{xy} D_x D_y}{\epsilon}
    + 102 \Bigl( \frac{L_{xy} D_x D_y}{\epsilon} \Bigr) ^{\frac 1 2} \Bigl( c_x \Bigl( \frac{L_x D_x^2}{\epsilon} \Bigr) ^{\frac 1 2} + c_y \Bigl( \frac{L_y D_y^2}{\epsilon} \Bigr) ^{\frac 1 2} \Bigr) .
    \]
\end{theorem}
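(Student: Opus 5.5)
The plan is to specialize \cref{lemma:DROMsp-communication-complexity} to the parameter choices of \cref{thm:DROMsp-communication-complexity}, namely $\alpha_x = \frac{L_{xy} D_y}{D_x}$, $\alpha_y = \frac{L_{xy} D_x}{D_y}$ and $\lambda = 2$. Then substitute into the weighted oracle count $c_x N_x + c_y N_y$.

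First, check the weak-coupling condition. We have $\sqrt{\alpha_x \alpha_y} = L_{xy}$, so $2 L_{xy}/\sqrt{\alpha_x\alpha_y} = 2 \le \lambda$. Hence \cref{lemma:DROMsp-communication-complexity} applies.

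Second, compute the iteration count. Since $\alpha_x D_x^2 = \alpha_y D_y^2 = L_{xy} D_x D_y$, we get
\[
T = \Ceil[\Big]{\frac{2\cdot 2 L_{xy} D_x D_y}{2\epsilon}} = \Ceil[\Big]{\frac{2 L_{xy} D_x D_y}{\epsilon}} .
\]

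Third, substitute into the per-iteration query counts. For Agent~$x$,
\[
\frac{9 L_x}{2\alpha_x\lambda} = \frac{9 L_x D_x}{4 L_{xy} D_y},
\qquad\text{so}\qquad
34\sqrt{\frac{9L_x}{2\alpha_x\lambda}} = 51\sqrt{\frac{L_x D_x}{L_{xy} D_y}} .
\]
Multiplying by $T \approx \frac{2 L_{xy} D_x D_y}{\epsilon}$ gives
\[
102 \, \frac{L_{xy} D_x D_y}{\epsilon}\sqrt{\frac{L_x D_x}{L_{xy}D_y}}
= 102\Bigl(\frac{L_{xy}D_xD_y}{\epsilon}\Bigr)^{1/2}\Bigl(\frac{L_xD_x^2}{\epsilon}\Bigr)^{1/2} .
\]
The term for Agent~$y$ is symmetric. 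The additive "$1$" per iteration contributes $(c_x+c_y)T \approx (c_x+c_y)\frac{2L_{xy}D_xD_y}{\epsilon}$. Summing the three pieces yields the stated expression.

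The only delicate point is the ceiling in $T$. The stated formula uses $\frac{2L_{xy}D_xD_y}{\epsilon}$ without a ceiling, so the equality should be read up to rounding. One can replace $T$ by $\frac{2L_{xy}D_xD_y}{\epsilon}$, exactly when this quantity is an integer and otherwise plus at most one. Similarly, the ARM count $34\sqrt{3L/(2\xi)}$ with $\xi = \frac{2\delta_x}{3} = \frac{\alpha_x\lambda}{3}$ gives $\frac{3L_x}{2\xi} = \frac{9L_x}{2\alpha_x\lambda}$, which is consistent with the lemma. I would state these rounding conventions explicitly (or bound $\lceil\cdot\rceil$ by $\cdot + 1$ and absorb the extra lower-order terms). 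Apart from that, the proof is a direct substitution, and I do not expect a genuine obstacle.
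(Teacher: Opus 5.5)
Your proposal is correct and follows the paper's intended argument; the paper gives no separate proof and simply specializes \cref{lemma:DROMsp-communication-complexity} with $\alpha_x=\frac{L_{xy}D_y}{D_x}$, $\alpha_y=\frac{L_{xy}D_x}{D_y}$, $\lambda=2$, yielding $T=\lceil 2L_{xy}D_xD_y/\epsilon\rceil$ and per-iteration counts $1+51\sqrt{L_xD_x/(L_{xy}D_y)}$ and $1+51\sqrt{L_yD_y/(L_{xy}D_x)}$, while silently dropping the ceiling exactly as you describe. One refinement of your rounding remark: every term is proportional to $T$, so the rigorous bound is the stated expression multiplied by $\lceil s\rceil/s\le 1+\frac{\epsilon}{2L_{xy}D_xD_y}$, where $s=\frac{2L_{xy}D_xD_y}{\epsilon}$; hence the extra terms can be absorbed only when $\epsilon\lesssim L_{xy}D_xD_y$ (for $\epsilon\gg L_{xy}D_xD_y$, a single iteration already allows up to $51c_x\sqrt{L_xD_x/(L_{xy}D_y)}$ queries, which exceeds the stated formula).
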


As detailed in \cref{sec:existing-algorithms-for-SP}, alternative distributed algorithms either fail to outperform \EG in general for $\sP _{\text{\upshape SP}}$ or suffer from certain theoretical and practical caveats. Consequently, the \EG method remains a crucial baseline for oracle complexity, which we now compare against.

\begin{remark}[Oracle improvement]
    Let us compare the oracle cost of \DMsp against the \EG baseline, which requires
    \[
    N^{\EG} _{{\sP}^\circ _{\text{\upshape SP}}} = (c_x + c_y) \cdot \Bigl( \frac{L_{xy} D_x D_y} {\epsilon} + \frac {L_x D_x^2} {\epsilon} + \frac {L_y D_y^2} {\epsilon} \Bigr)
    \]
    oracle costs~(\cref{proposition:EG-upper-bound-SP}). We conclude that the computational cost of \DMsp is consistently bounded by that of \EG. Furthermore, it yields a substantial improvement when
    \[
    L_x D_x^2 + L_y D_y^2 \gg L_{xy} D_x D_y + \sqrt {L_{xy} D_x D_y} \cdot \Bigl( \frac { c_x } {c_x + c_y} \sqrt {L_x D_x^2 } + \frac { c_y } {c_x + c_y} \sqrt {L_y D_y^2 } \Bigr) .
    \]
    For instance, assuming uniform oracle costs ($c_x = c_y$), this improvement occurs when the diagonal conditioning dominates the cross-coupled conditioning, i.e., $L_x D_x^2 + L_y D_y^2 \gg \sqrt {L_{xy} D_x D_y}$. To our knowledge, \DMsp is the first method to consistently improve upon the \EG oracle cost for $\sP_{\text{\upshape SP}}$.
\end{remark}

\section{Lower complexity bounds for SPs}
\label{sec:lower-complexity-bounds-for-distributed-SP}

In this section, we establish lower complexity bounds for distributed SPs. In particular, we focus on algorithms in the distributed gradient-span framework~\citep{nesterov2004introductory}. This restriction may not be absolutely necessary, and we might be able to avoid it via more complicated reasoning. However, it naturally holds for the majority of practical algorithms as detailed in \Cref{appendix:existing-algorithms}. Hence, we consider this as a crucial first step towards establishing information-theoretic lower bounds for distributed SPs.

To establish the lower bounds, we analyze a subclass of problems, denoted by $\sP_{\text{\upshape SP}}^\circ \subset \sP_{\text{\upshape SP}}$, where the local regularizing components are identically zero (i.e., $\psi_x \equiv 0$ and $\psi_y \equiv 0$). Establishing lower bounds on this unconstrained, smooth subclass yields lower bounds for the broader class $\sP_{\text{\upshape SP}}$. Because the local functions are zero, the agents' updates rely exclusively on the partial gradients of the coupling function $f$, and no subgradients are involved.

For the lower bound analysis, we consider the case where the messages exchanged between rounds allow each agent to reconstruct the other's historical gradients (which can be seen as the worst case for lower bound analysis). Thus, Agent~$x$ utilizes its own gradients up to the current local step, and Agent~$y$'s gradients up to the end of the previous round.

\subsection{Distributed gradient-span framework}
Building upon the information-based complexity established in \Cref{sec:distributed-gradient-based-algorithms}, the gradient-span assumption imposes a structural restriction on the deterministic mappings. We assume that the iterates are constructed by taking linear combinations of the historically observed preconditioned partial gradients.

\paragraph{Algorithm trajectories and histories.}
To describe the available gradients at any given step, we use a more specific notation for the local query trajectories. In each round $t \in \{0, \dots, T-1\}$, Agent~$x$ and Agent~$y$ generate local query trajectories of lengths $\tau_x^t$ and $\tau_y^t$. The query points consist of the agent's local variable and its delayed approximation of the remote variable:
\[
\hat Z_x^t = \bigl\{ \bfz_x^{t,l} = (\bfx^{t, l}, \hat \bfy^{t, l}) \bigr\} _{ l = 0 }^{\tau_x^t - 1}
\quad \text{and} \quad
\hat Z_y^t = \bigl\{ \bfz_y^{t,l} = (\hat \bfx^{t, l}, \bfy^{t, l}) \bigr\} _{l = 0 }^{\tau_y^t - 1} .
\]
Let $Z_x^{t-1} = \bigcup_{i=0}^{t-1} \hat Z_x^i$ and $Z_y^{t-1} = \bigcup_{i=0}^{t-1} \hat Z_y^i$ denote the accumulated sets of query points from all prior rounds (with $Z_x^{-1} = Z_y^{-1} = \emptyset$). Within round $t$, the sets of queries made up to local step $l$ are denoted by $Z_x^{t,l} = Z_x^{t-1} \cup \{ \bfz_x^{t, i} \}_{i=0}^{l-1}$ and $Z_y^{t,l} = Z_y^{t-1} \cup \{ \bfz_y^{t, i} \}_{i=0}^{l-1}$. Finally, the accumulated queries up to the end of round $t$ are $Z_x^t = Z_x^{t, \tau_x^t}$ and $Z_y^t = Z_y^{t, \tau_y^t}$.

\begin{assumption}[Local variables]
\label{assum:local-variables}
For any instance $P \in \sP_{\text{\upshape SP}}^\circ$ and all rounds $t \in \{0, \dots, T-1\}$, the local variables queried by Agent~$x$ and Agent~$y$ satisfy, for all respective local steps $l$:
\begin{align*}
    \bfx ^{t,l} &\in \bfx^0 + \bfP_x^{-1} \spn \bigl\{ \nabla _x f ( \bfz ) \mid \bfz \in Z_x^{t,l} \bigr\}, \\
    \bfy ^{t,l} &\in \bfy^0 + \bfP_y^{-1} \spn \bigl\{ \nabla _y f ( \bfz ) \mid \bfz \in Z_y^{t,l} \bigr\} .
\end{align*}
\end{assumption}

\begin{assumption}[Remote variables]
\label{assum:remote-variables}
For any instance $P \in \sP_{\text{\upshape SP}}^\circ$ and all rounds $t \in \{0, \dots, T-1\}$, the delayed approximations of the remote variables queried by Agent~$x$ and Agent~$y$ satisfy, for all respective local steps $l$:
\begin{align*}
    \hat \bfy ^{t,l} &\in \bfy^0 + \bfP_y^{-1} \spn \bigl\{ \nabla _y f ( \bfz ) \mid \bfz \in Z_y^{t-1} \bigr\} , \\
    \hat \bfx ^{t,l} &\in \bfx^0 + \bfP_x^{-1} \spn \bigl\{ \nabla _x f ( \bfz ) \mid \bfz \in Z_x^{t-1} \bigr\} .
\end{align*}
\end{assumption}

\begin{assumption}[Candidate solutions]
\label{assum:generated-solutions}
For any instance $P \in \sP_{\text{\upshape SP}}^\circ$ and all rounds $t \in \{0, \dots, T-1\}$, the candidate solutions $\bar \bfz^{t+1} = (\bar \bfx^{t+1}, \bar \bfy^{t+1})$ generated at the end of round $t$ satisfy
\begin{align*}
    \bar \bfx^{t+1} &\in \bfx^0 + \bfP_x^{-1} \spn \bigl\{ \nabla _x f ( \bfz ) \mid \bfz \in Z_x^t \bigr\} , \\
    \bar \bfy^{t+1} &\in \bfy^0 + \bfP_y^{-1} \spn \bigl\{ \nabla _y f ( \bfz ) \mid \bfz \in Z_y^t \bigr\} .
\end{align*}
\end{assumption}

\begin{definition}
\label{def:gradient-span}
An algorithm is called \emph{gradient-span} if \cref{assum:local-variables,assum:remote-variables,assum:generated-solutions} are all satisfied.
\end{definition}

These assumptions specify that updates are confined to the available gradient subspaces. The impact of delayed communication is modeled by restricting the cross-variable approximations ($\hat{\bfy}$ and $\hat{\bfx}$) to the remote gradients from $Z_y^{t-1}$ and $Z_x^{t-1}$, respectively.

This gradient-span family includes the majority of practical algorithms, such as \EG, Decoupled-GDA, Cat-EG, and Cat-Cat-DAGDA.
A more detailed discussion is provided in \cref{appendix:existing-algorithms}.

Finally, we remark that our proposed algorithm, \DMsp, also operates within this framework. Specifically, each iteration $t$ of \DMsp requires exactly two communication rounds. In the first round, the local solvers perform multiple steps to compute the approximate solutions $\bfx^{t+1}$ and $\bfy^{t+1}$, generating local trajectories denoted by $\hat Z_x^{2t}$ and $\hat Z_y^{2t}$. In the second round, the agents evaluate partial gradients at the assembled point $\bfz^{t+1}$, yielding singleton trajectories $\hat Z_x^{2t+1} = \hat Z_y^{2t+1} = \{ \bfz^{t+1} \}$.

\subsection{Lower complexity bounds}
\label{sec:proof-for-lower-complexity-bounds}

Now we provide detailed proofs for the lower complexity bounds of distributed gradient-span algorithms applied to $\sP_{\text{\upshape SP}}$. We focus on a subclass of unconstrained SPs with $\psi_x = \psi_y = 0$, denoted $\sP_{\text{\upshape SP}}^\circ$. We assume the initial points are $\bfx^0 = \0$ and $\bfy^0 = \0$. Lower bounds established for this subclass hold for the general class $\sP_{\text{\upshape SP}}$.

Let us consider the spaces $\cE_x = \R^{n_x}$ and $\cE_y = \R^{n_y}$ equipped with the standard Euclidean inner product $\InnerProduct{\cdot}{\cdot}$ and the corresponding Euclidean norm $\Norm{\cdot}$.
The cases with general preconditioned norms $\Norm {\cdot}_x$ and $\Norm {\cdot}_y$ can be proven similarly.

\paragraph{Function subfamilies.}
To establish the overall lower bound, we decompose the general function family into three distinct subfamilies:

\begin{definition}\label{def:quadratic-subclass-x}
    Let $\sF_x = \sF (L_x, 0, 0, D_x, 0)$ denote the set of functions
    \[
    F_x(\bfx, \bfy) = \frac{1}{2} \Norm{\bfA_x \bfx - \bfb_x}^2 ,
    \]
    where the matrix $\bfA_x \in \R^{n_x \times n_x}$ satisfies $\Norm{\bfA_x}^2 \le L_x$, and the vector $\bfb_x \in \R^{n_x}$ is such that the linear system $\bfA_x \bfx = \bfb_x$ has a solution $\bfx^* \in \cE_x$ satisfying $\Norm{\bfx^*} \le D_x$.
\end{definition}

\begin{definition}\label{def:quadratic-subclass-y}
    Let $\sF_y = \sF(0, 0, L_y, 0, D_y)$ denote the set of functions
    \[
    F_y(\bfx, \bfy) = - \frac{1}{2} \Norm{\bfA_y \bfy - \bfb_y}^2 ,
    \]
    where the matrix $\bfA_y \in \R^{n_y \times n_y}$ satisfies $\Norm{\bfA_y}^2 \le L_y$, and the vector $\bfb_y \in \R^{n_y}$ is such that the linear system $\bfA_y \bfy = \bfb_y$ has a solution $\bfy^* \in \cE_y$ satisfying $\Norm{\bfy^*} \le D_y$.
\end{definition}

\begin{definition}\label{def:bilinear-subclass}
    Let $\sF_{xy} = \sF(0, L_{xy}, 0, D_x, D_y)$ denote the set of functions
    \[
    F_{xy} (\bfx, \bfy) = \InnerProduct{\bfA_{xy} \bfx - \bfb_{xy}}{\bfy} ,
    \]
    where the matrix $\bfA_{xy} \in \R^{n_y \times n_x}$ satisfies $\Norm{\bfA_{xy}} \le L_{xy}$, and the vector $\bfb_{xy} \in \R^{n_y}$ is such that the linear system $\bfA_{xy} \bfx = \bfb_{xy}$ has a solution $\bfx^* \in \cE_x$ satisfying $\Norm{\bfx^*} \le D_x$.
\end{definition}

\paragraph{Distributed oracles.}
We consider the pairs of distributed oracles $(\sO_x, \sO_y)$ that return the partial gradients of the functions in these subfamilies:
\begin{itemize}
    \item For $F_x \in \sF_x$: $\sO_x(\bfx, \bfy) = \nabla_x F_x(\bfx, \bfy) = \bfA_x^\top (\bfA_x \bfx - \bfb_x)$ and $\sO_y(\bfx, \bfy) = - \nabla_y F_x(\bfx, \bfy) = \0$.
    \item For $F_y \in \sF_y$: $\sO_x(\bfx, \bfy) = \nabla_x F_y(\bfx, \bfy) = \0$ and $\sO_y(\bfx, \bfy) = - \nabla_y F_y(\bfx, \bfy) = \bfA_y^\top (\bfA_y \bfy - \bfb_y)$.
    \item For $F_{xy} \in \sF_{xy}$: $\sO_x(\bfx, \bfy) = \nabla_x F_{xy}(\bfx, \bfy) = \bfA_{xy}^\top \bfy$ and $\sO_y(\bfx, \bfy) = - \nabla_y F_{xy}(\bfx, \bfy) = \bfb_{xy} - \bfA_{xy} \bfx$.
\end{itemize}

\paragraph{Accuracy measures.}
For any approximate solution $\bar \bfz = (\bar \bfx, \bar \bfy) \in \cE_x \times \cE_y$, the restricted primal-dual gaps over the bounded sets evaluate to:
\begin{itemize}
    \item For $F_x \in \sF_x$: $\Delta _{F_x} (\bar \bfz) = F_x(\bar \bfx, \bar \bfy) - \min _{\bfx \in \cB_x} F_x(\bfx, \bar \bfy) = \frac{1}{2} \Norm{\bfA_x \bar \bfx - \bfb_x}^2$.
    \item For $F_y \in \sF_y$: $\Delta _{F_y} (\bar \bfz) = \max _{\bfy \in \cB_y} F_y(\bar \bfx, \bfy) - F_y(\bar \bfx, \bar \bfy) = \frac{1}{2} \Norm{\bfA_y \bar \bfy - \bfb_y}^2$.
    \item For $F_{xy} \in \sF_{xy}$: $\Delta _{F_{xy}} (\bar \bfz) = \max _{\bfy \in \cB_y} \InnerProduct{\bfA_{xy} \bar \bfx - \bfb_{xy}}{\bfy} - \min _{\bfx \in \cB_x} \InnerProduct{\bfA_{xy} \bfx - \bfb_{xy}}{\bar \bfy} \ge D_y \Norm {\bfA_{xy} \bar \bfx - \bfb_{xy}}$, where the inequality holds since $\min_{\bfx \in \cB_x} \InnerProduct{\bfA_{xy} \bfx - \bfb_{xy}}{\bar \bfy} \le \InnerProduct{\bfA_{xy} \bfx^* - \bfb_{xy}}{\bar \bfy} = 0$.
\end{itemize}

\paragraph{Problem subclasses.}
These constructions yield three distinct problem subclasses:
\begin{itemize}
    \item $\sP_x = \bigl( \sF_x, \sO_x, \sO_y, \epsilon \bigr)$,
    \item $\sP_y = \bigl( \sF_y, \sO_x, \sO_y, \epsilon \bigr)$, and
    \item $\sP_{xy} = \bigl( \sF_{xy}, \sO_{x}, \sO_{y}, \epsilon \bigr)$.
\end{itemize}
The worst-case complexity for general SPs is bounded below by the maximum complexity among these three subclasses.

For the rest of this section, we present our lower bound proofs by establishing a connection from distributed SPs to convex minimization. In \cref{sec:lower-bound-worst-function}, we recall and properly rescale Nesterov's construction for unconstrained convex optimization. We analyze the three problem subclasses separately in \cref{sec:lower-bound-quadratic-subclasses,sec:lower-bound-bilinear-subclass}, mapping the lower bound for each subclass to this core convex minimization problem. Finally, we combine the results to establish the overall lower bound for distributed SPs in \cref{sec:proof-of-main-lower-bound-theorem}.

\subsubsection{The ``worst function in the world'' (with proper rescaling)}
\label{sec:lower-bound-worst-function}

While the original formulation~(Theorem~2.1.7 by~\cite{nesterov2004introductory}) establishes a lower bound with respect to an arbitrary initial distance $\Norm{\bfv^0 - \bfv^*}$, this is insufficient for the lower bound proof in this paper. We are working on a lower bound analysis with a more specific problem subclass, where the algorithms may be designed with prior knowledge of a bound $D$ on the distance to the optimum. To this end, we present a refined version of Nesterov's proposition by properly rescaling Nesterov's original construction to ensure that the distance to the optimum is bounded by $D$, while maintaining the identical lower bound on the function value.

For any given matrix $\bfA \in \R^{m \times n}$ and vector $\bfb \in \R^m$, we define the $k$-th Krylov subspace as follows:
\[
\cH^k (\bfA, \bfb) \triangleq \spn \bigl\{ \bfA ^\top \bfb, (\bfA ^\top \bfA) \bfA ^\top \bfb, \ldots, (\bfA ^\top \bfA)^{k-1} \bfA ^\top \bfb \bigr\}.
\]

\begin{proposition}\label{proposition:worst-cast-construction-bounds}
Let $L > 0$, $D > 0$, and integer $1 \le k \le \frac{\min\{m-1, n\} - 1}{2}$. Then, there exist a matrix $\bfA = \bfA(L, k) \in \R^{m \times n}$ with $\Norm {\bfA} \le L$ and a vector $\bfb = \bfb(L, D, k) \in \R^m$, such that
\[
\min_{\bfv \in \cH^k(\bfA, \bfb)}\ \frac{1}{2} \Norm{\bfA \bfv - \bfb}^2 \ge \frac{3 L^2 D^2}{32 (k+1)^2},
\]
and the linear system $\bfA \bfv = \bfb$ has a solution $\bfv^* \in \R^n$ satisfying $\Norm{\bfv^*} \le D$.
\end{proposition}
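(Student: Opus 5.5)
The plan is to realize Nesterov's ``worst function in the world'' (Theorem~2.1.7 of \cite{nesterov2004introductory}) as a consistent least-squares problem $\frac12\Norm{\bfA\bfv-\bfb}^2$, and to rescale its right-hand side so that the exact solution has norm exactly $D$. Set $p \triangleq 2k+1$. Let $\bfB\in\R^{(p+1)\times p}$ be the bidiagonal difference matrix with entries $\bfB_{ii}=1$, $\bfB_{i+1,i}=-1$, and all other entries zero. Then $\bfB^\top\bfB=\bfA_p$ is Nesterov's tridiagonal matrix, with $2$ on the diagonal and $-1$ on the off-diagonals. Since $\Norm{\bfB}\le 2$, the matrix $\bfA \triangleq \frac L2 \bfB$, padded by zeros to size $m\times n$, satisfies $\Norm{\bfA}\le L$. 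The padding needs $m\ge p+1=2k+2$ and $n\ge p=2k+1$, which is exactly the hypothesis $k\le\frac{\min\{m-1,n\}-1}{2}$.

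Next, let $\bar\bfx\in\R^n$ have coordinates $\bar x_i = 1-\frac{i}{p+1}$ for $i\le p$ and $0$ otherwise. Set $\bfv^*\triangleq c\,\bar\bfx$ with $c = D/\Norm{\bar\bfx}$, and $\bfb\triangleq\bfA\bfv^*$. Then $\bfA\bfv=\bfb$ is solvable with $\Norm{\bfv^*}= D$. The key identity is $\bfA_p\bar\bfx=\bfe_1$, a direct check on the tridiagonal structure. It gives $\bfA^\top\bfb = \frac{L^2}{4}\bfA_p\bfv^* = \frac{L^2c}{4}\bfe_1$. Because $\bfA^\top\bfA$ is tridiagonal, an induction shows $\cH^k(\bfA,\bfb)\subseteq\spn\{\bfe_1,\dots,\bfe_k\}$. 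This is the Krylov-support property of the construction.

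It remains to lower bound the objective on this subspace. Substitute $\bfv=c\bfu$ and use $\bfA_p\bar\bfx=\bfe_1$ together with $\bar\bfx^\top\bfA_p\bar\bfx=\bar x_1=\frac{p}{p+1}$. This gives
\[
\tfrac12\Norm{\bfA\bfv-\bfb}^2=\tfrac{L^2c^2}{8}(\bfu-\bar\bfx)^\top\bfA_p(\bfu-\bar\bfx)=\tfrac{L^2c^2}{8}\bigl(\bfu^\top\bfA_p\bfu-2u_1+\tfrac{p}{p+1}\bigr).
\]
For $\bfu$ supported on the first $k$ coordinates, $\bfu^\top\bfA_p\bfu=\bfu^\top\bfA_k\bfu$. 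The minimum of $\bfu^\top\bfA_k\bfu-2u_1$ is $-\frac{k}{k+1}$, attained at Nesterov's $k$-dimensional minimizer. Hence the minimum over $\cH^k$ is at least $\frac{L^2c^2}{8}\bigl(\frac1{k+1}-\frac1{p+1}\bigr)=\frac{L^2c^2}{16(k+1)}$.

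Finally, the elementary estimate $\Norm{\bar\bfx}^2=\sum_{i=1}^p(1-\frac{i}{p+1})^2\le\frac{p+1}{3}$ gives $c^2\ge\frac{3D^2}{2(k+1)}$. Combining the two bounds yields $\frac{3L^2D^2}{32(k+1)^2}$, as claimed.

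The main obstacle is bookkeeping rather than a real difficulty. The $L^2$ scaling of $\bfA^\top\bfA$ must be matched against Nesterov's $L/4$ normalization. The Krylov space $\cH^k$ starts at $\bfA^\top\bfb$, which is the gradient at $\bfv^0=\0$, and it must reach no further than $\bfe_k$. The bound $\Norm{\bar\bfx}^2\le\frac{p+1}{3}$ must be verified, and this is where the constant $\frac{3}{32}$ is produced.
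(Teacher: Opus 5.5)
Your proposal is correct and follows essentially the same argument as the paper: the same $p = 2k+1$, the same bidiagonal $\bfB_p$ scaled by $L/2$ and zero-padded, and the same right-hand side (your $c$ equals the paper's $\gamma$, and $\bfb = \bfA\bfv^*$ is the paper's $\gamma \frac{L}{2}\bfu$). You also use the same support argument for $\cH^k(\bfA,\bfb)$ and reach the same minimum value $\frac{L^2 c^2}{16(k+1)}$. The only cosmetic difference is that you bound $\|\bar{\mathbf{x}}\|^2 \le \frac{p+1}{3}$ directly, whereas the paper computes $\gamma^2$ exactly and then uses $8k^2+10k+3 \le 8(k+1)^2$; these are literally the same inequality $(2k+1)(4k+3) \le 8(k+1)^2$.
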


\begin{proof}
    Let $p = 2k + 1$. By the condition $k \le \frac{\min\{m-1, n\} - 1}{2}$, we have $p+1 \le m$ and $p \le n$.
    Let $\bfM_p \in \R^{p \times p}$ be the symmetric tridiagonal matrix defined as:
    \[
    \bfM_p = \begin{bmatrix}
        2 & -1 & 0 & \cdots & 0 \\
        -1 & 2 & -1 & \cdots & 0 \\
        0 & -1 & 2 & \ddots & \vdots \\
        \vdots & \vdots & \ddots & \ddots & -1 \\
        0 & 0 & \cdots & -1 & 2
    \end{bmatrix} .
    \]
    Let $\bfB_p \in \R^{(p+1) \times p}$ be the matrix such that $\bfB_p^\top \bfB_p = \bfM_p$, defined as:
    \[
    \bfB_p = \begin{bmatrix}
        1 & 0 & 0 & \cdots & 0 \\
        -1 & 1 & 0 & \cdots & 0 \\
        0 & -1 & 1 & \ddots & \vdots \\
        \vdots & \vdots & \ddots & \ddots & 0 \\
        0 & 0 & \cdots & -1 & 1 \\
        0 & 0 & \cdots & 0 & -1
    \end{bmatrix} .
    \]
    We define the matrix $\bfA \in \R^{m \times n}$ as the block matrix:
    \[
    \bfA = \frac{L}{2} \begin{bmatrix}
        \bfB_p & \0_{(p+1) \times (n-p)} \\
        \0_{(m-p-1) \times p} & \0_{(m-p-1) \times (n-p)}
    \end{bmatrix} .
    \]
    The matrix $\bfA^\top \bfA \in \R^{n \times n}$ is given by the block-diagonal matrix:
    \[
    \bfA^\top \bfA = \frac{L^2}{4} \begin{bmatrix}
        \bfM_p & \0 \\
        \0 & \0
    \end{bmatrix} .
    \]
    The spectral norm of $\bfA$ is bounded as $\Norm{\bfA} = \sqrt{\lambda_{\max}(\bfA^\top \bfA)} = \frac{L}{2} \sqrt{\lambda_{\max}(\bfM_p)} \le L$.

    Let $\gamma = D \sqrt{\frac{6(p+1)}{p(2p+1)}}$. We define the vector $\bfu \in \R^{p+1}$ by its coordinates:
    \[
    u_1 = \frac{p}{p+1} , \quad \text{and} \quad u_i = -\frac{1}{p+1} \quad \text{for } 2 \le i \le p+1 .
    \]
    By the structure of $\bfB_p^\top$, we have $\bfB_p^\top \bfu = \bfe_1^{(p)} \in \R^p$.
    Its squared norm evaluates to:
    \[
    \Norm{\bfu}^2 = \Bigl( \frac{p}{p+1} \Bigr)^2 + p \Bigl( -\frac{1}{p+1} \Bigr)^2 = \frac{p^2 + p}{(p+1)^2} = \frac{p}{p+1} .
    \]
    We define $\bfb \in \R^m$ as the block vector:
    \[
    \bfb = \gamma \frac{L}{2} \begin{bmatrix}
        \bfu \\
        \0_{m-p-1}
    \end{bmatrix} .
    \]
    The linear system $\bfA \bfv = \bfb$ has a solution $\bfv^* \in \R^n$ given by the block vector:
    \[
    \bfv^* = \gamma \begin{bmatrix}
        \bfM_p^{-1} \bfe_1^{(p)} \\
        \0_{n-p}
    \end{bmatrix} .
    \]
    By the structure of $\bfM_p^{-1}$, the coordinates of $\bfv^*$ are $v_i^* = \gamma \frac{p+1-i}{p+1}$ for $1 \le i \le p$, and $0$ otherwise. Its squared norm evaluates to:
    \[
    \Norm{\bfv^*}^2 = \frac{\gamma^2}{(p+1)^2} \sum_{j=1}^p j^2 = \gamma^2 \frac{p(2p+1)}{6(p+1)} = D^2.
    \]

    The Krylov subspace is defined as:
    \[
    \cH^k(\bfA, \bfb) = \spn\{\bfA^\top \bfb, (\bfA^\top \bfA) \bfA^\top \bfb, \dots, (\bfA^\top \bfA)^{k-1} \bfA^\top \bfb\}.
    \]
    Since $\bfB_p^\top \bfu = \bfe_1^{(p)}$, the initial vector evaluates to $\bfA^\top \bfb = \gamma \frac{L^2}{4} \bfe_1^{(n)}$. Successive multiplication of $\bfe_1^{(n)}$ by $\bfA^\top \bfA$ expands the non-zero support by one standard basis vector at a time. Thus, the subspace spans the first $k$ standard basis vectors:
    \[
    \cH^k(\bfA, \bfb) = \spn\{\bfe_1^{(n)}, \bfe_2^{(n)}, \dots, \bfe_k^{(n)}\}.
    \]

    For any $\bfv \in \cH^k(\bfA, \bfb)$, its non-zero support is confined to the first $k$ coordinates. Let $\bfv_k \in \R^k$ denote these first $k$ coordinates, such that the first $p$ coordinates of $\bfv$ are $\bfv_p = (\bfv_k^\top, \0_{p-k}^\top)^\top$.
    Let $\bfM_k \in \R^{k \times k}$ be the leading principal submatrix of $\bfM_p$. The squared residual norm for $\bfv \in \cH^k(\bfA, \bfb)$ evaluates to:
    \[
    \begin{aligned}
    \frac{1}{2} \Norm{\bfA \bfv - \bfb}^2
    &= \frac{L^2}{8} \Norm{ \bfB_p \bfv_p - \gamma \bfu }^2 \\
    &= \frac{L^2}{8} \Bigl( \langle \bfv_p, \bfB_p^\top \bfB_p \bfv_p \rangle - 2\gamma \langle \bfv_p, \bfB_p^\top \bfu \rangle + \gamma^2 \langle \bfu, \bfu \rangle \Bigr) \\
    &= \frac{L^2}{8} \Bigl( \langle \bfv_k, \bfM_k \bfv_k \rangle - 2\gamma \langle \bfv_k, \bfe_1^{(k)} \rangle + \gamma^2 \frac{p}{p+1} \Bigr) .
    \end{aligned}
    \]
    Denoting $v_1 = \langle \bfv_k, \bfe_1^{(k)} \rangle$, the norm simplifies to:
    \[
    \frac{1}{2} \Norm{\bfA \bfv - \bfb}^2 = \frac{L^2}{8} \Bigl( \langle \bfv_k, \bfM_k \bfv_k \rangle - 2\gamma v_1 + \gamma^2 \frac{p}{p+1} \Bigr) .
    \]

    Minimizing this residual norm over $\bfv_k \in \R^k$ yields the optimal solution $\bfv_k^* = \gamma \bfM_k^{-1} \bfe_1^{(k)}$, equivalently characterized by $\bfM_k \bfv_k^* = \gamma \bfe_1^{(k)}$. By the structure of $\bfM_k^{-1}$, the coordinates of $\bfv_k^*$ are $v_{k, i}^* = \gamma \frac{k+1-i}{k+1}$ for $1 \le i \le k$.
    Substituting $\bfv_k = \bfv_k^*$ into the $\bfv_k$-dependent terms (and noting $v_1 = \langle \bfv_k^*, \bfe_1^{(k)} \rangle = v_{k, 1}^*$), the minimum value evaluates to:
    \[
    \begin{aligned}
    \bigl( \langle \bfv_k, \bfM_k \bfv_k \rangle - 2\gamma v_1 \bigr) \Big|_{\bfv_k = \bfv_k^*}
    &= \langle \bfv_k^*, \bfM_k \bfv_k^* \rangle - 2\gamma v_{k, 1}^* \\
    &= \langle \bfv_k^*, \gamma \bfe_1^{(k)} \rangle - 2\gamma v_{k, 1}^* \\
    &= \gamma v_{k, 1}^* - 2\gamma v_{k, 1}^*
    = -\gamma v_{k, 1}^* = -\gamma^2 \frac{k}{k+1} .
    \end{aligned}
    \]

    Substituting these values gives the minimum residual norm over the Krylov subspace:
    \[
    \begin{aligned}
    \min_{\bfv \in \cH^k(\bfA, \bfb)} \frac{1}{2} \Norm{\bfA \bfv - \bfb}^2
    &= \frac{L^2 \gamma^2}{8} \Bigl( \frac{p}{p+1} - \frac{k}{k+1} \Bigr) \\
    &= \frac{L^2 \gamma^2}{8} \Bigl( \frac{2k+1}{2k+2} - \frac{k}{k+1} \Bigr) \\
    &= \frac{L^2 \gamma^2}{16(k+1)} \\
    &= \frac{L^2 D^2}{16(k+1)} \frac{6(2k+2)}{(2k+1)(4k+3)} \\
    &= \frac{3 L^2 D^2}{4(8k^2 + 10k + 3)}.
    \end{aligned}
    \]
    Since $8k^2 + 10k + 3 \le 8(k+1)^2$, this residual norm is lower bounded by $\frac{3 L^2 D^2}{32(k+1)^2}$.
\end{proof}

\subsubsection{Quadratic subclasses \texorpdfstring{$\sP_x$ and $\sP_y$}{Px and Py}}
\label{sec:lower-bound-quadratic-subclasses}

In this section, we apply \cref{proposition:worst-cast-construction-bounds} to establish lower bounds for the quadratic subclasses $\sP_x$ and $\sP_y$. It should be noted that to establish lower bounds for quadratic subclasses, only the assumptions on local variables (\cref{assum:local-variables}) and candidate solutions (\cref{assum:generated-solutions}) are used.

For any instance $F_x \in \sP_x$ of the form $F_x(\bfx)$, the partial gradient with respect to $\bfy$ is identically zero, and the gradient $\nabla_x F_x(\bfx)$ depends only on the local variable $\bfx$. As a result, the communication between agents provides no additional information, and the restriction on Agent~$x$ reduces to the standard gradient-span condition for single-node unconstrained convex optimization. That is, for this subclass, \cref{assum:local-variables,assum:generated-solutions} become:
\begin{align*}
    \bfx^{t,l} &\in \bfx^0 + \bfP_x^{-1} \spn \bigl\{ \nabla_x F_x ( \bfx^{i,j} ) \mid 0 \le i < t,\ 0 \le j \le \tau_x^i - 1 \text{ or } i = t,\ 0 \le j \le l - 1 \bigr\} , \\
    \bar \bfx^{t+1} &\in \bfx^0 + \bfP_x^{-1} \spn \bigl\{ \nabla_x F_x ( \bfx^{i,j} ) \mid 0 \le i \le t,\ 0 \le j \le \tau_x^i - 1 \bigr\} .
\end{align*}

Notice that the gradient span sequences are naturally confined to the standard Krylov subspaces. Specifically, for $\bfA_x \in \R^{n_x \times n_x}$ and $\bfb_x \in \R^{n_x}$, the subspace satisfies the algebraic progression:
\[
(\bfA_x^\top \bfA_x) \cH^m (\bfA_x, \bfb_x) \subseteq \cH^{m+1} (\bfA_x, \bfb_x)
\quad \text{and} \quad
\bfA_x^\top \bfb_x \in \cH^{m+1} (\bfA_x, \bfb_x) .
\]

\begin{proposition}\label{prop:krylov-bound-quadratic-x}
    Let $\sM$ be a distributed method satisfying \cref{assum:local-variables} and \cref{assum:generated-solutions}, operating on an instance $F_x \in \sP_x$ of the form $F_x(\bfx) = \frac{1}{2} \Norm{\bfA_x \bfx - \bfb_x}^2$. Let $\bar \bfx$ be the candidate solution generated by Agent~$x$ after evaluating $N_x$ partial gradients. Then, $\bar \bfx \in \cH^{N_x} (\bfA_x, \bfb_x)$.
\end{proposition}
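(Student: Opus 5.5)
The plan is an induction on the number of partial-gradient evaluations made by Agent~$x$. In the lower-bound section the paper takes $\bfx^0 = \0$ and works with the standard Euclidean geometry, so $\bfP_x$ is the identity. The general preconditioned case follows the same way, with Krylov subspaces built from the preconditioned matrix. The gradient has the form $\nabla_x F_x(\bfx) = \bfA_x^\top \bfA_x \bfx - \bfA_x^\top \bfb_x$, and the reduced conditions displayed just before the statement say that every query point and every candidate solution lies in the span of previously observed gradients. So it suffices to prove the following claim: after $j$ gradient evaluations, every observed gradient and every admissible next query point lies in $\cH^{j}(\bfA_x,\bfb_x)$ for the points, and in $\cH^{j+1}(\bfA_x,\bfb_x)$ for the gradients. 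The convention is $\cH^0 = \{\0\}$.

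First I would order all of Agent~$x$'s queries chronologically across rounds as $\bfx_1, \bfx_2, \dots, \bfx_{N_x}$. This ordering is $(t,l)$ in lexicographic order. Agent~$y$'s information is irrelevant, since its oracle returns $\0$. The first query satisfies $\bfx_1 \in \bfx^0 + \spn\emptyset = \{\0\} = \cH^0$.

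The inductive step has two parts. Suppose $\bfx_1,\dots,\bfx_j \in \cH^{j-1}$. Note that the $\cH^m$ are nested and increasing, and that $\cH^{j-1}$ is used for all of them since it contains each earlier $\cH^{i-1}$. Then, by the algebraic progression stated before the proposition, each gradient satisfies $\nabla_x F_x(\bfx_i) = \bfA_x^\top\bfA_x \bfx_i - \bfA_x^\top \bfb_x \in \cH^{j}$. Hence the span of the first $j$ gradients lies in $\cH^{j}$. By \cref{assum:local-variables} (in the reduced form), $\bfx_{j+1}$ lies in that span, which closes the induction.

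Finally, the candidate solution $\bar\bfx$ produced after the round in which the $N_x$-th evaluation occurs lies, by \cref{assum:generated-solutions}, in the span of the gradients at $\bfx_1,\dots,\bfx_{N_x}$. That span is contained in $\cH^{N_x}$, which proves the statement.

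The only delicate point is bookkeeping rather than mathematics. It has two parts. The first is making the cross-round indexing consistent, so that the accumulated set $Z_x^{t,l}$ corresponds exactly to the first $j$ queries in chronological order. The second is checking that the remote coordinates $\hat\bfy$ cannot inject information. They cannot, because $F_x$ does not depend on $\bfy$ and the $y$-oracle returns $\0$.
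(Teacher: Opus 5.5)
Your proof is correct and follows essentially the same route as the paper's. You order Agent~$x$'s queries chronologically, show by induction that the span of the first $m$ observed gradients lies in $\cH^{m}(\bfA_x,\bfb_x)$ using $(\bfA_x^\top\bfA_x)\cH^{m} \subseteq \cH^{m+1}$ and $\bfA_x^\top\bfb_x \in \cH^{m+1}$, and conclude with \cref{assum:generated-solutions}.

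One wording slip: your opening claim puts the gradients after $j$ evaluations in $\cH^{j+1}$. The inductive step you actually carry out gives the sharper $\cH^{j}$, and that is what lands $\bar\bfx$ in $\cH^{N_x}$ rather than $\cH^{N_x+1}$.
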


\begin{proof}
    For $F_x \in \sP_x$, we have $\psi_x = \psi_y = 0$. Let $\bfz_1, \dots, \bfz_{N_x}$ be the sequence of query points evaluated by Agent~$x$, where $\bfz_i = (\bfx_i, \bfy_i)$. Since $\bfx^0 = \0$, \cref{assum:local-variables} and \cref{assum:generated-solutions} require that each local query point $\bfx_{m+1}$ and the candidate solution $\bar \bfx$ reside in the span of historical gradients. Let $S_m \triangleq \spn \{ \nabla_x F_x(\bfz_i) \mid 1 \le i \le m \} \subseteq \cE_x$. We have $\bfx_{m+1} \in S_m$ and $\bar \bfx \in S_{N_x}$.

    We show $S_m \subseteq \cH^m (\bfA_x, \bfb_x)$ by induction. The base case $m=0$ holds since $S_0 = \{\0\} \subseteq \cH^0 (\bfA_x, \bfb_x)$.

    Assume $S_m \subseteq \cH^m (\bfA_x, \bfb_x)$ for some $m \ge 0$. The gradient at $\bfx_{m+1}$ evaluates to $\nabla_x F_x(\bfx_{m+1}) = \bfA_x^\top \bfA_x \bfx_{m+1} - \bfA_x^\top \bfb_x$. Since $\bfx_{m+1} \in S_m \subseteq \cH^m (\bfA_x, \bfb_x)$, applying the algebraic progression properties yields:
    \[
    \nabla_x F_x(\bfx_{m+1}) \in (\bfA_x^\top \bfA_x) \cH^m (\bfA_x, \bfb_x) - \bfA_x^\top \bfb_x \subseteq \cH^{m+1} (\bfA_x, \bfb_x) .
    \]
    Thus, $S_{m+1} = S_m + \spn \{ \nabla_x F_x(\bfx_{m+1}) \} \subseteq \cH^{m+1} (\bfA_x, \bfb_x)$.

    By induction, $S_{N_x} \subseteq \cH^{N_x} (\bfA_x, \bfb_x)$, which implies $\bar \bfx \in \cH^{N_x} (\bfA_x, \bfb_x)$.
\end{proof}

Analogously, for any instance $F_y \in \sP_y$ evaluated by Agent~$y$, of the form $F_y(\bfx, \bfy) = - \frac{1}{2} \Norm{\bfA_y \bfy - \bfb_y}^2$, the candidate solution generated after $N_y$ queries satisfies $\bar \bfy \in \cH^{N_y}(\bfA_y, \bfb_y)$.

\begin{theorem}
\label{thm:lower-bound-quadratic-subclasses}
    Let $\sM$ be a distributed method satisfying \cref{assum:local-variables} and \cref{assum:generated-solutions} for problem class $\sP_{\text{\upshape SP}}$, where $n_x \ge 2 \sqrt{\frac{3 L_x D_x^2}{32 \epsilon}} + 2$ and $n_y \ge 2 \sqrt{\frac{3 L_y D_y^2}{32 \epsilon}} + 2$.
    Then, we have
    \[
    N^{\sM}_{\sP_x} \ge c_x \biggl( \sqrt{\frac{3 L_x D_x^2}{32 \epsilon}} - 1 \biggr) ,
    \]
    and analogously,
    \[
    N^{\sM}_{\sP_y} \ge c_y \biggl( \sqrt{\frac{3 L_y D_y^2}{32 \epsilon}} - 1 \biggr) .
    \]
\end{theorem}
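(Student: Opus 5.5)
We reduce to \cref{proposition:worst-cast-construction-bounds} through the Krylov containment of \cref{prop:krylov-bound-quadratic-x}, arguing by contradiction on the number of queries. We treat $\sP_x$; the case $\sP_y$ is symmetric, with $F_y = -\frac12\Norm{\bfA_y\bfy-\bfb_y}^2$ and the analogous Krylov statement for Agent~$y$.

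\textbf{Choice of $k$.} Set
\[
k \triangleq \Bigl\lceil \sqrt{\tfrac{3 L_x D_x^2}{32\epsilon}} \Bigr\rceil - 1 .
\]
This is the largest integer with $\frac{3 L_x D_x^2}{32(k+1)^2} > \epsilon$, up to the boundary case of exact equality, which we handle with a strict/non-strict adjustment or an arbitrarily small perturbation of $\epsilon$. We may assume $k \ge 1$; otherwise the claimed bound $c_x(\sqrt{\cdot}-1) \le 0$ holds trivially.

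\textbf{Hard instance.} Apply \cref{proposition:worst-cast-construction-bounds} with $m = n = n_x$, $L = \sqrt{L_x}$ and $D = D_x$. This gives $\bfA_x$ with $\Norm{\bfA_x}^2 \le L_x$ and $\bfb_x$ whose solution satisfies $\Norm{\bfx^*} \le D_x$, so that $F_x \in \sF_x$. The dimension hypothesis $n_x \ge 2\sqrt{3L_xD_x^2/(32\epsilon)} + 2$ ensures $k \le \frac{n_x-2}{2}$, which is exactly the condition $k \le \frac{\min\{m-1,n\}-1}{2}$ required by the proposition. A small technical point: the proposition produces a rectangular $\bfA$, whereas \cref{def:quadratic-subclass-x} requires $\bfA_x$ to be $n_x\times n_x$. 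Since $p+1 \le n_x$, we take $m = n = n_x$ and pad with zeros.

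\textbf{Counting queries.} Let $T = T^{\sM}_{P}$ be the first round at which $\Delta_{F_x}(\bar\bfz^T) \le \epsilon$, and let $N_x$ be the number of $x$-queries made up to that round. By \cref{prop:krylov-bound-quadratic-x}, $\bar\bfx^T \in \cH^{N_x}(\bfA_x,\bfb_x)$. The Krylov subspaces are nested, so if $N_x \le k$ then $\bar\bfx^T \in \cH^k$. In that case
\[
\Delta_{F_x}(\bar\bfz^T) = \tfrac12\Norm{\bfA_x\bar\bfx^T-\bfb_x}^2 \ge \frac{3L_xD_x^2}{32(k+1)^2} > \epsilon ,
\]
which is a contradiction. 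Hence $N_x \ge k+1 \ge \sqrt{\frac{3L_xD_x^2}{32\epsilon}}$, and in particular $N_x \ge \sqrt{\frac{3L_xD_x^2}{32\epsilon}} - 1$. Since $N^{\sM}_{\sP_x} \ge c_x N_{x,P}^{\sM}$ (the $y$-term is nonnegative), the bound follows.

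\textbf{Main obstacle.} The delicate step is the rounding and dimension bookkeeping. We must check that the chosen $k$ satisfies both the proposition's dimension constraint and the strict gap inequality simultaneously. The $-1$ slack in the statement leaves enough room: choosing $k = \lfloor\sqrt{\cdot}\rfloor - 1$ also works, since it gives a strict inequality whenever $k+1 < \sqrt{\cdot}$, and we can fall back to it if the ceiling version causes trouble at the boundary.
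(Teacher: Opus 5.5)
Write $a\triangleq\frac{3L_xD_x^2}{32\epsilon}$. Your skeleton is the paper's: a hard instance from \cref{proposition:worst-cast-construction-bounds}, Krylov containment via \cref{prop:krylov-bound-quadratic-x}, the nesting $\cH^{N_x}\subseteq\cH^{k}$, and a contradiction. Routing through the nesting is actually the sound way to use the proposition. The proposition only controls the residual over $\cH^{k}$ for the instance built with that same $k$. The paper instead bounds the minimum over $\cH^{N_x}$, on the instance built with $K=\lfloor\sqrt a\rfloor$, by $\frac{3L_xD_x^2}{32(N_x+1)^2}$, which the proposition does not give. For $N_x=0$ and $K\ge 2$ the true value is $\frac{3L_xD_x^2}{4(4K+3)}<\frac{3L_xD_x^2}{32}$.

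The genuine gap in your proposal is the main choice of $k$, which is off by one for every $a$, not only in a boundary case. The strict inequality $\frac{3L_xD_x^2}{32(k+1)^2}>\epsilon$ is equivalent to $k+1<\sqrt a$, whereas your $k+1=\lceil\sqrt a\rceil\ge\sqrt a$. For example, $a=10$ gives $k=3$ and $\frac{3L_xD_x^2}{32(k+1)^2}=\frac{10}{16}\epsilon<\epsilon$. So the contradiction never materializes. Your conclusion $N_x\ge\sqrt a$ is therefore not established; that it came out strictly stronger than the theorem should have been a warning sign.

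The correct index is the largest $k$ with $k+1<\sqrt a$, namely $k=\lceil\sqrt a\rceil-2$. It satisfies the dimension condition since $k<\sqrt a\le\frac{n_x-2}{2}$. The contradiction now yields $N_x\ge k+1=\lceil\sqrt a\rceil-1\ge\sqrt a-1$. So the $-1$ in the statement is not slack; it is exactly what this rounding costs.

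Your fallback $\lfloor\sqrt a\rfloor-1$ agrees with this except when $\sqrt a$ is an integer, where it only gives equality. No perturbation is needed if you take $\lceil\sqrt a\rceil-2$ from the start.

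With the corrected $k$, your remark that $k<1$ makes the bound trivial is no longer true. For $1<\sqrt a\le 2$ one has $k=0$, so the proposition is unavailable, yet the claim still requires $N_x\ge 1$. This needs one extra line. If $N_x=0$, \cref{assum:generated-solutions} forces $\bar\bfx=\bfx^0=\0$. Take the instance with $\bfA_x$ equal to $\sqrt{L_x}$ times the identity, $\bfb_x=\sqrt{L_x}\,\bfx^*$ and $\Norm{\bfx^*}=D_x$. Its gap at $\bar\bfx=\0$ is $\frac{L_xD_x^2}{2}>\frac{3L_xD_x^2}{32}>\epsilon$.
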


\begin{proof}
    We prove the bound for $\sP_x$. Let $K = \lfloor \sqrt{\frac{3 L_x D_x^2}{32 \epsilon}} \rfloor$. Since $n_x \ge 2 K + 2$, we apply \cref{proposition:worst-cast-construction-bounds} to obtain $\bfA_K = \bfA(\sqrt{L_x}, K)$ and $\bfb_K = \bfb(\sqrt{L_x}, D_x, K)$ constructed in $\R^{n_x \times n_x}$ and $\R^{n_x}$.

    We set $\bfA_x = \bfA_K$, $\bfb_x = \bfb_K$, and define $F_K(\bfx, \bfy) = \frac{1}{2} \Norm{\bfA_x \bfx - \bfb_x}^2$.
    By \cref{proposition:worst-cast-construction-bounds}, $\Norm{\bfA_x}^2 \le L_x$, and the linear system $\bfA_x \bfx = \bfb_x$ has a solution $\bfx^*$ satisfying $\Norm{\bfx^*} \le D_x$. Hence, $F_K \in \sP_x$.

    When $\sM$ is applied to $P_K$, suppose it generates an $\epsilon$-saddle point $\bar \bfz$ utilizing $N_x$ queries to $\sO_x$. If $N_x \ge K$, the bound $N_x \ge \sqrt{\frac{3 L_x D_x^2}{32 \epsilon}} - 1$ holds. If $N_x < K$, \cref{prop:krylov-bound-quadratic-x} implies $\bar{\bfx} \in \cH^{N_x}(\bfA_x, \bfb_x)$.

    Bounding the restricted duality gap via \cref{proposition:worst-cast-construction-bounds} yields:
    \[
    \epsilon \ge \Delta_{F_K}(\bar \bfz)
    = \frac{1}{2} \Norm{\bfA_x \bar{\bfx} - \bfb_x}^2
    \ge \min_{\bfv \in \cH^{N_x} (\bfA_x, \bfb_x)} \frac{1}{2} \Norm{\bfA_x \bfv - \bfb_x}^2
    \ge \frac{3 L_x D_x^2}{32 (N_x + 1)^2} .
    \]
    Rearranging gives $N_x \ge \sqrt{\frac{3 L_x D_x^2}{32 \epsilon}} - 1$.
    Multiplying by $c_x$ yields $N^{\sM}_{\sP_x} \ge c_x \bigl( \sqrt{\frac{3 L_x D_x^2}{32 \epsilon}} - 1 \bigr)$.

    The proof for $\sP_y$ is symmetric, setting $\bfA_y = \bfA(\sqrt{L_y}, K)$ and $\bfb_y = \bfb(\sqrt{L_y}, D_y, K)$ for $K = \lfloor \sqrt{\frac{3 L_y D_y^2}{32 \epsilon}} \rfloor$.
\end{proof}

\subsubsection{Bilinear subclass \texorpdfstring{$\sP_{xy}$}{Pxy}}
\label{sec:lower-bound-bilinear-subclass}

In this section, we establish the lower bounds for the bilinear subclass $\sP_{xy}$. It should be noted that to establish lower bounds for the bilinear subclass, only the assumptions on remote variables (\cref{assum:remote-variables}) and candidate solutions (\cref{assum:generated-solutions}) are used.

Consider any function from $\sP_{xy}$ of the form $F(\bfx, \bfy) = \InnerProduct{\bfA \bfx - \bfb}{\bfy}$, where $\bfA \in \R^{n_y \times n_x}$ and $\bfb \in \R^{n_y}$. The partial gradients are $\nabla_x F(\bfx, \bfy) = \bfA^\top \bfy$ and $\nabla_y F(\bfx, \bfy) = \bfA \bfx - \bfb$. For any $\bar \bfz = (\bar \bfx, \bar \bfy) \in \cE_x \times \cE_y$, the restricted primal-dual gap evaluates to:
\[
\Delta_F (\bar \bfz) \ge \max _{\bfy \in \cB_y}\ \InnerProduct {\bfA \bar \bfx - \bfb} {\bfy} = D_y \Norm{\bfA \bar \bfx - \bfb} .
\]

We define the coupled Krylov subspaces in $\cE_y$ and $\cE_x$ as:
\[
\cH^k_y(\bfA, \bfb) \triangleq \spn \bigl\{ \bfb, (\bfA \bfA^\top) \bfb, \dots, (\bfA \bfA^\top)^{k-1} \bfb \bigr\} ,
\]
\[
\cH^k_x(\bfA, \bfb) \triangleq \spn \bigl\{ \bfA^\top \bfb, (\bfA^\top \bfA) \bfA^\top \bfb, \dots, (\bfA^\top \bfA)^{k-1} \bfA^\top \bfb \bigr\} \equiv \cH^k(\bfA, \bfb) .
\]
By definition, these subspaces satisfy the alternating properties:
\[
\bfA \cH^m_x(\bfA, \bfb) + \spn\{\bfb\} = \cH^{m+1}_y(\bfA, \bfb)
\quad \text{and} \quad
\bfA^\top \cH^m_y(\bfA, \bfb) = \cH^m_x(\bfA, \bfb) \subseteq \cH^{m+1}_x(\bfA, \bfb) .
\]

\begin{proposition}\label{prop:krylov-bound-T}
    Let $\sM$ be a distributed method satisfying \cref{assum:remote-variables} and \cref{assum:generated-solutions}, operating on $F(\bfx, \bfy) = \InnerProduct{\bfA \bfx - \bfb}{\bfy}$. Let $\bar{\bfx}$ be the candidate solution generated after $T$ communication rounds. Then, $\bar{\bfx} \in \cH^{\lceil (T-1)/2 \rceil}_x(\bfA, \bfb)$.
\end{proposition}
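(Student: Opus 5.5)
The plan is an induction on the round index that tracks how the Krylov depth of each agent's gradient span grows from round to round. With $\bfx^0=\0$, $\bfy^0=\0$ and Euclidean norms ($\bfP_x,\bfP_y$ identities), define
\[
G_x^t \triangleq \spn\{\nabla_x F(\bfz)\mid \bfz\in Z_x^t\}, \qquad G_y^t \triangleq \spn\{\nabla_y F(\bfz)\mid \bfz\in Z_y^t\},
\]
with $G_x^{-1}=G_y^{-1}=\{\0\}$. The key observation is that $\nabla_x F(\bfx,\bfy)=\bfA^\top\bfy$ depends only on the $\bfy$-component of the query point. For Agent~$x$ that component is the remote variable $\hat\bfy^{t,l}$, so by \cref{assum:remote-variables} it lies in $G_y^{t-1}$. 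Symmetrically, $\nabla_y F(\bfx,\bfy)=\bfA\bfx-\bfb$ depends only on $\hat\bfx^{t,l}\in G_x^{t-1}$. The local variables, and hence \cref{assum:local-variables}, never enter, which matches the remark that only the remote-variable and candidate assumptions are used.

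This gives the one-round recursion
\[
G_x^t \subseteq G_x^{t-1}+\bfA^\top G_y^{t-1}, \qquad G_y^t \subseteq G_y^{t-1}+\bfA G_x^{t-1}+\spn\{\bfb\}.
\]
Each agent's new gradients depend only on the other agent's span from the \emph{previous} round, so information crosses between agents at most once per round. Using the alternating properties $\bfA\cH^m_x+\spn\{\bfb\}=\cH^{m+1}_y$ and $\bfA^\top\cH^m_y=\cH^m_x\subseteq\cH^{m+1}_x$, I would prove by induction on $t\ge -1$ the staggered claim
\[
G_x^t\subseteq \cH_x^{\lfloor (t+1)/2\rfloor}, \qquad G_y^t\subseteq \cH_y^{\lceil (t+1)/2\rceil}.
\]
The base case $t=-1$ is trivial. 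A quick check confirms the pattern: at $t=0$, $G_y^0\subseteq\spn\{\bfb\}=\cH^1_y$ and $G_x^0=\{\0\}$; at $t=1$, $G_x^1\subseteq \bfA^\top\cH^1_y=\cH^1_x$.

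In the inductive step, both $\lfloor\cdot\rfloor$ and $\lceil\cdot\rceil$ are nondecreasing, so the $G^{t-1}$ terms are already contained in the targets. It remains to verify the index arithmetic $\lceil t/2\rceil\le\lfloor (t+1)/2\rfloor$ and $\lfloor t/2\rfloor+1\le\lceil (t+1)/2\rceil$. Both hold with equality, which one can confirm by splitting into $t$ even and $t$ odd. This parity bookkeeping is the step I expect to need the most care.

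Finally, by \cref{assum:generated-solutions}, the candidate $\bar\bfx$ produced after $T$ rounds, i.e.\ at the end of round $T-1$, lies in
\[
G_x^{T-1}\subseteq\cH_x^{\lfloor T/2\rfloor}.
\]
The stated bound is $\cH_x^{\lceil (T-1)/2\rceil}$, and $\lceil (T-1)/2\rceil=\lfloor T/2\rfloor$, so the two agree. If the convention for round counting differs by one, I would adjust the base case of the induction accordingly.
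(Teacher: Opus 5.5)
Your proof is correct and follows essentially the same argument as the paper. It runs the same round-by-round induction on both agents' gradient spans via the alternating Krylov relations, and your invariants $\cH_x^{\lfloor (t+1)/2\rfloor}$ and $\cH_y^{\lceil (t+1)/2\rceil}$ are identical to the paper's $\cH_x^{\lceil t/2\rceil}$ and $\cH_y^{\lfloor t/2\rfloor+1}$. You are slightly more explicit than the paper in absorbing the carried-over spans $G_x^{t-1}, G_y^{t-1}$ through nestedness of the Krylov subspaces; your hedge about round counting is unnecessary, since $\bar\bfx$ after $T$ rounds is indeed built from $Z_x^{T-1}$.
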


\begin{proof}
    Let $S_x^t \triangleq \spn \{ \nabla_x F(\bfz) \mid \bfz \in Z_x^t \} \subseteq \cE_x$ and $S_y^t \triangleq \spn \{ -\nabla_y F(\bfz) \mid \bfz \in Z_y^t \} \subseteq \cE_y$.
    We show by induction that $S_x^t \subseteq \cH^{\lceil t/2 \rceil}_x(\bfA, \bfb)$ and $S_y^t \subseteq \cH^{\lfloor t/2 \rfloor + 1}_y(\bfA, \bfb)$.

    The base case $t=-1$ holds since $S_x^{-1} = \{\0\} \subseteq \cH^0_x(\bfA, \bfb)$ and $S_y^{-1} = \{\0\} \subseteq \cH^0_y(\bfA, \bfb)$.

    Assume the claim holds for round $t-1$. In round $t$, Agent~$x$ queries points using remote variables $\hat{\bfy}$. By \cref{assum:remote-variables}, $\hat{\bfy} \in S_y^{t-1}$. Applying the alternating properties, the evaluated gradient satisfies:
    \[
    \nabla_x F(\bfx, \hat{\bfy}) = \bfA^\top \hat{\bfy} \in \bfA^\top \cH^{\lfloor (t-1)/2 \rfloor + 1}_y(\bfA, \bfb) = \cH^{\lfloor (t-1)/2 \rfloor + 1}_x(\bfA, \bfb) = \cH^{\lceil t/2 \rceil}_x(\bfA, \bfb) .
    \]
    Thus, $S_x^t \subseteq \cH^{\lceil t/2 \rceil}_x(\bfA, \bfb)$. Similarly, Agent~$y$ queries points using remote variables $\hat{\bfx}$. By \cref{assum:remote-variables}, $\hat{\bfx} \in S_x^{t-1}$. The evaluated gradient satisfies:
    \[
    -\nabla_y F(\hat{\bfx}, \bfy) = \bfb - \bfA \hat \bfx \in \spn\{\bfb\} + \bfA \cH^{\lceil (t-1)/2 \rceil}_x(\bfA, \bfb) = \cH^{\lceil (t-1)/2 \rceil + 1}_y(\bfA, \bfb) = \cH^{\lfloor t/2 \rfloor + 1}_y(\bfA, \bfb) .
    \]
    Thus, $S_y^t \subseteq \cH^{\lfloor t/2 \rfloor + 1}_y(\bfA, \bfb)$. By induction, the claim holds for all $t$. By \cref{assum:generated-solutions}, the candidate solution satisfies $\bar{\bfx} \in S_x^{T-1} \subseteq \cH^{\lceil (T-1)/2 \rceil}_x(\bfA, \bfb)$.
\end{proof}

\begin{theorem}
\label{thm:lower-bound-bilinear}
    Let $\sM$ be a distributed method satisfying \cref{assum:remote-variables} and \cref{assum:generated-solutions} for problem class $\sP_{xy}$, where $\min \{ n_x, n_y \} \ge \frac{2 L_{xy} D_x D_y}{3 \epsilon} + 2$. Then, we have
    \[
    T^{\sM}_{\sP_{xy}} \ge \frac{2 L_{xy} D_x D_y}{3 \epsilon} - 2 .
    \]
\end{theorem}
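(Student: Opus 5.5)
We follow the template of \cref{thm:lower-bound-quadratic-subclasses}: build one hard bilinear instance from \cref{proposition:worst-cast-construction-bounds}, and use \cref{prop:krylov-bound-T} to confine the candidate $\bar\bfx$ to a Krylov subspace whose dimension grows only with the number of communication rounds.

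\textbf{Choice of $K$ and the instance.} Set
\[
K = \Bigl\lfloor \frac{L_{xy} D_x D_y}{3\epsilon} \Bigr\rfloor ,
\]
so that the dimension hypothesis $\min\{n_x,n_y\} \ge \frac{2L_{xy}D_xD_y}{3\epsilon}+2 \ge 2K+2$ gives $K \le \frac{\min\{n_y-1,n_x\}-1}{2}$. (If $K=0$ the bound is trivial, since $T \ge 1 > \frac{2L_{xy}D_xD_y}{3\epsilon}-2$.) Apply \cref{proposition:worst-cast-construction-bounds} with $m=n_y$, $n=n_x$, $L=L_{xy}$, $D=D_x$ to obtain $\bfA=\bfA(L_{xy},K)\in\R^{n_y\times n_x}$ and $\bfb=\bfb(L_{xy},D_x,K)$. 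Take $F(\bfx,\bfy)=\InnerProduct{\bfA\bfx-\bfb}{\bfy}$, so $\bfA_{xy}=\bfA$ and $\bfb_{xy}=\bfb$.

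\textbf{Membership in $\sF_{xy}$.} We have $\Norm{\bfA}\le L_{xy}$, and $\bfA\bfx=\bfb$ has a solution with $\Norm{\bfx^*}\le D_x$. Taking $\bfy^*=\0$, the pair $(\bfx^*,\bfy^*)$ is a saddle point, because $\nabla_x F=\bfA^\top\bfy^*=\0$ and $\nabla_y F=\bfA\bfx^*-\bfb=\0$. Moreover $\Norm{\bfy^0-\bfy^*}=0\le D_y$, so $F\in\sF_{xy}$.

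\textbf{Krylov confinement and the gap bound.} Suppose $\sM$ outputs an $\epsilon$-saddle point after $T$ rounds, and assume for contradiction that $T < \frac{2L_{xy}D_xD_y}{3\epsilon}-2$. Set $k=\lceil (T-1)/2\rceil$. Then $k \le T/2 < \frac{L_{xy}D_xD_y}{3\epsilon}-1$, hence $k+1 \le K$. By \cref{prop:krylov-bound-T}, $\bar\bfx\in\cH^{k}_x(\bfA,\bfb)\subseteq\cH^{K}(\bfA,\bfb)$. The gap estimate for $\sF_{xy}$ and \cref{proposition:worst-cast-construction-bounds} then give
\[
\epsilon \ge \Delta_F(\bar\bfz) \ge D_y\Norm{\bfA\bar\bfx-\bfb} \ge D_y\sqrt{\tfrac{3L_{xy}^2D_x^2}{16(K+1)^2}} = \frac{\sqrt3\,L_{xy}D_xD_y}{4(K+1)} .
\]

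\textbf{Main obstacle: the constant.} Rearranging the display gives $K+1\ge \frac{\sqrt3 L_{xy}D_xD_y}{4\epsilon}$. This is incompatible with $K \le \frac{L_{xy}D_xD_y}{3\epsilon}$ only if $\frac{\sqrt3}{4}>\frac13$ and the $+1$ is absorbed, and that absorption may fail when $L_{xy}D_xD_y/\epsilon$ is small. So the real difficulty is matching the stated constants. I would fix this by choosing $K$ more cleverly rather than through the contradiction above. Concretely, apply the Proposition with $K' = k = \lceil (T-1)/2\rceil$, which is legitimate as long as $2K'+2\le\min\{n_x,n_y\}$. This yields $\epsilon\ge \frac{\sqrt3 L_{xy}D_xD_y}{4(k+1)}$. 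Combined with $k+1\le \frac{T+2}{2}$, this gives
\[
T \ge \frac{\sqrt3\,L_{xy}D_xD_y}{2\epsilon}-2 \ge \frac{2L_{xy}D_xD_y}{3\epsilon}-2 ,
\]
since $\frac{\sqrt3}{2}\ge\frac23$. The catch is that the instance would then depend on $T$, so it must be chosen before the algorithm runs. To break this circularity, take the instance with $K'=\lfloor \frac{L_{xy}D_xD_y}{3\epsilon}\rfloor$ fixed in advance. If $T$ exceeds $2K'$, the bound already holds. Otherwise $k\le K'$, and $\cH^{k}\subseteq\cH^{K'}$ yields the same residual lower bound. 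Carefully bookkeeping floors and ceilings in this case split is the step where the stated constants $\frac23$ and $-2$ must be verified.
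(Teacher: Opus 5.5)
Your final step does not go through. The problem is a missing estimate, not floor/ceiling bookkeeping.

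Write $R=L_{xy}D_xD_y/\epsilon$ and fix the instance $(\bfA,\bfb)$ built with index $K'=\lfloor R/3\rfloor$. The inclusion $\cH^k\subseteq\cH^{K'}$ only gives $\min_{\cH^k}\ge\min_{\cH^{K'}}$. That is the $K'$-indexed bound $\Norm{\bfA\bar\bfx-\bfb}\ge\frac{\sqrt3\,L_{xy}D_x}{4(K'+1)}$, which you already showed gives no contradiction for $R$ below roughly $10$.

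The $k$-indexed bound $\frac{\sqrt3\,L_{xy}D_x}{4(k+1)}$ is a statement about a \emph{different} instance, the one built with index $k$. For the fixed $K'$-instance it is false when $k<K'$. For $T=1$ you have $k=0$ and $\bar\bfx=\0$, so the residual is $\Norm{\bfb}=L_{xy}D_x\sqrt{3/(2(4K'+3))}$. This drops below $\frac{\sqrt3}{4}L_{xy}D_x$ as soon as $K'\ge2$.

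The paper's proof makes the same substitution. It applies the bound of \cref{proposition:worst-cast-construction-bounds} with $8k^2+10k+3$ to $(\bfA_K,\bfb_K)$, which already fails at $k=0$. So it cannot serve as justification for this step.

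What you need is the residual of the fixed instance on the smaller Krylov spaces. Take the computation in the proof of \cref{proposition:worst-cast-construction-bounds} before $p=2k+1$ is substituted. For the $K'$-instance ($p=2K'+1$) and every $0\le k\le K'$, it gives
\[
\min_{\bfv\in\cH^k(\bfA,\bfb)}\frac12\Norm{\bfA\bfv-\bfb}^2=\frac{L_{xy}^2\gamma^2}{8}\Bigl(\frac{2K'+1}{2K'+2}-\frac{k}{k+1}\Bigr),
\qquad
\gamma^2=D_x^2\,\frac{6(2K'+2)}{(2K'+1)(4K'+3)} .
\]
The argument then runs as follows.
\begin{enumerate}
\item Under your hypothesis $T<\frac{2R}{3}-2$ you already derived $k\le K'-1$. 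Also $K'\ge1$, because $T\ge1$ forces $R>\frac92$.
\item The bracket decreases in $k$, so $k=K'-1$ is the worst case. It gives $\Norm{\bfA\bar\bfx-\bfb}^2\ge\frac{3L_{xy}^2D_x^2(K'+2)}{2K'(2K'+1)(4K'+3)}$.
\item Combine this with $\epsilon\ge D_y\Norm{\bfA\bar\bfx-\bfb}$ and with $\epsilon\le\frac{L_{xy}D_xD_y}{3K'}$ (from $3K'\le R$). This forces $27K'(K'+2)\le 2(2K'+1)(4K'+3)$, i.e.\ $11K'^2+34K'\le 6$.
\item That is impossible for $K'\ge1$, so $T\ge\frac{2R}{3}-2$ for every $R$.
\end{enumerate}
This uses a single instance that fits the stated dimension hypothesis.

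The non-circular version of your adaptive idea is to index the instance by a target round count rather than by the realized $T$. It would give roughly $\frac{\sqrt3}{2}R$. However, it needs $\min\{n_x,n_y\}$ of order $\frac{\sqrt3}{2}R$, which the hypothesis $\min\{n_x,n_y\}\ge\frac{2R}{3}+2$ does not supply for large $R$.
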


\begin{proof}
    Let $K = \lfloor \frac{L_{xy} D_x D_y}{3 \epsilon} \rfloor$. Since $n_x, n_y \ge 2 K + 2$, we apply \cref{proposition:worst-cast-construction-bounds} to obtain $\bfA_K = \bfA(L_{xy}, K) \in \R^{n_y \times n_x}$ and $\bfb_K = \bfb(L_{xy}, D_x, K) \in \R^{n_y}$.

    We set $\bfA_{xy} = \bfA_K$ and $\bfb_{xy} = \bfb_K$, and define $F_K(\bfx, \bfy) = \InnerProduct{\bfA_{xy} \bfx - \bfb_{xy}}{\bfy}$.
    By \cref{proposition:worst-cast-construction-bounds}, $\Norm{\bfA_{xy}} = \Norm{\bfA_K} \le L_{xy}$. The linear system $\bfA_{xy} \bfx = \bfb_{xy}$ has a solution $\bfx^*$ satisfying $\Norm{\bfx^*} \le D_x$. Thus, $F_K \in \sP_{xy}$.

    When $\sM$ is applied to $P_K$, suppose it generates an $\epsilon$-saddle point $(\bar{\bfx}, \bar{\bfy})$ after $T$ communication rounds. If $T \ge 2K$, then $T \ge \frac{2 L_{xy} D_x D_y}{3 \epsilon} - 2$ holds. If $T < 2K$, \cref{prop:krylov-bound-T} implies $\bar{\bfx} \in \cH^k_x(\bfA_{xy}, \bfb_{xy})$, where $k = \lceil (T-1)/2 \rceil$.

    Since $\cH^k_x(\bfA_{xy}, \bfb_{xy}) \equiv \cH^k(\bfA_K, \bfb_K)$, the restricted duality gap on $P_K$ evaluates to:
    \[
    \Delta_{F_K}(\bar{\bfx}, \bar{\bfy}) \ge D_y \Norm{\bfA_K \bar \bfx - \bfb_K} .
    \]
    Applying \cref{proposition:worst-cast-construction-bounds} yields:
    \[
    \epsilon \ge \Delta_{F_K}(\bar{\bfx}, \bar{\bfy}) \ge D_y \min_{\bfv \in \cH^k(\bfA_K, \bfb_K)} \Norm{\bfA_K \bfv - \bfb_K} \ge D_y \sqrt{ \frac{3 L_{xy}^2 D_x^2}{2 (8k^2 + 10k + 3)} } \ge \frac{L_{xy} D_x D_y}{3 (k+1)} .
    \]
    Rearranging gives $k \ge \frac{L_{xy} D_x D_y}{3 \epsilon} - 1$.
    Because $k = \lceil (T-1)/2 \rceil$, we have $T \ge 2k \ge \frac{2 L_{xy} D_x D_y}{3 \epsilon} - 2$.
\end{proof}

The oracle lower bounds can be proved using identical subspace confinement arguments. Rather than repeating the proof, we provide the main proposition and theorem here.

\begin{proposition}\label{prop:query-points-and-approximate-solutions-live-in-Krylov-subspaces}
    Let $\sM$ be a distributed method satisfying \cref{assum:remote-variables} and \cref{assum:generated-solutions}, operating on $F(\bfx, \bfy) = \InnerProduct{\bfA \bfx - \bfb}{\bfy}$. Let $(\bar \bfx, \bar \bfy)$ be the solution generated after $N_x$ and $N_y$ queries to $\sO_x$ and $\sO_y$, respectively. Then, $\bar{\bfx} \in \cH^k_x (\bfA, \bfb)$, where $k = \min(N_x, N_y)$.
\end{proposition}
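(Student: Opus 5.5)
The plan is to repeat the subspace-confinement induction from \cref{prop:krylov-bound-T}, but to index it by the number of oracle queries rather than by communication rounds. Set $\bfx^0=\0$, $\bfy^0=\0$ and Euclidean norms, as in \cref{sec:proof-for-lower-complexity-bounds}. Let $S_x^{(n)}$ denote the span of the first $n$ partial gradients $\nabla_x F$ returned to Agent~$x$, and $S_y^{(n)}$ the span of the first $n$ vectors $-\nabla_y F$ returned to Agent~$y$. The target is the pair of coupled inclusions
\[
S_x^{(n)} \subseteq \cH_x^{\min(n,\, m_y)}(\bfA,\bfb), \qquad S_y^{(n)} \subseteq \cH_y^{\min(n,\, m_x)+1}(\bfA,\bfb),
\]
where $m_y$ is the number of $y$-queries already made when Agent~$x$'s $n$-th query point was formed, and $m_x$ is defined symmetrically. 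In the end only a cruder consequence is needed: every $x$-gradient lies in $\cH_x^{N_y}$, and every $y$-gradient produced before $x$ has made $j$ queries lies in $\cH_y^{j+1}$.

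The key observation is that the local variable plays no role. For this bilinear $F$, $\nabla_x F(\bfx,\hat\bfy)=\bfA^\top\hat\bfy$ depends only on the remote variable, and $-\nabla_y F(\hat\bfx,\bfy)=\bfb-\bfA\hat\bfx$ depends only on $\hat\bfx$. This is why the proposition needs only \cref{assum:remote-variables} and \cref{assum:generated-solutions}. By \cref{assum:remote-variables}, every $\hat\bfy$ used by Agent~$x$ lies in the span of $y$-gradients from earlier rounds. Each such $y$-gradient has the form $\bfb-\bfA\hat\bfx$, where $\hat\bfx$ lies in the span of $x$-gradients from still earlier rounds.

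The argument then proceeds by strong induction over the global chronological order of queries, using the alternating identities $\bfA^\top\cH_y^m=\cH_x^m$ and $\bfA\cH_x^m+\spn\{\bfb\}=\cH_y^{m+1}$. Two facts drive it. First, an $x$-gradient can only climb one Krylov level above the $y$-information it sees. Second, the $y$-information can only climb one level above the $x$-information available to Agent~$y$. Since every $y$-gradient is one of at most $N_y$ queries, and $\cH_y^{j}$ grows with each $\bfA$-multiplication, we can argue by counting distinct levels: the chain $\bfb \to \bfA^\top\bfb \to \bfA\bfA^\top\bfb \to \cdots$ must alternate between the two oracles. Each new $\cH_x$-level therefore requires at least one fresh $x$-query and at least one fresh $y$-query. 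Formally, I would define $\ell_x(n)$ as the smallest $k$ with $S_x^{(n)}\subseteq\cH_x^k$, and $\ell_y(n)$ analogously. I would then show $\ell_x(n)\le \ell_y(\text{\#}y\text{-queries seen})$ and $\ell_y(n)\le \ell_x(\text{\#}x\text{-queries seen})+1$, together with the monotone bound $\ell_x(n)\le n$. Unrolling these gives $\ell_x(N_x)\le\min(N_x,N_y)$.

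The step I expect to be the main obstacle is the bound $\ell_y(n)\le n$, i.e.\ showing that $n$ $y$-queries cannot reach $\cH_y^{n+1}$ through a single query. This holds because the $y$-span gains at most one dimension per query, and the levels are nested. Jumping a level still costs a query, since $\cH_y^{j+1}\setminus\cH_y^{j}$ has to be entered by some specific query vector. The clean way to handle this is a dimension/nesting argument: after $n$ queries the span is at most $n$-dimensional and is contained in some $\cH_y^{\ell}$. One then shows inductively that it is actually contained in $\cH_y^{n}$, or in $\cH_y^{n+1}$ counting $\bfb$. Combined with the symmetric bound for $x$, this yields $\ell_x\le\min(N_x,N_y)$. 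Finally, \cref{assum:generated-solutions} gives $\bar\bfx\in S_x^{(N_x)}\subseteq\cH_x^{\min(N_x,N_y)}(\bfA,\bfb)$, as claimed.
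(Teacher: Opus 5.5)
Your framework matches what the paper intends. The paper gives no explicit proof; it only says the argument is the subspace-confinement induction of \cref{prop:krylov-bound-T}, re-indexed by queries. Your two coupled inequalities $\ell_x(n)\le\ell_y(\cdot)$ and $\ell_y(n)\le\ell_x(\cdot)+1$, which come from $\bfA^\top\cH_y^m=\cH_x^m$ and $\bfA\cH_x^m+\spn\{\bfb\}=\cH_y^{m+1}$, are exactly the right ingredients.

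The gap is in the step you yourself flag, $\ell_y(n)\le n$. It equally affects your ``monotone bound'' $\ell_x(n)\le n$, which does not follow from monotonicity. A dimension/nesting argument cannot give these bounds. Knowing that $S_y^{(n)}$ is at most $n$-dimensional and lies in some $\cH_y^{\ell}$ gives no control on $\ell$. A single vector can lie in $\cH_y^{j+1}\setminus\cH_y^{j}$ for any $j$, so as far as counting on $S_y$ alone is concerned, one query could jump many levels. Your hedge ``or in $\cH_y^{n+1}$ counting $\bfb$'' would not suffice either, since it only yields $\bar\bfx\in\cH_x^{N_y+1}$. The same off-by-one sits in your stated target $S_y^{(n)}\subseteq\cH_y^{\min(n,m_x)+1}$ when $n\le m_x$.

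What rules out level-jumping is causality, obtained by composing your two inequalities. Suppose the $n$-th $y$-query is made in round $t$. By \cref{assum:remote-variables}, its $\hat\bfx$ lies in the span of $x$-gradients from rounds $\le t-1$. Each of those $x$-gradients used a $\hat\bfy$ spanned by $y$-gradients from rounds $\le t-2$, and all of these are among the first $n-1$ $y$-queries. Hence $\bfb-\bfA\hat\bfx\in\spn\{\bfb\}+\bfA\bfA^\top\cH_y^{\ell_y(n-1)}=\cH_y^{\ell_y(n-1)+1}$, so $\ell_y(n)\le\ell_y(n-1)+1$. Symmetrically, $\ell_x(n)\le\ell_x(n-1)+1$.

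Since $\bfx^0=\bfy^0=\0$, we have $\ell_x(0)=\ell_y(0)=0$, so these recursions give $\ell_x(n)\le n$ and $\ell_y(n)\le n$. Therefore $\ell_x(N_x)\le\min\{N_x,\ell_y(N_y)\}\le\min\{N_x,N_y\}$, and \cref{assum:generated-solutions} finishes the proof as you describe. With this replacement for the dimension argument, your proof is complete.
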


\begin{theorem}
    Let $\sM$ be a distributed method satisfying \cref{assum:remote-variables} and \cref{assum:generated-solutions} for problem class $\sP_{xy}$, where $\min \{ n_x, n_y \} \ge \frac{2 L_{xy} D_x D_y}{3 \epsilon} + 2$. Then, we have
    \[
    N^{\sM}_{\sP_{xy}} \ge (c_x + c_y) \biggl( \frac{L_{xy} D_x D_y}{3 \epsilon} - 1 \biggr) .
    \]
\end{theorem}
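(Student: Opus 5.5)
The argument reuses the hard instance from \cref{thm:lower-bound-bilinear} and replaces the round-based subspace confinement of \cref{prop:krylov-bound-T} with a query-based one, namely \cref{prop:query-points-and-approximate-solutions-live-in-Krylov-subspaces}. I would set $K = \lfloor \frac{L_{xy} D_x D_y}{3\epsilon} \rfloor$. The dimension assumption gives $n_x, n_y \ge 2K+2$, so \cref{proposition:worst-cast-construction-bounds} provides $\bfA_K = \bfA(L_{xy}, K)$ and $\bfb_K = \bfb(L_{xy}, D_x, K)$. The bilinear instance $F_K(\bfx,\bfy) = \InnerProduct{\bfA_K\bfx - \bfb_K}{\bfy}$ then lies in $\sP_{xy}$: we have $\Norm{\bfA_K} \le L_{xy}$, there is a solution $\bfx^*$ with $\Norm{\bfx^*} \le D_x$, and we take $\bfy^* = \0$.

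Suppose $\sM$ outputs an $\epsilon$-saddle point after $N_x$ queries to $\sO_x$ and $N_y$ queries to $\sO_y$, and let $k = \min(N_x, N_y)$. By \cref{prop:query-points-and-approximate-solutions-live-in-Krylov-subspaces}, $\bar\bfx \in \cH^k_x(\bfA_K, \bfb_K) = \cH^k(\bfA_K,\bfb_K)$. If $k \ge K$, the bound is immediate. Otherwise, the gap bound $\Delta_{F_K}(\bar\bfz) \ge D_y\Norm{\bfA_K\bar\bfx - \bfb_K}$ combined with \cref{proposition:worst-cast-construction-bounds} gives $\epsilon \ge \frac{L_{xy}D_xD_y}{3(k+1)}$, exactly as in the proof of \cref{thm:lower-bound-bilinear}. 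Hence $k \ge \frac{L_{xy}D_xD_y}{3\epsilon} - 1$, and since $N_x, N_y \ge k$,
\[
c_x N_x + c_y N_y \ge (c_x + c_y)\Bigl(\frac{L_{xy} D_x D_y}{3\epsilon} - 1\Bigr).
\]
Taking the supremum over the class gives the claimed bound on $N^{\sM}_{\sP_{xy}}$.

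\textbf{Main obstacle.} The real work is justifying \cref{prop:query-points-and-approximate-solutions-live-in-Krylov-subspaces}, which the paper states without proof. I would prove it by induction over the global sequence of queries, counting them instead of rounds. Let $S_x$ and $S_y$ be the spans of the gradients revealed so far.

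\emph{Claim.} After Agent~$x$ has made $m_x$ queries and Agent~$y$ has made $m_y$ queries, $S_x \subseteq \cH_x^{\min(m_x, m_y)}$ and $S_y \subseteq \cH_y^{\min(m_x+1, m_y)}$.

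\emph{Inductive step.} A new $x$-query returns $\bfA^\top\hat\bfy$. By \cref{assum:remote-variables}, $\hat\bfy$ lies in the $y$-span from previous rounds, which is contained in $\cH_y^{\min(m_x+1, m_y)}$. The returned gradient therefore lies in $\bfA^\top \cH_y^{\min(m_x+1,m_y)} = \cH_x^{\min(m_x+1,m_y)}$, which preserves the invariant after $m_x \mapsto m_x+1$. A new $y$-query returns $\bfb - \bfA\hat\bfx$, which lies in $\spn\{\bfb\} + \bfA\cH_x^{\min(m_x,m_y)} = \cH_y^{\min(m_x,m_y)+1}$. This is contained in $\cH_y^{\min(m_x+1,m_y+1)}$, which preserves the invariant after $m_y \mapsto m_y+1$.

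Since only remote variables enter the gradients of the bilinear instance, the local-variable constraint is irrelevant here. By \cref{assum:generated-solutions}, $\bar\bfx \in S_x \subseteq \cH_x^{\min(N_x,N_y)}$, which is the statement of the proposition.
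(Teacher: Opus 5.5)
Your proof has the same structure as the paper's, which only asserts that the argument of \cref{thm:lower-bound-bilinear} carries over. Both use the level-$K$ bilinear instance, the confinement $\bar\bfx\in\cH^k_x(\bfA_K,\bfb_K)$ with $k=\min(N_x,N_y)$, and the residual bound. Your induction over a round-respecting ordering of all queries, with invariant $S_x\subseteq\cH_x^{\min(m_x,m_y)}$ and $S_y\subseteq\cH_y^{\min(m_x+1,m_y)}$, is correct. It uses only that remote variables lie in already-revealed spans, and it supplies the proof of \cref{prop:query-points-and-approximate-solutions-live-in-Krylov-subspaces} that the paper omits.

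The step you import ``exactly as in the proof of \cref{thm:lower-bound-bilinear}'' does not go through, however. \Cref{proposition:worst-cast-construction-bounds} lower-bounds the residual only over $\cH^K(\bfA_K,\bfb_K)$, i.e.\ at the same level $K$ used to build the instance. Since the instance is fixed before the method runs, you cannot rebuild it at level $k$. Repeating the computation from that proof with $p=2K+1$ and Krylov index $k\le K$ gives
\[
\min_{\bfv\in\cH^k(\bfA_K,\bfb_K)}\frac{1}{2}\Norm{\bfA_K\bfv-\bfb_K}^2=\frac{3L_{xy}^2D_x^2\,(2K+1-k)}{4(2K+1)(4K+3)(k+1)} .
\]
This is strictly smaller than the value $\frac{3L_{xy}^2D_x^2}{4(8k^2+10k+3)}$ you use, for every $k<K$. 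When $k\ll K$ it is only of order $L_{xy}^2D_x^2/(K(k+1))$. So $\epsilon\ge\frac{L_{xy}D_xD_y}{3(k+1)}$ is not established.

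The repair keeps your constants. If $k\le K-1$, then $\cH^k\subseteq\cH^{K-1}$, and the formula at level $K-1$ gives
\[
\Norm{\bfA_K\bar\bfx-\bfb_K}^2\ge\frac{3L_{xy}^2D_x^2\,(K+2)}{2K(2K+1)(4K+3)}>\frac{L_{xy}^2D_x^2}{9K^2},
\]
because $27K(K+2)>2(2K+1)(4K+3)$ for $K\ge1$. Hence $\Delta_{F_K}(\bar\bfz)>\frac{L_{xy}D_xD_y}{3K}\ge\epsilon$, a contradiction. So on this instance every $\epsilon$-saddle point is produced with $\min(N_x,N_y)\ge K\ge\frac{L_{xy}D_xD_y}{3\epsilon}-1$, which gives your final inequality; the case $K=0$ is trivial. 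The paper's proof of \cref{thm:lower-bound-bilinear} contains the same slip and is repaired the same way.
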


\subsubsection{Lower complexity bounds}
\label{sec:proof-of-main-lower-bound-theorem}

Finally, we derive the lower bound for $\sP_{\text{\upshape SP}}$ by assembling the lower bounds obtained from the three subclasses.

\begin{theorem}
\label[theorem]{theorem:lower_complexity-bounds}
    Let $\sM$ be a distributed gradient-span algorithm for $\sP _{\text{\upshape SP}}$.
    Suppose the dimensions of $\cE_x$ and $\cE_y$ are sufficiently large such that $n_x \ge \frac{2 L_{xy} D_x D_y}{3\epsilon} + \sqrt{\frac {3 L_x D_x^2} {8 \epsilon}} + 2$ and $n_y \ge \frac{2 L_{xy} D_x D_y}{3\epsilon} + \sqrt{\frac {3 L_y D_y^2} {8 \epsilon}} + 2$.
    We have:
    \[
    \begin{gathered}
    T ^{\sM} _{{\sP}_{\text{\upshape SP}}} \ge \frac{2 L_{xy} D_x D_y}{3 \epsilon} - 2 , \\
    N ^ {\sM} _{{\sP} _{\text{\upshape SP}}} \ge \frac{c_x + c_y}{9} \frac{L_{xy} D_x D_y}{\epsilon} + \frac {c_x} 3 \sqrt{\frac{3 L_x D_x^2}{32 \epsilon}} + \frac {c_y} 3 \sqrt{\frac{3 L_y D_y^2}{32 \epsilon}} - \frac{2c_x + 2c_y}{3}.
    \end{gathered}
    \]
\end{theorem}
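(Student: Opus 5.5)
The proof assembles the three subclass lower bounds already established. Since $\sP_x$, $\sP_y$, and $\sP_{xy}$ are all contained in $\sP_{\text{\upshape SP}}$ (via the embedding of \cref{def:quadratic-subclass-x,def:quadratic-subclass-y,def:bilinear-subclass} with the remaining Lipschitz constants and distances set to zero, which only makes the parameter constraints looser), any worst-case complexity over $\sP_{\text{\upshape SP}}$ dominates each subclass complexity. A gradient-span algorithm satisfies \cref{assum:local-variables,assum:remote-variables,assum:generated-solutions}, so it meets the hypotheses of \cref{thm:lower-bound-quadratic-subclasses}, \cref{thm:lower-bound-bilinear}, and the oracle version of the bilinear bound.

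\textbf{Communication bound.} The dimension hypothesis gives $\min\{n_x,n_y\}\ge \frac{2L_{xy}D_xD_y}{3\epsilon}+2$, so \cref{thm:lower-bound-bilinear} applies directly. Combining it with $T^{\sM}_{\sP_{\text{\upshape SP}}}\ge T^{\sM}_{\sP_{xy}}$ yields the first inequality.

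\textbf{Oracle bound.} We check the dimension requirements of \cref{thm:lower-bound-quadratic-subclasses}. Since $\sqrt{3L_xD_x^2/(8\epsilon)} = 2\sqrt{3L_xD_x^2/(32\epsilon)}$, the hypothesis $n_x \ge \cdots + \sqrt{3L_xD_x^2/(8\epsilon)}+2$ covers the requirement $n_x\ge 2\sqrt{3L_xD_x^2/(32\epsilon)}+2$, and likewise for $n_y$. The bilinear oracle theorem also applies. Writing $a=(c_x+c_y)(\frac{L_{xy}D_xD_y}{3\epsilon}-1)$, $b=c_x(\sqrt{3L_xD_x^2/(32\epsilon)}-1)$ and $c=c_y(\sqrt{3L_yD_y^2/(32\epsilon)}-1)$, we get
\[
N^{\sM}_{\sP_{\text{\upshape SP}}}\ \ge\ \max\{a,b,c\}\ \ge\ \tfrac13(a+b+c).
\]
Expanding, $\frac13(a+b+c)$ equals $\frac{c_x+c_y}{9}\frac{L_{xy}D_xD_y}{\epsilon}+\frac{c_x}{3}\sqrt{\cdot}+\frac{c_y}{3}\sqrt{\cdot}-\frac{2(c_x+c_y)}{3}$, because the constant terms total $-\frac13\bigl((c_x+c_y)+c_x+c_y\bigr)$. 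This is exactly the stated bound.

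\textbf{Main obstacle.} The key point to justify is that the supremum over $\sP_{\text{\upshape SP}}$ dominates each subclass supremum. This requires that each hard instance (for example, $F_K$ built with $L_{xy}=0$) is a legitimate member of $\sF$ with the fixed parameters $(L_x,L_{xy},L_y,D_x,D_y)$. Assumption \labelcref{item:function-smooth} holds with smaller constants, so it holds with the fixed ones. Assumption \labelcref{item:bounded-distance-to-saddle-point} needs a saddle point within distance $D_x, D_y$. For $F_x$ we take any $\bfy^*$, for instance $\0$. For the bilinear instance we take $\bfy^*=\0$ together with $\bfx^*$ solving $\bfA\bfx=\bfb$; this is a saddle point because $\nabla_x F=\bfA^\top\bfy^*=\0$ and $\nabla_yF=\bfA\bfx^*-\bfb=\0$. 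The duality gap is the same restricted gap, so the subclass computations carry over unchanged.
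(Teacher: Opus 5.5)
Your proposal is correct and follows the paper's proof. You bound the class complexity below by the subclass complexities, take $T^{\sM}_{\sP_{xy}}$ for the communication bound, and use $\max\{a,b,c\}\ge\frac13(a+b+c)$ with the same arithmetic for the oracle bound. Your explicit checks are details the paper leaves implicit, and they are sound: the dimension hypotheses (via $\sqrt{3L_xD_x^2/(8\epsilon)}=2\sqrt{3L_xD_x^2/(32\epsilon)}$) and the membership of the hard instances in $\sF$ (e.g., the saddle point $(\bfx^*,\0)$ for the bilinear instance).
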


\begin{proof}[Proof of \cref{theorem:lower_complexity-bounds}]
    The general class of convex-concave saddle point problems $\sP_{\text{\upshape SP}}$ contains the three unregularized subclasses constructed in the previous sections: the $\bfx$-quadratic subclass $\sP_x$, the $\bfy$-quadratic subclass $\sP_y$, and the bilinear subclass $\sP_{xy}$. The worst-case complexity for an algorithm operating over the entire class $\sP_{\text{\upshape SP}}$ is bounded from below by the maximum of the complexities required for these individual subclasses.

    Because a gradient-span algorithm satisfies all three assumptions (\cref{assum:local-variables,assum:remote-variables,assum:generated-solutions}), we can assemble these results. By \cref{thm:lower-bound-bilinear}, the communication complexity over the class is bounded by the communication complexity of the bilinear subclass:
    \[
    T^{\sM}_{\sP_{\text{\upshape SP}}} \ge T^{\sM}_{\sP_{xy}} \ge \frac{2 L_{xy} D_x D_y}{3 \epsilon} - 2.
    \]

    For the computational complexity, we combine the independent lower bounds established in \cref{thm:lower-bound-quadratic-subclasses} and \cref{thm:lower-bound-bilinear}. The total computational complexity is bounded by the maximum of the three individual requirements:
    \[
    N^{\sM}_{\sP_{\text{\upshape SP}}} \ge \max \Bigl\{ N^{\sM}_{\sP_{xy}}, N^{\sM}_{\sP_x}, N^{\sM}_{\sP_y} \Bigr\}.
    \]
    Using the algebraic property $\max\{a, b, c\} \ge \frac{1}{3}(a + b + c)$, we obtain the lower bound:
    \[
    \begin{aligned}
    N^{\sM}_{\sP_{\text{\upshape SP}}}
    &\ge \frac 1 3 \Bigl[ N^{\sM}_{\sP_{xy}} + N^{\sM}_{\sP_x} + N^{\sM}_{\sP_y} \Bigr] \\
    &\ge \frac 1 3 \Biggl[ (c_x + c_y) \biggl( \frac{L_{xy} D_x D_y}{3 \epsilon} - 1 \biggr) + c_x \biggl( \sqrt{\frac{3 L_x D_x^2}{32 \epsilon}} - 1 \biggr) + c_y \biggl( \sqrt{\frac{3 L_y D_y^2}{32 \epsilon}} - 1 \biggr) \Biggr] \\
    &= \frac{c_x + c_y}{9} \frac{L_{xy} D_x D_y}{\epsilon} + \frac {c_x} 3 \sqrt{\frac{3 L_x D_x^2}{32 \epsilon}} + \frac {c_y} 3 \sqrt{\frac{3 L_y D_y^2}{32 \epsilon}} - \frac{2c_x + 2c_y}{3}.
    \end{aligned}
    \]
    This establishes the stated lower bounds for the general problem class and concludes the proof.
\end{proof}

\begin{remark}[Communication optimality]
\label{remark:gap-from-lower-bound}
    \Cref{theorem:lower_complexity-bounds} confirms that the communication lower bound for distributed SPs depends only on the cross-coupled conditioning.
    More importantly, the communication lower bound perfectly matches our upper bound proven in \cref{thm:DROMsp-communication-complexity} up to a constant.
    Therefore, we conclude that our \DMsp is a communication-optimal algorithm within the gradient-span framework.
\end{remark}

We note, however, that a gap remains between the achieved oracle costs and the theoretical oracle lower bound established in \cref{theorem:lower_complexity-bounds}. Yet this is a known open question even for non-distributed SPs.

\section{Variational inequality problems with distributed oracles}
\label{sec:monotone-block-VIPs}

\paragraph{Motivation.}
Thus far, we have studied SPs, which naturally model two-player zero-sum games. To capture more complex multiagent interactions~(such as equilibrium computation in multiplayer general-sum games, network routing, and multiagent reinforcement learning), we extend our algorithmic framework to the broader class of monotone Variational Inequality Problems (VIPs) with separable composite terms and distributed oracles. We briefly outline the problem class and our results here, deferring the detailed presentation to \cref{sec:monotone-block-VIPs-appendix}.

\paragraph{Problem class.}
We consider a distributed multiagent setting with a star communication network over a product space $\cE = \cE_1 \times \cdots \times \cE_K$, where a joint decision variable is partitioned among $K$ autonomous agents as $\bfz = (\bfz_1, \dots, \bfz_K)$. Let us consider the problem class $\sP _{\text{\upshape VIP}}$ as follows:
\begin{itemize}
    \item \textbf{Operators and local components:} The problem is governed by a joint monotone operator $V(\bfz) = (V_1(\bfz), \cdots, V_K(\bfz))$ and a separable local composite function $\psi(\bfz) = \sum_{i=1}^K \psi_i(\bfz_i)$ defined on $Q = \dom \psi_1 \times \cdots \times \dom \psi_K$. We assume the solution set is bounded by local distance parameters $D_i > 0$ for each agent. Furthermore, the operator satisfies block-wise Lipschitz continuity: for any fixed $\bfz_{-j}$, the mapping $V_i(\bfz_j; \bfz_{-j})$ is $L_{ij}$-Lipschitz continuous with respect to $\bfz_j$.
    \item \textbf{Distributed oracles:} Each Agent~$i \in [K]$ controls its local variable $\bfz_i \in \dom \psi_i$, has access to its private function $\psi_i$ and a partial oracle $\sO_i(\bfz) = V_i(\bfz)$.
    \item \textbf{Accuracy measure:} The goal is to find an $\epsilon$-approximate solution $\bar \bfz \in Q$ such that the restricted gap $\Delta (\bar \bfz) \triangleq \sup _{\bfz \in \cB \cap Q} [\InnerProduct { V (\bfz) }{ \bar \bfz - \bfz } + \psi (\bar \bfz) - \psi (\bfz)]$ satisfies $\Delta (\bar \bfz) \le \epsilon$, where the bounded domain $\cB$ is defined by the initial point $\bfz^0$ and distance parameters $D_i$, $i \in [K]$.
\end{itemize}

\paragraph{Conditionings.}
Let $\bar L_{ij} \triangleq \max \{ L_{ij}, L_{ji} \}$.
Similar to SPs, we hereby define the \emph{cross-coupled conditioning}, denoted by $\sum_{i \in [K]} A_i$ with $A_i \triangleq D_i \sum _{ j \neq i } \bar L_{ij} D_j$, that quantifies the cross-dependencies between the agents over the network. The \emph{diagonal conditioning}, denoted by $\sum_{i \in [K]} B_i$ with $B_i \triangleq L_{ii} D_i^2$, measures the self-dependency within a single agent's domain.

\paragraph{New state-of-the-art communication cost.}
In the multiagent setting, Extragradient (\EG) remains the state-of-the-art method, requiring
\(
T ^{\EG} _{\sP_{\text{\upshape VIP}}} = \cO \bigl( \sum _{i \in [K]} \frac {A_i + B_i} {\epsilon} \bigr)
\)
communication cost, which depends on both the cross-coupled and diagonal conditionings.
By extending our decoupled template to distributed VIPs, we propose \DMvip, and establish a much better communication cost.

\begin{theorem}\label{thm:DMvip-communication-cost-short}
    Consider the \DMvip algorithm for problem class $\sP _{\text{\upshape VIP}}$. We have
    \[
    T ^{\DMvip} _{\sP_{\text{\upshape VIP}}} = \cO \Bigl( \sum _{i \in [K]} A_i / \epsilon \Bigr) .
    \]
\end{theorem}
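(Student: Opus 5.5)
The plan is to repeat the SP argument with a $K$-block assembled norm and a generalized \FDS. Equip $\cE$ with $\Norm{\bfz}_\cE^2 = \sum_{i\in[K]} \alpha_i \Norm{\bfz_i}_i^2$, i.e. $\bfP = \bigoplus_i \alpha_i \bfP_i$. Run \ROM (\cref{alg:Reduced-operator-method-for-VIPS}) on $(V,\psi)$ with constant $\lambda_t \equiv \lambda$. Each MSS is solved by a block-wise \FDS: Agent~$i$ freezes the remote blocks at the anchor $\bfv_{-i}$ and computes an approximate solution $\bfz_i^+$ of the MRN given by
\[
\bigl(V_i(\cdot\,;\bfv_{-i}),\ \psi_i + \tfrac{\alpha_i\lambda}{2}\Norm{\cdot-\bfv_i}_i^2,\ \bfv_i,\ \delta_i\bigr)
\]
to relative accuracy $\delta_i = \alpha_i\lambda/2$. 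The operator of this MRN is $\alpha_i\lambda$-strongly monotone, because $V_i(\cdot\,;\bfz_{-i})$ is monotone in $\bfz_i$ by monotonicity of $V$. Each \ROM iteration uses two communication rounds: one to exchange the $\bfz_i^+$, and one to exchange the $V_i(\bfz^+)$.

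The \ROM analysis is then exactly \cref{lemma:Reduced-gradient-method-for-VIPs}. It gives $a_{t+1}\ge 1/\lambda$, and monotonicity turns the weighted sum into a bound on the restricted gap of the ergodic average:
\[
\Delta(\bar\bfz^T) \le \frac{\lambda}{2T}\sum_i \alpha_i D_i^2 .
\]
Hence
\[
T = \Ceil[\Big]{\frac{\lambda\sum_i \alpha_i D_i^2}{2\epsilon}},
\]
and the communication count is $2T$.

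The key step is the VIP analogue of \cref{lemma:FDS-correctness} (deferred there to \cref{lemma:FDS-correctness-VIP}): I must show that \FDS returns an MSS solution whenever $\lambda$ exceeds a coupling threshold. I would write the MSS residual block-wise as
\[
V_i(\bfz^+) + \psi_i'(\bfz_i^+) + \alpha_i\lambda\bfP_i(\bfz_i^+-\bfv_i) = r_i + \bigl[V_i(\bfz^+) - V_i(\bfz_i^+;\bfv_{-i})\bigr],
\]
where $\Norm{r_i}_{i^*}\le \frac{\alpha_i\lambda}{2}\Norm{\bfz_i^+-\bfv_i}_i$ is the local MRN residual. Changing the remote blocks one at a time and applying block Lipschitz continuity bounds the bracket by $\sum_{j\ne i} L_{ij}\Norm{\bfz_j^+-\bfv_j}_j$. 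Write $u_j = \sqrt{\alpha_j}\,\Norm{\bfz_j^+-\bfv_j}_j$. In the dual assembled norm the total residual is then at most
\[
\Bigl(\sum_i \tfrac{1}{\alpha_i}\Bigl(\tfrac{\alpha_i\lambda}{2}\tfrac{u_i}{\sqrt{\alpha_i}} + \sum_{j\neq i}\tfrac{L_{ij}}{\sqrt{\alpha_j}}\,u_j\Bigr)^2\Bigr)^{1/2} \le \tfrac{\lambda}{2}\Norm{u} + \Norm{C u},
\]
where $C_{ij} = L_{ij}/\sqrt{\alpha_i\alpha_j}$ for $j\ne i$. It therefore suffices that $\lambda \ge 2\bar L_{\texttt c}$ with $\bar L_{\texttt c} \triangleq \Norm{C}_2$.

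I expect the main obstacle to be choosing the $\alpha_i$ so that $\lambda\sum_i\alpha_iD_i^2 = \cO(\sum_i A_i)$. I would take $\alpha_i = \frac{1}{D_i}\sum_{j\ne i}\bar L_{ij}D_j = A_i/D_i^2$, so that $\sum_i \alpha_i D_i^2 = \sum_i A_i$. Then I would bound $\Norm{C}_2$ by a Schur test with weights $w_i = \sqrt{\alpha_i}\,D_i$:
\[
\sum_{j\ne i} C_{ij}\, w_j \le \frac{1}{\sqrt{\alpha_i}}\sum_{j\ne i}\bar L_{ij} D_j = \sqrt{\alpha_i}\,D_i = w_i .
\]
Since $\bar L_{ij}$ is symmetric, the same computation holds for the transpose, so $\Norm{C}_2\le 1$. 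Taking $\lambda = 2$ then yields $T\le 1 + \sum_i A_i/\epsilon$, i.e. $\cO(\sum_i A_i/\epsilon)$ communication rounds. Degenerate agents with $A_i = 0$ are fully decoupled; they can be handled by replacing each $\alpha_i$ with $\alpha_i + \eta$ and letting $\eta \to 0$.

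Finally, I would check that the local MRNs are solvable. Each one is strongly monotone and Lipschitz, so a standard method such as \ARM (via \cref{lemma:exact-accuracy-to-relative-accuracy-under-strong-monotonicity}), or EG if $V_i$ is not a gradient, reaches the required relative accuracy using only local queries of $\sO_i$. This preserves the communication count.
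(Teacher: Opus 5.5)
Your proposal is correct and follows the paper's route. It uses \ROM with constant $\lambda$, a block-wise \FDS that reduces each MSS to $K$ strongly monotone MRNs, two communication rounds per iteration, and the choice $\alpha_i = \frac{1}{D_i}\sum_{j\neq i}\bar L_{ij}D_j$ with $\lambda = 2$, giving $2T \le 2 + 2\sum_i A_i/\epsilon$ exactly as in \cref{thm:DROMbcvip-communication-cost}. The only difference is cosmetic and lies in \cref{lemma:FDS-correctness-VIP}: you bound the coupling by Minkowski's inequality plus a two-sided Schur test on $C_{ij} = L_{ij}/\sqrt{\alpha_i\alpha_j}$, whereas the paper uses $(a+b)^2\le 2a^2+2b^2$ and a $D$-weighted Cauchy--Schwarz to obtain $\bar L_{\texttt{c}}$ in \eqref{eq:coupled-conditioning}; both yield the same threshold $\lambda\ge 2$ for this $\alpha_i$.
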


\begin{remark}
    The communication cost in \cref{thm:DMvip-communication-cost-short} completely drops the dependence on the diagonal conditioning. Consequently, we have substantially improved the state-of-the-art \EG cost when the diagonal conditioning dominates, i.e., when $\sum _i B_i \gg \sum _i A_i$.
\end{remark}

\section{Conclusion and limitations}
\label{sec:conclusion}

This paper studies communication and oracle costs in distributed SPs and VIPs. For the class of SPs, we settle the communication complexity in the distributed setup within gradient-span framework, and consistently improve the long-standing oracle cost of \EG method. For the class of distributed VIPs, we improve the state-of-the-art communication cost. The following directions are not addressed in this paper and are left for future work: (a)~closing the gap of oracle costs; (b)~showing lower bound for non-zero-sum games; and (c)~showing information-theoretic lower bounds for randomized methods.

\bibliographystyle{plainnat} 
\bibliography{reference}

\clearpage
\appendix

\crefalias{section}{appendix} 

\section{Detailed review of existing algorithms}
\label{appendix:existing-algorithms}

In this section, we provide the detailed formulations, distributed trajectories, and formal complexity results for the algorithms summarized in \Cref{sec:existing-algorithms-for-SP}.
We will also show that all the algorithms described below indeed satisfy the gradient-span assumptions in \cref{def:gradient-span}.

We recall that $\sP_{\text{\upshape SP}}^\circ$ denotes the subclass of $\sP_{\text{\upshape SP}}$ where the local components $\psi_x$ and $\psi_y$ are zero. In this setting, the problem reduces to finding a saddle point of a smooth convex-concave function $f$. We note that many algorithms discussed below do not handle general composite functions and only deal with problem instances from this non-composite subclass.

As in \cref{remark:robustness-to-inexact-distance-estimates}, let us consider a practical scenario where the algorithms may not have the precise values of $D_x$ and $D_y$ in advance, but they have access to upper estimates $\hat D_x \ge D_x$ and $\hat D_y \ge D_y$.
Let
\[
    \boxed{
        \theta \triangleq \frac{D_x \hat D_y} {\hat D_x D_y} + \frac{D_y \hat D_x} {\hat D_y D_x},
    }
\]
which quantifies the disproportionality between the true distance parameters and their estimates.

\paragraph{Extragradient (\EG).}

Suppose we apply \EG to a non-composite problem instance $P = (f, 0, 0, \bfz^0) \in \sP ^\circ_{\text{\upshape SP}}$. The method iteratively maintains and updates two sequences of variables $\bfv^k, \bfz^k \in \cE_x \times \cE_y$. Initializing with $\bfv^0 = \bfz^0$, the updates for iteration $k = 0, 1, \dots, K-1$ are given by:
\begin{align*}
    \bfz^{k + 1} &= \bfv^k - \eta^k \bfP^{-1} V^f(\bfv^k) , \\
    \bfv^{k + 1} &= \bfv^k - \eta^k \bfP^{-1} V^f(\bfz^{k + 1}) ,
\end{align*}
where $\eta^k > 0$ is the step size at iteration $k$.

Although the \EG method is conventionally formulated as a $K$-iteration loop, its distributed execution requires $2K$ communication rounds. In each iteration, the agents must synchronize twice to evaluate the coupled partial gradients at $\bfv^k$ and at $\bfz^{k+1}$. Thus, the \EG method corresponds to a distributed gradient-span algorithm with $T = 2K$ rounds and local step lengths $\tau_x^t = \tau_y^t = 1$, where the trajectories are given by:
\[
\hat Z_x^{2k} = \hat Z_y^{2k} = \{ \bfv^k \}
\quad \text{and} \quad
\hat Z_x^{2k+1} = \hat Z_y^{2k+1} = \{ \bfz^{k+1} \} ,
\]
for $k = 0, \dots, K-1$.

To align with \Cref{def:distributed-gradient-based}, the method produces a candidate solution $\bar \bfz^{t+1}$ after each round $t \in \{0, \dots, T-1\}$. Because each iteration requires two communication rounds, the algorithm effectively updates its output only every two rounds. Specifically, after completing iteration $k$ (which corresponds to round $t = 2k+1$), it outputs the ergodic average $\bar \bfz^{2k+2} = \frac{1}{\sum_{i=0}^k \eta^i} \bigl( \sum_{i=0}^k \eta^i \bfz^{i+1} \bigr)$. During the intermediate rounds (at $t = 2k$), it simply retains the previous solution by setting $\bar \bfz^{2k+1} = \bar \bfz^{2k}$ (where $\bar \bfz^0 = \bfz^0$). Since the trajectories and the candidate solutions are formed by linear combinations of the evaluated gradients, they satisfy the span conditions in \cref{def:gradient-span}.

The complexity of the \EG method provides a natural baseline. Translating the classic results into our distributed complexity measures yields the following upper bounds.

\begin{proposition}[{\citealt[Eq.~(6.21)]{juditsky2011first}}]
\label[proposition]{proposition:EG-upper-bound-SP}
    Consider the \EG method applied to $\sP^\circ _{\text{\upshape SP}}$. With the parameter choices of $\alpha_x = \frac{L_x \hat D_x + L_{xy} \hat D_y}{\hat D_x}$, $\alpha_y = \frac{L_y \hat D_y + L_{xy} \hat D_x}{\hat D_y}$, and $\eta^k \equiv 1$, we have
    \[
    \begin{gathered}
    T^{\EG} _{{\sP}^\circ _{\text{\upshape SP}}} \le \theta \cdot \frac{L_{xy} D_x D_y} {\epsilon} + \frac {L_x D_x^2} {\epsilon} + \frac {L_y D_y^2} {\epsilon} , \\
    N^{\EG} _{{\sP}^\circ _{\text{\upshape SP}}} \le (c_x + c_y) \cdot \Bigl( \theta \cdot \frac{L_{xy} D_x D_y} {\epsilon} + \frac {L_x D_x^2} {\epsilon} + \frac {L_y D_y^2} {\epsilon} \Bigr) .
    \end{gathered}
    \]
\end{proposition}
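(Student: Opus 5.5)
The plan is to reduce the statement to the classical extragradient analysis in the assembled norm $\Norm{\cdot}_{\cE}$ of \eqref{eq:assembled-norm-SP}, with $\bfP = \alpha_x \bfP_x \oplus \alpha_y \bfP_y$ and the given $\alpha_x,\alpha_y$. Concretely, I would show that $V^f$ is $1$-Lipschitz from $(\cE,\Norm{\cdot}_{\cE})$ to $(\cE^*,\Norm{\cdot}_{\cE^*})$, invoke the standard \EG bound with $\eta^k\equiv 1$, and then translate iterations into communication rounds and oracle calls.

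\textbf{Step 1: Lipschitz constant in the assembled norm (main obstacle).} Write $a = \Norm{\bfx'-\bfx}_x$ and $b = \Norm{\bfy'-\bfy}_y$. By \labelcref{item:function-smooth},
\[
\Norm{V^f(\bfz')-V^f(\bfz)}_{\cE^*}^2 \le \tfrac{1}{\alpha_x}(L_x a + L_{xy} b)^2 + \tfrac{1}{\alpha_y}(L_{xy} a + L_y b)^2 .
\]
Note that $\alpha_x = L_x + L_{xy}\hat D_y/\hat D_x$. Applying Cauchy--Schwarz with weights $(L_x, L_{xy}\hat D_y/\hat D_x)$ gives
\[
(L_x a + L_{xy} b)^2 \le \alpha_x\bigl(L_x a^2 + L_{xy}\tfrac{\hat D_x}{\hat D_y} b^2\bigr).
\]
Symmetrically, since $\alpha_y = L_y + L_{xy}\hat D_x/\hat D_y$,
\[
(L_{xy} a + L_y b)^2 \le \alpha_y\bigl(L_y b^2 + L_{xy}\tfrac{\hat D_y}{\hat D_x} a^2\bigr).
\]
Adding the two bounds, the right-hand side collapses exactly to $\alpha_x a^2 + \alpha_y b^2 = \Norm{\bfz'-\bfz}_{\cE}^2$. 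Hence $V^f$ is $1$-Lipschitz, so the step $\eta^k = 1$ is admissible. Choosing these Cauchy--Schwarz weights is the only delicate point; the rest of the argument is routine.

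\textbf{Step 2: standard \EG gap bound.} With unit Lipschitz constant and unit steps, the usual extragradient inequality gives, for every $\bfz$,
\[
\langle V^f(\bfz^{k+1}), \bfz^{k+1}-\bfz\rangle \le \tfrac12\Norm{\bfv^k-\bfz}_{\cE}^2 - \tfrac12\Norm{\bfv^{k+1}-\bfz}_{\cE}^2 .
\]
Alternatively, this is \cref{lemma:Reduced-gradient-method-for-VIPs} with the midpoint obtained by one gradient step. Summing over $k$, using convexity--concavity as in the proof of \cref{lemma:DROMsp-communication-complexity} to bound $\Delta(\bar\bfz^{2K})$ by the averaged sum, and maximizing over $\bfz\in\cB$ yields
\[
\Delta(\bar \bfz^{2K}) \le \frac{\alpha_x D_x^2 + \alpha_y D_y^2}{2K}.
\]

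\textbf{Step 3: arithmetic and counting.} Expanding the numerator,
\[
\alpha_x D_x^2 + \alpha_y D_y^2 = L_x D_x^2 + L_y D_y^2 + L_{xy}D_xD_y\Bigl(\tfrac{D_x\hat D_y}{\hat D_x D_y} + \tfrac{D_y \hat D_x}{\hat D_y D_x}\Bigr),
\]
and the bracket is exactly $\theta$. Take $K$ to be the smallest integer for which the Step 2 bound is at most $\epsilon$; then $T = 2K$ rounds suffice, giving the claimed bound on $T^{\EG}$ up to the rounding of $K$, which the statement absorbs. Since each round uses exactly one query to each oracle ($\tau_x^t=\tau_y^t=1$), we get $N_x = N_y = T$. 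Multiplying by the costs gives $N^{\EG} \le (c_x+c_y)\,T$, which is the stated oracle bound.
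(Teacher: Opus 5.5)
Your argument is correct and is the standard extragradient (Mirror-Prox) analysis; the paper gives no proof and simply imports this bound from Juditsky--Nemirovski, Eq.~(6.21). Your weighted Cauchy--Schwarz step in Step~1 makes explicit two things the citation leaves implicit: why the $\hat D$-based weights give $V^f$ a unit Lipschitz constant in the assembled norm, and why $\theta$ appears once the gap is evaluated over the true $D$-balls.

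One caveat concerns the rounding, which the statement cannot ``absorb''. What you actually obtain is $T \le 2\lceil M/(2\epsilon)\rceil < 2 + M/\epsilon$, with $M = \theta L_{xy}D_xD_y + L_xD_x^2 + L_yD_y^2$, and likewise $N^{\EG} < (c_x+c_y)(2 + M/\epsilon)$. The statement as written, with no additive constant, cannot hold literally: for $\epsilon > M$ it would force $T^{\EG} < 1$, although always $T^{\EG} \ge 1$. It should therefore be read as $T^{\EG} \le 2 + M/\epsilon$, the same form as \cref{thm:DROMsp-communication-complexity}.
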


\paragraph{Decoupled GDA.}

Let us apply DGDA to an instance $P \in \sP_{\text{\upshape SP}}^\circ$ with a fixed local trajectory length $\tau$. The algorithm maintains local iterates $\bfx^{t,l}$ and $\bfy^{t,l}$ for round $t = 0, \dots, T-1$ and local step $l = 0, \dots, \tau$. At the beginning of round $t$, the agents synchronize by exchanging their latest local iterates. Specifically, Agent~$x$ receives $\hat \bfy^t$ and Agent~$y$ receives $\hat \bfx^t$, defined as:
\[
\hat \bfx^t \triangleq \begin{cases} \bfx^0 & \text{if } t = 0 \\ \bfx^{t-1, \tau} & \text{if } t > 0 \end{cases}
\qquad \text{and} \qquad
\hat \bfy^t \triangleq \begin{cases} \bfy^0 & \text{if } t = 0 \\ \bfy^{t-1, \tau} & \text{if } t > 0 \end{cases} .
\]
The agents then initialize their local variables for the current round as $\bfx^{t,0} = \hat\bfx^t$ and $\bfy^{t,0} = \hat\bfy^t$. With the remote variables firmly fixed, the agents execute $\tau$ local gradient steps. For $l = 0, \dots, \tau - 1$, the local updates are given by:
\begin{align*}
    \bfx^{t,l+1} &= \bfx^{t,l} - \eta_x \bfP_x^{-1} \nabla_x f( \bfx^{t,l}, \hat\bfy^t ) , \\
    \bfy^{t,l+1} &= \bfy^{t,l} + \eta_y \bfP_y^{-1} \nabla_y f( \hat\bfx^t, \bfy^{t,l} ) ,
\end{align*}
where $\eta_x, \eta_y > 0$ are the local step sizes.

The DGDA algorithm yields the following trajectories:
\[
\hat Z_x^t = \bigl\{ ( \bfx^{t,l}, \hat\bfy^t ) \bigr\}_{l=0}^{\tau-1}
\quad \text{and} \quad
\hat Z_y^t = \bigl\{ ( \hat\bfx^t, \bfy^{t,l} ) \bigr\}_{l=0}^{\tau-1} .
\]
After each round $t \in \{0, \dots, T-1\}$, the method returns the updated local iterates as the candidate solution $\bar\bfz^{t+1} = ( \bfx^{t, \tau}, \bfy^{t, \tau} )$. Because the trajectories and the candidate solutions are formed entirely by linear combinations of the evaluated gradients, DGDA is indeed a distributed gradient-span algorithm by \cref{def:gradient-span}.

While DGDA lies perfectly in our framework, its theoretical guarantees are highly restrictive. The algorithm is only proven to converge for restricted strongly convex-strongly concave problem instances where the cross-coupling between the variables is sufficiently weak~\citep{zindari2025decoupled}. In this narrowly defined regime, DGDA achieves a logarithmic communication complexity of $\cO \bigl( \log \frac{1}{\epsilon} \bigr)$, which is a clear improvement over the \EG baseline. However, for general problem instances in $\sP_{\text{\upshape SP}}^\circ$ with stronger coupling, the delayed remote variables cause the local updates to drift, ultimately leading the method to diverge. Consequently, DGDA fails to provide any meaningful complexity guarantee for the general problem class $\sP _{\text{\upshape SP}}^\circ$ under consideration.

\paragraph{Catalyst acceleration.}

Catalyst methods first add small regularization terms to the objective:
$f (\bfx, \bfy) + \frac{\epsilon}{4 \hat D_x^2} \lVert \bfx - \bfx^0 \rVert_x^2 - \frac{\epsilon}{4 \hat D_y^2} \lVert \bfy - \bfy^0 \rVert_y^2$, reducing the problem to a strongly convex-strongly concave one.
Then, Catalyst introduces an outer loop, indexed by $k = 0, 1, \dots, K-1$, designed to balance the conditioning between the two variables. For instance, when the conditioning of $x$ is worse (i.e. $L_x \hat D_x^2 \ge L_y \hat D_y^2$), the method carefully maintains an extrapolation sequence $(\tilde \bfx^k) _{k=0}^{K-1}$ and, in each outer iteration, adds a proximal term $\frac{\lambda_x}{2} \lVert \bfx - \tilde{\bfx}^k \rVert_x^2$ to the objective. Conversely, if the conditioning of $y$ is worse, the outer loop would instead maintain an extrapolation sequence for $y$ and add a corresponding regularization term for $y$. An inner base algorithm (\EG in this case) is then deployed to solve this regularized subproblem to a specified accuracy.

To simplify the notation, let $L_{\max} = \max \{L_x, L_y, L_{xy}\}$.

\begin{proposition}[{\cite{yang2020catalyst,lan2026novel}}]
\label[proposition]{proposition:catalyst-upper-bound-SP}
    Consider the Catalyst framework equipped with the \EG method as the inner solver, denoted by Cat-EG, applied to $\sP_{\text{\upshape SP}}^\circ$. Then, we have:
    \[
    \begin{gathered}
    T^{\text{\upshape Cat-EG}} _{\sP_{\text{\upshape SP}}^\circ} = \cO \biggl( \biggl( \frac{L_{\max} \hat D_x \hat D_y}{\epsilon} + \sqrt { \frac {L_x \hat D_x^2 } {\epsilon} } + \sqrt { \frac {L_y \hat D_y^2} {\epsilon} } \biggr) \log^2 \Bigl( \frac{1}{\epsilon} \Bigr) \biggr), \\
    N^{\text{\upshape Cat-EG}} _{\sP_{\text{\upshape SP}}^\circ} = (c_x + c_y) \cdot \cO \biggl( \biggl( \frac{L_{\max} \hat D_x \hat D_y}{\epsilon} + \sqrt { \frac {L_x \hat D_x^2 } {\epsilon} } + \sqrt { \frac {L_y \hat D_y^2} {\epsilon} } \biggr) \log^2 \Bigl( \frac{1}{\epsilon} \Bigr) \biggr) .
    \end{gathered}
    \]
\end{proposition}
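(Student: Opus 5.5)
The plan is not to reprove Catalyst from scratch. Instead, I would take the known iteration bounds for Cat-EG from \cite{yang2020catalyst,lan2026novel} and translate them into the distributed accounting of \cref{sec:distributed-gradient-based-algorithms}. The translation rests on one structural fact, established exactly as for \EG above: every inner \EG iteration uses two communication rounds, each agent makes one oracle query per round, and all iterates are linear combinations of observed partial gradients. So if $K_{\text{tot}}$ denotes the total number of inner \EG iterations over all outer Catalyst steps, then $T^{\text{Cat-EG}} \le 2K_{\text{tot}}$ and $N^{\text{Cat-EG}} \le (c_x+c_y)\cdot 2K_{\text{tot}}$. Both claimed bounds therefore follow from a single estimate $K_{\text{tot}} = \cO\bigl( ( \frac{L_{\max}\hat D_x\hat D_y}{\epsilon} + \sqrt{L_x\hat D_x^2/\epsilon} + \sqrt{L_y\hat D_y^2/\epsilon}) \log^2(1/\epsilon)\bigr)$.

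To bound $K_{\text{tot}}$, I would proceed in three steps.
\begin{enumerate}
\item \textbf{Regularization.} Set $\mu_x = \epsilon/(2\hat D_x^2)$ and $\mu_y = \epsilon/(2\hat D_y^2)$ and add $\frac{\mu_x}{2}\|\bfx-\bfx^0\|_x^2 - \frac{\mu_y}{2}\|\bfy-\bfy^0\|_y^2$ to $f$. Since $\hat D_x\ge D_x$ and $\hat D_y \ge D_y$, the restricted gap over $\cB$ changes by at most $\epsilon/2$. It therefore suffices to solve the resulting strongly convex--strongly concave problem to accuracy $\epsilon/2$, which reduces to reaching a distance accuracy that is polynomial in $\epsilon$.
\item \textbf{Outer loop.} Assume without loss of generality that $L_x\hat D_x^2 \ge L_y\hat D_y^2$, and choose the proximal weight $\lambda_x$ on $\bfx$. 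The accelerated proximal point analysis in \cite{yang2020catalyst} gives linear convergence with $\cO(\sqrt{(\lambda_x+\mu_x)/\mu_x}\,\log(1/\epsilon))$ outer iterations, provided each subproblem is solved to a relative accuracy that shrinks geometrically.
\item \textbf{Inner loop.} Each subproblem is $(\lambda_x+\mu_x)$-strongly convex in $\bfx$ and $\mu_y$-strongly concave in $\bfy$. Warm-started \EG solves it in $\cO\bigl(\frac{L_{\max}+\lambda_x}{\min\{\lambda_x+\mu_x,\mu_y\}}\log(1/\epsilon)\bigr)$ iterations. The warm start keeps the initial distance within a bounded factor of the target, so the inner logarithm stays of order $\log(1/\epsilon)$.
\end{enumerate}

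Next, I would choose $\lambda_x$ to balance the product of the outer and inner counts. Taking $\lambda_x \approx \max\{\mu_y, L_y\}$, suitably scaled by $\hat D_x,\hat D_y$, gives the following. The outer factor becomes $\sqrt{L_x\hat D_x^2/\epsilon}$ in the diagonal-dominated regime. The product yields the cross term $L_{\max}/\sqrt{\mu_x\mu_y} = 2L_{\max}\hat D_x\hat D_y/\epsilon$. Multiplying the two logarithms gives the $\log^2$ factor. The symmetric case contributes the $\sqrt{L_y\hat D_y^2/\epsilon}$ term.

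I expect the main obstacle to be this balancing step together with the inexactness control. One must check that, uniformly over the possible conditioning regimes, the outer-times-inner product is dominated by the sum of the three stated terms rather than by a larger cross product such as $\sqrt{L_x L_{\max}}\,\hat D_x\hat D_y/\epsilon$. One must also verify that the warm-start criterion from \cite{lan2026novel} keeps the inner logarithm at $\cO(\log(1/\epsilon))$. For both points I would rely directly on the parameter choices and inexactness criterion of those references, and only add the round-counting translation described in the first paragraph.
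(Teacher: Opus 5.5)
Your top-level plan matches the paper, which gives no argument for this proposition beyond citing \cite{yang2020catalyst,lan2026novel}. The distributed translation you add is sound. Each inner EG iteration costs two rounds and one query per agent per round, and the prox and extrapolation steps on $\bfx$ are local to Agent~$x$. The regularization with $\mu_x=\epsilon/(2\hat D_x^2)$, $\mu_y=\epsilon/(2\hat D_y^2)$ costs at most $\epsilon/2$ in the restricted gap.

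The problem is the self-contained estimate of $K_{\text{tot}}$: as written, it does not yield the stated bound. First, your choice $\lambda_x\approx\max\{\mu_y,L_y\}$ fails in the generic case $L_y>\mu_y$ (recall $\mu_y$ is of order $\epsilon$). With your own counts, the outer factor is $\sqrt{L_y/\mu_x}$, not $\sqrt{L_x/\mu_x}$. The outer-times-inner product is then of order $\frac{L_{\max}}{\sqrt{\mu_x\mu_y}}\sqrt{L_y/\mu_y}$, an extra factor $\hat D_y\sqrt{2L_y/\epsilon}$ over the target. With a Euclidean inner EG, the right choice is that of \cite{yang2020catalyst}: $\lambda_x+\mu_x=\mu_y$, with the prox on the variable with the smaller modulus. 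This gives $L_{\max}/\sqrt{\mu_x\mu_y}$. Moreover, when $\mu_x,\mu_y\le L_{\max}$ one has $\sqrt{L_x/\mu_x}\le\sqrt{L_{\max}/\mu_x}\le L_{\max}/\sqrt{\mu_x\mu_y}$, and symmetrically for $\bfy$. So the two square-root terms are absorbed into the cross term. They matter only in the corner regime where, say, $\mu_y>L_{\max}$, i.e.\ $L_{\max}\hat D_y^2<\epsilon/2$.

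That corner regime is the real obstacle. The product $\sqrt{L_xL_{\max}}\,\hat D_x\hat D_y/\epsilon$ you flag is not one, since it is dominated by $L_{\max}\hat D_x\hat D_y/\epsilon$. If $\mu_y>L_{\max}$, your inner count $\frac{L_{\max}+\lambda_x}{\min\{\lambda_x+\mu_x,\mu_y\}}$ times the outer count $\sqrt{(\lambda_x+\mu_x)/\mu_x}$ is $\Omega(\sqrt{L_{\max}/\mu_x})$ for every $\lambda_x\ge0$, and putting the prox on $\bfy$ is worse. For $L_x,L_y\ll L_{xy}=L_{\max}$ and $\hat D_y\to0$, this is $\sqrt{L_{\max}\hat D_x^2/\epsilon}$, unboundedly larger than the three stated terms.

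To obtain $\sqrt{L_x\hat D_x^2/\epsilon}$, run the inner EG in the block-weighted norm with weights $(\lambda_x+\mu_x,\mu_y)$. Its iteration count is $\cO\bigl(\frac{L_x+\lambda_x+\mu_x}{\lambda_x+\mu_x}+\frac{L_{xy}}{\sqrt{(\lambda_x+\mu_x)\mu_y}}+\frac{L_y+\mu_y}{\mu_y}\bigr)\log\frac1\epsilon$. Combine it with the balancing choice $\lambda_x+\mu_x=\frac{L_x\mu_y}{L_y+\mu_y}$, which equalizes the two diagonal condition numbers. This choice is admissible ($\ge\mu_x$) exactly when $L_x\hat D_x^2\ge L_y\hat D_y^2+\epsilon/2$, matching the case split in the paper's description. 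Otherwise, use the symmetric construction on $\bfy$, or plain weighted EG. The product is then $\cO\bigl(\frac{\sqrt{L_xL_y}+L_{xy}}{\sqrt{\mu_x\mu_y}}+\sqrt{L_x/\mu_x}\bigr)$. This is the claimed bound after substituting $\mu_x,\mu_y$ and multiplying by the two logarithms. So if you defer the balancing step to the references, make sure the result you invoke is this block-scaled one; the Euclidean analysis you wrote down cannot deliver it.
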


Now, let us explain the caveats we mentioned in \cref{sec:existing-algorithms-for-SP} regarding the Cat-EG method in more detail.

To explain the second caveat of Cat-EG, its sensitivity to inexact diameter estimates, we compare it with the \EG baseline and our proposed method. Because Catalyst uses $\hat D_x$ and $\hat D_y$ to set the initial regularization, its complexity scales directly with these estimates rather than the true distances $D_x$ and $D_y$. Specifically, the diagonal terms in \EG or \DMsp depend strictly on the true distances, whereas in Cat-EG they scale with $\hat D_x$ and $\hat D_y$. Similarly, for the cross-coupled term, \EG or \DMsp depends on the true distances multiplied by the proportionality ratio $\theta$. If the estimates are loose but proportional (e.g., $\hat D_x = c D_x$ and $\hat D_y = c D_y$), $\theta$ remains $2$, leaving the complexity unaffected by the overestimation factor $c$. In contrast, the Cat-EG coupled term scales with $\hat D_x \hat D_y$, meaning any overestimation of $\hat D_x \gg D_x$ or $\hat D_y \gg D_y$ directly inflates the bound. Consequently, \EG and our proposed \DMsp method are much more robust to inexact distance estimates, provided the estimates are roughly proportional.

Furthermore, we note that the fourth caveat of Cat-EG (performing worse than unaccelerated \EG) can be easily verified. For instance, consider a problem instance where $L_x = 10^{6}$, $L_{xy} = 1$, $L_y = 10^{-6}$, $D_x = 10^{-3}$, and $D_y = 10^{3}$. Under this conditioning, the theoretical upper bound of Cat-EG significantly exceeds that of standard \EG.

\paragraph{Four-loop method.}

The Cat-Cat-DAGDA method~\citep{wang2020improved} first adds small $\cO(\epsilon)$ regularizations to both $\bfx$ and $\bfy$ to ensure the objective is strongly convex-strongly concave. It then executes four nested loops, which justifies our naming convention:
(i)~The first loop is a Catalyst outer loop adding a proximal regularization term to the primal variable;
(ii)~The second loop is another Catalyst outer loop adding a proximal regularization term to the dual variable;
(iii)~The third loop manages communication by exchanging and freezing the remote variables, identical to DGDA; and
(iv)~The fourth loop performs local computations, but unlike the standard gradient steps in DGDA, it employs an accelerated gradient method (hence DAGDA) to solve the inner subproblems.

While the fourth loop of the method is a local computation loop, the first three loops all require communication rounds. Let us state the complexity results of this method without going into the tedious details of the algorithm.

\begin{proposition}[{\cite{wang2020improved}}]
\label[proposition]{proposition:PBR-upper-bound-SP}
    Consider the Cat-Cat-DAGDA method applied to $\sP_{\text{\upshape SP}}^\circ$. Then, we have:
    \[
    \begin{gathered}
    T^{\text{\upshape Cat-Cat-DAGDA}} _{\sP_{\text{\upshape SP}}^\circ} = \cO \biggl( \frac{{L_{xy}} \hat D_x \hat D_y}{\epsilon} \log^3 \Bigl( \frac{1}{\epsilon} \Bigr) \biggr) , \\
    N^{\text{\upshape Cat-Cat-DAGDA}} _{\sP_{\text{\upshape SP}}^\circ} = (c_x + c_y) \cdot \cO \biggl( \biggl( \frac{\sqrt{L_{\max} L_{xy}} \hat D_x \hat D_y}{\epsilon} + \sqrt { \frac {L_x \hat D_x^2 } {\epsilon} } + \sqrt { \frac {L_y \hat D_y^2} {\epsilon} } \biggr) \log^4 \Bigl( \frac{1}{\epsilon} \Bigr) \biggr) .
    \end{gathered}
    \]
\end{proposition}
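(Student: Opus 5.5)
The plan is to obtain this by specializing the analysis of \cite{wang2020improved} to our distributed cost model, and to read off the round count and the query count loop by loop. First I would reduce $\sP_{\text{\upshape SP}}^\circ$ to a strongly convex--strongly concave problem. Set
\[
\tilde f(\bfx,\bfy) = f(\bfx,\bfy) + \frac{\mu_x}{2}\Norm{\bfx-\bfx^0}_x^2 - \frac{\mu_y}{2}\Norm{\bfy-\bfy^0}_y^2,
\qquad \mu_x = \frac{\epsilon}{2\hat D_x^2},\quad \mu_y = \frac{\epsilon}{2\hat D_y^2}.
\]
On $\cB$ the added terms are at most $\epsilon/4$ each, because $\hat D_x \ge D_x$ and $\hat D_y \ge D_y$. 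Hence an $\epsilon/2$-saddle point of $\tilde f$, measured by the restricted gap, is an $\epsilon$-saddle point of $f$. The rest of the argument is a complexity count for $\tilde f$ in terms of $\mu_x,\mu_y$. These equal $\epsilon/\hat D^2$ up to constants, which is exactly why the final bounds involve $\hat D_x\hat D_y$ rather than $D_x D_y$.

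Second, I would analyze the innermost pair of loops (iii)--(iv). Consider a subproblem whose strong convexity and concavity moduli have been raised by the two Catalyst prox terms to $\tau_x \ge \mu_x$ and $\tau_y \ge \mu_y$. If $\tau_x\tau_y \ge c L_{xy}^2$ for a suitable absolute constant $c$, the subproblem is weakly coupled. In that regime the freeze-and-exchange map of DGDA/DAGDA is a contraction in the norm $\sqrt{\tau_x\Norm{\cdot}_x^2+\tau_y\Norm{\cdot}_y^2}$, with contraction factor bounded away from $1$. This is the same mechanism as \cref{lemma:FDS-correctness}: each frozen-remote subproblem is solved exactly or accurately enough, and the coupling error is controlled by $L_{xy}/\sqrt{\tau_x\tau_y}$. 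So $\cO(\log(1/\epsilon))$ communication rounds suffice. In each round the local accelerated solver needs $\cO(\sqrt{L_x/\tau_x}\log(1/\epsilon))$ queries to $\sO_x$ and $\cO(\sqrt{L_y/\tau_y}\log(1/\epsilon))$ queries to $\sO_y$.

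Third, I would count the two Catalyst loops. The $y$-Catalyst loop (ii) makes $\cO(\sqrt{\tau_y/\mu_y}\log(1/\epsilon))$ outer iterations, and the $x$-Catalyst loop (i) makes $\cO(\sqrt{\tau_x/\mu_x}\log(1/\epsilon))$. Choose $\tau_x\tau_y \asymp L_{xy}^2$, with $\tau_x \ge \mu_x$ and $\tau_y \ge \mu_y$, balanced against $L_x,L_y$ as in \cite{wang2020improved}. The product of the three communicating loops then gives
\[
\sqrt{\frac{\tau_x\tau_y}{\mu_x\mu_y}}\,\log^3\frac1\epsilon \;\asymp\; \frac{L_{xy}}{\sqrt{\mu_x\mu_y}}\log^3\frac1\epsilon \;=\; \cO\Bigl(\frac{L_{xy}\hat D_x\hat D_y}{\epsilon}\log^3\frac1\epsilon\Bigr).
\]
Multiplying further by the local query counts, and taking the balanced choice (for example $\tau_x \asymp \max\{\mu_x, L_{xy}\sqrt{\mu_x/\mu_y}\}$, capped by $L_x$), reproduces the bound of \cite{wang2020improved}:
\[
\sqrt{\frac{L_x}{\mu_x}+\frac{L_{\max}L_{xy}}{\mu_x\mu_y}+\frac{L_y}{\mu_y}}\;\log^4\frac1\epsilon .
\]
Substituting $\mu_x$ and $\mu_y$ yields the stated $N$. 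The costs $c_x,c_y$ enter only as the common factor $(c_x+c_y)$, since both agents take the same number of steps up to constants.

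The main obstacle is the inexactness bookkeeping across the nested Catalyst loops. Each inner solve must meet a relative accuracy that shrinks geometrically, and warm starts are needed so that each inner call costs only a logarithmic factor rather than a polynomial one. This bookkeeping is where every $\log(1/\epsilon)$ comes from, and it fixes their total number: three for communication and four for oracle calls. Rather than redo it, I would invoke the corresponding theorem of \cite{wang2020improved} directly. I would check only two things: that every loop except (iv) requires a synchronization, so that rounds are counted by loops (i)--(iii); and that their gradient counts split into $\sO_x$ and $\sO_y$ queries as above.
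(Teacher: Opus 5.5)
Your proposal matches the paper's treatment: the paper gives no derivation of its own. It imports the bounds from \cite{wang2020improved} and observes that loops (i)--(iii) require synchronization while loop (iv) is local, which is exactly the bookkeeping you rely on. One slip in your heuristic sketch is harmless, since you defer to that theorem, but worth fixing: the example choice $\tau_x \asymp L_{xy}\sqrt{\mu_x/\mu_y}$ can lose a polynomial factor $(\mu_y/\mu_x)^{1/4} = (\hat D_x/\hat D_y)^{1/2}$ in the $\sO_x$ count. Taking both prox levels of order $L_{xy}$ (since $\mu_x,\mu_y \ll L_{xy}$ for small $\epsilon$) keeps $\tau_x\tau_y \asymp L_{xy}^2$ for the round count and yields the $\sqrt{L_{\max}L_{xy}}$ term. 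Likewise, the common factor $(c_x+c_y)$ follows from $c_xN_x+c_yN_y \le (c_x+c_y)(N_x+N_y)$, not from the agents taking equal numbers of steps.
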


\section{MRN solver: Accumulative Regularization Method}
\label{sec:MRN-solver-ARM}

\begin{algorithm}[H]
\caption{$\ARM \bigl( \nabla f_w, \psi_w, \bfv_w, \xi \mid L \bigr)$}\label{alg:accumulative_regularization}
\begin{algorithmic}[1]
\State Set $\tau = 2 + \max \bigl\{ 0, \bigl\lceil \log_4 \bigl( \frac{3 L}{2 \xi} \bigr) \bigr\rceil \bigr\}$ and $\sigma^{(0)} = 0$.
\State Set $\sigma^{(k)} = 4^{k-3} \frac {2\xi} {3}$ and $N_k = \Bigl\lceil 16 \sqrt{\frac{ L}{\sigma^{(k)}}} \Bigr\rceil$ for $k = 1, \dots, \tau$.
\State Initialize $\bar{\bfw}^{(0)} = \bfw^{(0)} = \bfv_w$.
\For{$k = 1, \dots, \tau$}
    \State $\gamma^{(k)} = 1 - \frac{\sigma^{(k-1)}}{\sigma^{(k)}}$
    \State $\bar{\bfw}^{(k)} = (1 - \gamma^{(k)})\bar{\bfw}^{(k-1)} + \gamma^{(k)} \bfw^{(k-1)}$

    \vspace{0.1cm}
    \Statex \hspace{0.45cm} \emph{\% Begin Inner Subroutine: Nesterov's Accelerated Gradient Method}
    \State Initialize $\bfx_k^{(0)} = \bfw^{(k-1)}$, $\bfy_k^{(0)} = \bfw^{(k-1)}$, $t_0 = 1$, and $L_k = L + \sigma^{(k)}$.
    \For{$i = 0, \dots, N_k - 1$}
        \State $\nabla f _w ^{(k)} (\bfy_k^{(i)}) = \nabla f _w (\bfy_k^{(i)}) + \sigma^{(k)} (\bfy_k^{(i)} - \bar{\bfw}^{(k)})$
        \State \label{line:xx-update}$\bfx_k^{(i+1)} = \argmin_{\bfw \in \dom \psi_w}\ \bigl\{ \InnerProduct {\nabla f_w ^{(k)}(\bfy_k^{(i)})} {\bfw} + \frac{L_k}{2} \Norm{\bfw - \bfy_k^{(i)}}_w^2 + \psi_w (\bfw) \bigr\}$
        \State $t_{i+1} = \frac{1 + \sqrt{1 + 4t_i^2}}{2}$
        \State $\bfy_k^{(i+1)} = \bfx_k^{(i+1)} + \frac{t_i - 1}{t_{i+1}} (\bfx_k^{(i+1)} - \bfx_k^{(i)})$
    \EndFor
    \Statex \hspace{0.45cm} \emph{\% End Inner Subroutine}
    \vspace{0.1cm}

    \State $\bfw^{(k)} = \bfx_k^{(N_k)}$
    \State $\psi _w ^\prime(\bfw^{(k)}) = - \nabla f _w ^{(k)} (\bfy_k^{(N_k - 1)}) - L_k (\bfx_k^{(N_k)} - \bfy_k^{(N_k - 1)})$
\EndFor
\State \Return $(\bfw^{(\tau)}, \psi _w ^\prime(\bfw^{(\tau)}))$
\end{algorithmic}
\end{algorithm}

\begin{proof}[Proof of \cref{lemma:making-the-gradient-norm-small-full}]
    It was originally shown in \cite[Theorem~3.1]{lan2023optimal} that $\ARM$ takes no more than $34 \sqrt { \frac {3 L} {2 \xi} } $ gradient queries and obtains $\bfw^\prime \in \dom \psi _w$, such that there exists $\tilde \bfw \in \dom \psi _w$ with $\nabla f _w (\tilde \bfw) \in - \partial \psi _w (\tilde \bfw)$, and
    \[
    \Norm{ 2L (\bfw^{+} - \bfw^\prime) } _{w} \le \frac {2} {3} \xi \Norm {\bfv_w - \tilde \bfw} _{w}, \text{ where }
    \bfw^{+} = \argmin _{\bfw \in \dom \psi _w} \ \bigl[ \InnerProduct { \nabla f _w (\bfw^\prime)} {\bfw } + \psi _w (\bfw) + L \lVert \bfw - \bfw^\prime \rVert _{w} ^2 \bigr] .
    \]
    By the optimality of $\bfw^{+}$, there exists $\psi^\prime _w (\bfw^+) \in \partial \psi _w (\bfw^+)$ such that
    \[
    \nabla f _w (\bfw^\prime) + \psi^\prime _w (\bfw^{+}) + 2 L \bfP (\bfw^{+} - \bfw^\prime) = \0 .
    \]
    Then, we have
    \[
    \begin{aligned}
    &\quad \Norm {\nabla f _w (\bfw^+) + \psi _w ^\prime(\bfw^+) } _{w^*} \\
    &\le \Norm { \nabla f _w (\bfw^\prime) + \psi^\prime _w (\bfw^{+}) } _{w^*} + \Norm { \nabla f _w (\bfw^+) - \nabla f _w(\bfw^\prime)} _{w^*} \\
    &= \Norm {2 L \bfP (\bfw^{+} - \bfw^\prime)} _{w^*} + \Norm { \nabla f_w (\bfw^+) - \nabla f_w (\bfw^\prime)} _{w^*} \\
    &\le 3 L \Norm { \bfw^+ - \bfw^\prime } _{w} \\
    &\le \xi \Norm {\bfv_w - \tilde \bfw} _{w} .
    \end{aligned}
    \]
\end{proof}

\section{Monotone composite variational inequality problems}
\label{sec:monotone-block-VIPs-appendix}

In this section, we study variational inequality problems~(VIPs)~\citep{nemirovski2004prox,juditsky2011first}, a generalization of SPs that captures, for instance, multiplayer general-sum games.

\subsection{Problem formulation}

\paragraph{VIPs~(with separable composite terms).}
Let us consider the VIP in \cref{eq:composite-VIP}, where $\cE = \cE_1 \times \cdots \times \cE_K$ is the direct product of $K$ finite-dimensional real vector spaces.
For all $i \in [K]$: let the mapping $V_i \colon \dom \psi \to \cE_i^*$, and let the function $\psi_i \colon \cE_i \to \R \cup \{ + \infty \}$.
We consider the decomposition of $V(\bfz) = \bigl( V_1(\bfz), \cdots, V_K (\bfz) \bigr)$ and $\psi (\bfz) = \psi_1 (\bfz_1) + \cdots + \psi_K (\bfz_K)$, for all $\bfz = (\bfz_1, \cdots, \bfz_K) \in \cE$.
Moreover, we denote $\dom \psi = \dom \psi_1 \times \cdots \times \dom \psi_K \triangleq Q$.

\paragraph{Assumptions for VIPs.} Let us make the following assumptions:
\begin{enumerate}[label={\textnormal{(A\arabic*')}}]
    \setcounter{enumi}{\value{VIP-assumptions}}
    \item  \label{item:bounded-distance-to-block-solution}Let $\bfz^0 = (\bfz^0_1, \cdots, \bfz^0_K) \in Q$ be a given point. There exists $\bfz^* = (\bfz_1^*, \cdots, \bfz_K^*) \in Q$ in the solution set of the VIP of $(V, \psi)$, such that for all $i \in [K]$: $\bfz_i^* \in \cB_i$, where $\cB_i \triangleq \{ \bfz_i \in \cE_i \mid \Norm {\bfz_i^0 - \bfz_i}_i \le D_i \}$ and $D_i > 0$ is a given distance.
    \item \label{item:operator-block-Lipschitz-continuity}The operator $V_i (\bfz_j; \bfz_{-j})$ is $L_{ij}$-Lipschitz continuous in $\bfz_j \in \dom \psi_j$ for any fixed $\bfz_{-j} \in \dom \psi_1 \times \cdots \times \dom \psi_{j-1} \times \dom \psi_{j+1} \times \cdots \times \dom \psi_K$.
    \footnote{For all $\bfz = (\bfz_1, \cdots, \bfz_K) \in Q$, we use the following notations for simplicity: $(\bfz_j; \bfz_{-j}) \triangleq \bfz$ and $\bfz_{-j} \triangleq (\bfz_1, \cdots, \bfz_{j-1}, \bfz_{j+1}, \cdots, \bfz_K )$.}
    \setcounter{Block-VIP-assumptions}{\value{enumi}}
\end{enumerate}
Let the operator family $\sF _{\text{\upshape VIP}}$ be comprised of all the operators with initialization points $\bigl( (V_i) _{i \in [K]}, \bfz^0 \bigr) \in \sP _{\text{\upshape VIP}}$, such that Assumptions~\labelcref{item:a-monotone-operator-and-a-convex-function,item:operator-Lipschitz-continuity,item:bounded-distance-to-block-solution} are satisfied.

\paragraph{Notations.}
To simplify the notations, let us denote $\bfz \triangleq (\bfz_1, \cdots, \bfz_K) \in Q$ in the context of VIPs. Let us denote
\[
\bar L_{ij} \triangleq \max \{ L_{ij}, L_{ji} \}, \
A_i \triangleq D_i \Bigl( \sum _{ j \in [K] \setminus \{ i \} } \bar L_{ij} D_j \Bigr), \text{ and } B_i \triangleq \bar L_{ii} D_i^2, \text{ for all } i, j \in [K].
\]
We refer to $\sum _i A_i$ as the cross-coupled conditioning and $\sum _i B_i$ as the diagonal conditioning, and we say that the diagonal conditioning dominates when $\sum _i B_i \gg \sum _i A_i$.

\subsection{Communication and computational costs}

\paragraph{Distributed oracles.}

We consider a distributed setting with $K$ agents very similar to the one in \cref{sec:problem-class,sec:distributed-gradient-based-algorithms}. For all $i \in [K]$: Agent~$i$ controls decision variable $\bfz_i \in \dom \psi_i$, has direct access to the function $\psi_i$, and has access to the oracle $\sO _i (\bfz) = V_i (\bfz)$ for $\bfz \in Q$.
We consider per query to $\sO_i$ costs $c_i \ge 0$, $i \in [K]$.

\paragraph{Accuracy measure.}
We consider the following accuracy measure for VIPs:
\[
\Delta (\bar \bfz) \triangleq \sup _{ \bfz \in \cB \cap Q} \InnerProduct { V (\bfz) }{ \bar \bfz - \bfz } + \psi (\bar \bfz) - \psi (\bfz) , \text{ for all } \bfz \in Q ,
\]
where $\cB \triangleq \cB_1 \times \cdots \times \cB_K$.
We say that a point $\bar \bfz \in Q$ is an \emph{$\epsilon$-approximate solution} of the VIP if
\(
\Delta (\bar \bfz) \le \epsilon .
\)
Our goal is to find such an $\epsilon$-approximate solution for any $\epsilon > 0$.

\paragraph{Problem class.}
We formally define the overall \emph{problem class}, denoted by $\sP_{\text{\upshape VIP}} \bigl( \sF _{\text{\upshape VIP}}, (\sO_i) _{i \in [K]}, \epsilon \bigr)$, or for short $\sP_{\text{\upshape VIP}}$. A specific problem instance $P \in \sP_{\text{\upshape VIP}}$ is constructed by drawing an operator instance (with initial point) $V$ from $\sF _{\text{\upshape VIP}}$, equipping it with the distributed oracles $(\sO_i) _{i \in [K]}$, and specifying a target accuracy $\epsilon > 0$. Solving the instance $P$ requires an algorithm to output an $\epsilon$-approximate solution of $V$ utilizing the distributed oracles.

\paragraph{Distributed algorithms for VIPs, communication and oracle costs.}

To provide the formal definitions of distributed algorithms for the problem class $\sP_{\text{\upshape VIP}}$, we generalize the information-based framework to $K$ agents. Every query point, message, and output is generated as a deterministic mapping of the information available to the agent at that step.

Suppose an algorithm $\sM$ proceeds in $T$ rounds. In each round $t \in \{0, \dots, T-1\}$, each Agent~$i \in [K]$ executes multiple local computational steps to generate local query points denoted by
\[
\bfz_i^{t,l} = (\bfz_{i,1}^{t, l}, \dots, \bfz_{i,i}^{t, l}, \dots, \bfz_{i,K}^{t, l}) \quad \text{for } l \in \{0, \dots, \tau_i^t - 1\} ,
\]
where $\bfz_{i,i}^{t,l}$ is the local variable updated by Agent~$i$, and $\bfz_{i,j}^{t, l}$ (for $j \neq i$) is the delayed approximation of Agent~$j$'s variable utilized by Agent~$i$.

Let $I_i^{t,l}$ denote the accumulated information sequence available to Agent~$i$ prior to making its $(l+1)$-th local oracle query in round $t$. The base case at initialization is $I_i^{0,0} = (\psi_i, \bfz^0)$. During the local computational steps $l \in \{0, \dots, \tau_i^t - 1\}$, the information sequence of Agent~$i$ updates sequentially by appending the newly acquired oracle response for $V_i$:
\[
I_i^{t, l+1} = \bigl( I_i^{t, l}, V_i(\bfz_i^{t, l}) \bigr) .
\]
After the local steps in round $t$, the agents exchange messages. Let $M_{j \to i}^t$ denote the message sent from Agent~$j$ to Agent~$i$. The information sequence available to Agent~$i$ at the beginning of round $t+1$ appends the received messages to its prior local history:
\[
I_i^{t+1, 0} = \bigl( I_i^{t, \tau_i^{t}}, \{ M_{j \to i}^t \}_{j \neq i} \bigr) .
\]

\begin{definition}
\label{def:distributed-vip-algorithm}
An algorithm $\sM$ is called a \emph{distributed algorithm} for problem class $\sP_{\text{\upshape VIP}}$ if, when applied to any instance $P \in \sP_{\text{\upshape VIP}}$, its execution satisfies the following conditions for all $t \in \{0, \dots, T-1\}$ and agents $i \in [K]$:
\begin{enumerate}
    \item \textbf{Local Computation:} The query points are determined entirely by the locally available information. For all local steps $l \in \{0, \dots, \tau_i^t - 1\}$, there exists a deterministic mapping function $\mathcal{A}_i^{t,l}$ such that:
    \[
        \bfz_i^{t,l} = \mathcal{A}_i^{t,l} \bigl( I_i^{t,l} \bigr) .
    \]

    \item \textbf{Communication:} The messages exchanged are produced by deterministic mappings of the sender's local information. For any $j \neq i$, there exists a mapping function $\mathcal{M}_{i \to j}^t$ such that:
    \[
        M_{i \to j}^t = \mathcal{M}_{i \to j}^t\bigl( I_i^{t, \tau_i^t} \bigr) .
    \]

    \item \textbf{Candidate solution:} The candidate solutions $\bar \bfz^{t+1} = (\bar \bfz_1^{t+1}, \dots, \bar \bfz_K^{t+1})$ are constructed from the respective agents' updated information sets. There exists a deterministic mapping function $\bar{\mathcal{A}}_i^{t+1}$ such that:
    \[
        \bar \bfz_i^{t+1} = \bar{\mathcal{A}}_i^{t+1} \bigl( I_i^{t+1, 0} \bigr) .
    \]
\end{enumerate}
\end{definition}

For a given instance $P \in \sP_{\text{\upshape VIP}}$ and a target accuracy $\epsilon > 0$, we define the \emph{communication cost} required by a distributed algorithm $\sM$ on $P$, denoted by $T^\sM_P$, as the smallest integer $k \in \{ 1, \dots, T \}$ such that the candidate solution $\bar \bfz^k$ satisfies the target accuracy $\epsilon$.

The total number of local oracle queries evaluated by Agent~$i$ for instance $P$ up to this point is given by the cumulative number of local steps taken, denoted by $N_{i,P}^\sM = \sum_{r=0}^{T^\sM_P-1} \tau_i^r$.

Let $c_i$ denote the computational cost of evaluating a single partial oracle $V_i$. The \emph{communication cost} and \emph{oracle cost} of algorithm $\sM$ over the entire problem class $\sP_{\text{\upshape VIP}}$ are defined by taking the supremum over all instances:
\[
T^\sM_{\sP_{\text{\upshape VIP}}} = \sup_{P \in \sP_{\text{\upshape VIP}}} T^\sM_P
\quad \text{and} \quad
N_{\sP_{\text{\upshape VIP}}}^\sM = \sup_{P \in \sP_{\text{\upshape VIP}}}\ \Bigl( \sum_{i \in [K]} c_i N_{i,P}^\sM \Bigr) .
\]

We first state the classic results of the \EG method in \cref{proposition:EG-upper-bound-block-VIPs}, which remains the state-of-the-art communication complexity bound.
\begin{proposition}[{\citealt[Eq.~(6.21)]{juditsky2011first}}]
\label[proposition]{proposition:EG-upper-bound-block-VIPs}
    For any target accuracy $\epsilon > 0$, the communication cost of \EG is bounded by
    \[
    \cO \bigl( \sum _{i \in [K]} \frac {A_i + B_i} {\epsilon} \bigr) ,
    \]
    and the computational cost of \EG is bounded by
    \[
    \cO \Bigl( \bigl( \sum _{i \in [K]} c_i \bigr) \bigl( \sum _{i \in [K]} \frac {A_i} {\epsilon} \bigr) + \bigl( \sum _{i \in [K]} c_i \bigr) \bigl( \sum _{i \in [K]} \frac {B_i} {\epsilon} \bigr) \Bigr) .
    \]
\end{proposition}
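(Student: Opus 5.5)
The plan is to run the classical (composite) extragradient / Mirror-Prox method in a block-weighted Euclidean norm, tune the block weights so that $V$ is $1$-Lipschitz in that norm, and then invoke the standard ergodic gap bound of \citet[Eq.~(6.21)]{juditsky2011first}.

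\textbf{Norm and distributed execution.} On $\cE=\cE_1\times\cdots\times\cE_K$ I would use the assembled norm $\Norm{\bfz}_\cE^2=\sum_{i}\alpha_i\Norm{\bfz_i}_i^2$, with block-diagonal operator $\bfP=\oplus_i\alpha_i\bfP_i$. Both EG steps are prox steps
\[
\bfv\mapsto\argmin_{\bfu\in Q}\bigl[\InnerProduct{V(\cdot)}{\bfu}+\psi(\bfu)+\tfrac12\Norm{\bfu-\bfv}_\cE^2\bigr].
\]
Because $\bfP$ and $\psi$ are block-separable, each step splits into independent local prox computations, one per agent. Agent~$i$ therefore needs only $V_i$ at the current joint point, and one exchange of the local blocks per step lets every agent assemble that point. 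Thus each EG iteration costs exactly two communication rounds and one query to each $\sO_i$ per round. This gives $T=2N$ and total oracle cost $2N\sum_i c_i$ after $N$ iterations.

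\textbf{Lipschitz constant in the assembled norm.} Fix $\bfz,\bfz'\in Q$ and set $h_j=\Norm{\bfz_j'-\bfz_j}_j$.
\begin{itemize}
\item Changing one block at a time and applying \labelcref{item:operator-block-Lipschitz-continuity} gives $\Norm{V_i(\bfz')-V_i(\bfz)}_{i^*}\le\sum_j L_{ij}h_j$.
\item Choose $\alpha_i=\frac{1}{D_i}\sum_{j\in[K]}\bar L_{ij}D_j$. By Cauchy--Schwarz with weights $D_j$, $\bigl(\sum_j L_{ij}h_j\bigr)^2\le\bigl(\sum_j \bar L_{ij}D_j\bigr)\bigl(\sum_j\bar L_{ij}h_j^2/D_j\bigr)$.
\item Hence
\[
\Norm{V(\bfz')-V(\bfz)}_{\cE^*}^2=\sum_i\tfrac1{\alpha_i}\Norm{V_i(\bfz')-V_i(\bfz)}_{i^*}^2\le\sum_i D_i\sum_j\bar L_{ij}\tfrac{h_j^2}{D_j}.
\]
\item Swap the sums and use the symmetry $\bar L_{ij}=\bar L_{ji}$. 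The right-hand side equals $\sum_j\frac{h_j^2}{D_j}\sum_i\bar L_{ji}D_i=\sum_j\alpha_jh_j^2=\Norm{\bfz'-\bfz}_\cE^2$.
\end{itemize}
So $V$ is $1$-Lipschitz in $\Norm{\cdot}_\cE$, and the unit stepsize $\eta^k\equiv1$ is admissible.

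\textbf{Gap bound and counting.} For monotone $1$-Lipschitz $V$ with unit steps, the standard Mirror-Prox analysis gives, for every $\bfz\in\cB\cap Q$,
\[
\sum_{k<N}\bigl[\InnerProduct{V(\bfz^{k+1})}{\bfz^{k+1}-\bfz}+\psi(\bfz^{k+1})-\psi(\bfz)\bigr]\le\tfrac12\Norm{\bfz^0-\bfz}_\cE^2.
\]
Monotonicity replaces $V(\bfz^{k+1})$ by $V(\bfz)$ in each inner product, and convexity of $\psi$ passes to the ergodic average $\bar\bfz^N$. This yields
\[
\Delta(\bar\bfz^N)\le\frac1{2N}\sum_i\alpha_iD_i^2=\frac1{2N}\sum_i\bigl(A_i+\bar L_{ii}D_i^2\bigr)=\frac{1}{2N}\sum_i(A_i+B_i).
\]
It then suffices to take $N=\lceil\sum_i(A_i+B_i)/(2\epsilon)\rceil$. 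This gives communication cost $2N=\cO\bigl(\sum_i(A_i+B_i)/\epsilon\bigr)$ and oracle cost $2N\sum_ic_i$, which is the stated product form. If only upper estimates $\hat D_i$ are available, the same argument goes through with a proportionality factor analogous to $\theta$.

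\textbf{Main obstacle.} I expect the only nonroutine step to be the Lipschitz estimate in the block-weighted norm. The weights $\alpha_i$ must be chosen so that Cauchy--Schwarz and the symmetrization $\bar L_{ij}=\max\{L_{ij},L_{ji}\}$ close exactly, with the constant equal to $1$. The choice $\alpha_i=\frac{1}{D_i}\sum_j\bar L_{ij}D_j$ above is what I would try first. The remaining steps are the textbook Mirror-Prox argument.
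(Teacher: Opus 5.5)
Your proposal is correct and takes essentially the route the paper relies on. The paper gives no separate argument; it invokes the mirror-prox bound of \cite{juditsky2011first}, and your weights $\alpha_i=\frac{1}{D_i}\sum_{j\in[K]}\bar L_{ij}D_j$ with unit stepsize are the $K$-block version of the choice made explicit in \cref{proposition:EG-upper-bound-SP}. From there, your Cauchy--Schwarz and symmetrization step ($\bar L_{ij}=\bar L_{ji}$) makes $V$ a $1$-Lipschitz operator in $\Norm{\cdot}_{\cE}$, and the identity $\alpha_iD_i^2=A_i+B_i$ with two rounds and one query per agent per round gives both stated bounds.
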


\subsection{Decoupled method for variational inequality problems}
\label{sec:dm-for-vips}



Now, we present our \DMvip method, which extends the \DMsp into multiplayer general-sum games.

\paragraph{Assembled norm.}

Given parameters $\alpha_i > 0$ for all $i \in [K]$ (to be specified later), we equip the joint space $\cE = \cE_1 \times \dots \times \cE_K$ with the assembled norm:
\begin{equation}\label{eq:assembled-norm-VIP}
\Norm {\bfz} _{\cE} = {\InnerProduct {\bfP \bfz}{\bfz}} ^{\frac 1 2} = \sqrt { \sum_{i=1}^K \alpha_i \Norm {\bfz_i} _i ^2 } \quad \text{for all $\bfz \in \cE$,}
\end{equation}
which corresponds to the block diagonal linear operator \( \bfP = \alpha_1 \bfP_1 \oplus \dots \oplus \alpha_K \bfP_K \). Accordingly, we equip the dual space $\cE^* = \cE_1^* \times \dots \times \cE_K^*$ with the corresponding dual norm:
\begin{equation}\label{eq:assembled-dual-norm-VIP}
\Norm {\bfg} _{\cE^*} = {\InnerProduct{\bfg}{\bfP^{-1} \bfg}}^{\frac 1 2} = \sqrt{ \sum_{i=1}^K \alpha_i^{-1} \Norm {\bfg_i} _{i^*} ^2 } \quad \text{for all $\bfg \in \cE^*$.}
\end{equation}

\paragraph{Template \DMvip.}

To extend our decoupled framework to block composite variational inequality problems (VIPs), we first define the coupled conditioning constant, which characterizes the interaction between the $K$ distinct blocks:
\begin{equation}
\label{eq:coupled-conditioning}
\bar L_{\texttt{c}} \triangleq \sqrt {\max _{j \in [K]} \biggl[ (\alpha_j D_j) ^{-1} \sum _{i \in [K] \setminus \{ j \}} \frac { \bar L_{ij} \bigl( \sum_{l \in [K] \setminus \{i\}} \bar L_{il} D_l \bigr) } { \alpha_i } \biggr]} .
\end{equation}

\Cref{alg:template-decoupled-reduced-operator-method-for-block-VIP} outlines the Decoupled Method for block composite VIPs~(\DMvip), generalizing the $\DMsp$ procedure. The algorithm maintains a sequence of anchor points $\bfv^t$ and orchestrates iterative updates among $K$ agents over a distributed network.

At the start of each iteration, the agents decouple the joint problem by fixing their remote variables to the current anchor components $\bfv^t_{-i}$. This allows each Agent~$i$ to independently and concurrently solve its regularized local subproblem. Specifically, each agent invokes an internal solver $\cM^{\MRN}_i$ to minimize the local residual norm (MRN) up to a target accuracy $\delta_i^{t+1} = \frac{\alpha_i \lambda_{t+1}}{2}$~(Line~\ref{line:find-small-gradient-norm-block-VIP}).

Following this local computation phase, the $K$ agents perform exactly two communication rounds to complete the iteration. In the first round (Line~\ref{line:exchange-variables-vip}), the agents broadcast their locally computed approximate solutions $\bfz_i^{t+1}$ to assemble the joint intermediate point $\bfz^{t+1}$. In the second round (Line~\ref{line:compute-extragradient-vip}), they use this assembled point to evaluate their local partial operators $V_i(\bfz^{t+1})$, which they subsequently exchange to form the full joint operator $V_\psi(\bfz^{t+1})$.

Finally, using this assembled joint operator, the agents compute a closed-form reduced step size $a_{t+1}$, update the running ergodic average $\bar \bfz^{t+1}$, and perform a joint extragradient-like step to generate the next anchor $\bfv^{t+1}$~(Lines~\ref{line:compute-reduced-stepsize-vip} and~\ref{line:reduced-gradient-step-vip}).

\begin{algorithm}[htb]
\caption{$\DMvip \bigl(K, (V_i)_{i \in [K]}, (\psi_i)_{i \in [K]}, \bfz^0, (\lambda_{t})_{t \ge 1}, (\alpha_i)_{i \in [K]} \mid (\cM ^{\MRN} _i) _{i \in [K]}\bigr)$}
\label{alg:template-decoupled-reduced-operator-method-for-block-VIP}
\begin{algorithmic}[1]
    \Require A local solver $\cM ^{\MRN}_i$ for the minimization of residual norms.
    \State $\bfv^{0} = (\bfv^{0}_1, \dots, \bfv^{0}_K) = \bfz^0$.
    \For {$t=0,1,\dots,T-1$}
        \State Let $\delta^{t+1}_i = \frac {\alpha_i \lambda_{t+1}} {2}$ for all $i \in [K]$.
        \State \label{line:find-small-gradient-norm-block-VIP}Concurrently for all $i \in [K]$, Agent~$i$ computes
        \[
        \bigl(\bfz^{t+1}_i, \psi_i ^\prime (\bfz^{t+1}_i)\bigr)
        = \cM ^{\MRN} _i \Bigl(V_i (\cdot; \bfv^t_{-i}),\, \psi_i + \frac {\alpha_i \lambda_{t+1}} {2} \Norm {\cdot - \bfv^t_i}_i^2,\, \bfv^t_i,\, \delta^{t+1}_i \Bigr) .
        \]
        \State \label{line:exchange-variables-vip}All agents exchange $\bfz^{t+1}_i$ to assemble the joint point $\bfz^{t+1} = (\bfz^{t+1}_1, \dots, \bfz^{t+1}_{K})$.
        \State \label{line:compute-extragradient-vip}Agents locally compute $V_i(\bfz^{t+1})$ and exchange to assemble the full operator:
        \[
        V _\psi (\bfz^{t+1}) = V (\bfz^{t+1}) +  \bigl(\psi_1^\prime (\bfz^{t+1}_1), \dots, \psi_K^\prime (\bfz^{t+1}_K)\bigr) .
        \]
        \State \label{line:compute-reduced-stepsize-vip}Let $a_{t+1} = \frac { 2 \InnerProduct {V _\psi (\bfz^{t+1})}{ \bfv^t - \bfz^{t+1} } } { \Norm {V _\psi (\bfz^{t+1})} _{\cE^*} ^2 }$ and generate solution $\bar \bfz^{t+1} = \bigl( \sum _{i=1} ^{t+1} a_{i} \bigr) ^{-1} \sum _{i=1} ^{t+1} a_{i} \bfz^{i}$.
        \State \label{line:reduced-gradient-step-vip}$\bfv^{t+1} = \argmin _{\bfv \in Q}\ \bigl[ a_{t+1} \InnerProduct {V _\psi (\bfz^{t+1})}{\bfv} + \frac 1 2 \Norm {\bfv - \bfv^t} _{\cE} ^2 \bigr]$.
    \EndFor
\end{algorithmic}
\end{algorithm}

Similar to the saddle point setting, we refer to \cref{alg:template-decoupled-reduced-operator-method-for-block-VIP} as a template method because we have abstracted the exact implementation of the inner solvers. For the theoretical guarantees presented below, we merely assume the existence of an algorithmic oracle $\cM ^{\MRN} _i (\hat V_i, \hat \psi_i, \bfv_i, \delta_i)$ capable of taking an MRN instance in the space $\cE_i$ and successfully returning a sufficiently accurate solution. We defer the detailed implementation of these local solvers to \cref{eq:final-implementation-of-DROMbcvip} at the end of this section.
For now, let us proceed with the communication complexity of the template \DMvip method.

\begin{theorem}
\label[theorem]{thm:DROMbcvip-communication-cost}
    Consider the $\DMvip$ template applied to $\sP_{\text{\upshape VIP}}$. With the parameter choices of $\alpha_i = \frac { \sum _{j \in [K] \setminus \{i\} } \bar L_{ij} D_j } {D_i}$ for all $i \in [K]$, and $\lambda_{t} \equiv \lambda \ge 2 \bar L_{\texttt{c}}$, we have:
    \[
    T ^{\DMvip} _{\sP_{\text{\upshape VIP}}} \le 2 + \sum _{\substack{i, j \in [K] \\ i \neq j}} \frac {2 \bar L_{ij} D_i D_j} {\epsilon} .
    \]
\end{theorem}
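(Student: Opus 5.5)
The plan is to run the argument of \cref{lemma:DROMsp-communication-complexity} in the $K$-block setting and then do the parameter arithmetic. I would take $\lambda_t \equiv \lambda = 2\bar L_{\texttt{c}}$. The key observation is that the prescribed $\alpha_i$ force $\bar L_{\texttt{c}} = 1$, hence $\lambda = 2$.

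\textbf{Computing $\bar L_{\texttt{c}}$.} Write $S_i \triangleq \sum_{l \neq i} \bar L_{il} D_l$, so that $\alpha_i = S_i / D_i$. For each $i \neq j$, the summand in \eqref{eq:coupled-conditioning} becomes $\bar L_{ij} S_i / \alpha_i = \bar L_{ij} D_i$. Since $\bar L_{ij} = \bar L_{ji}$, summing over $i \neq j$ gives exactly $S_j$. Because $\alpha_j D_j = S_j$, the bracket equals $1$ for every $j$, and therefore $\bar L_{\texttt{c}} = 1$.

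\textbf{Weak coupling and one-round subproblems.} Since $\lambda \ge 2\bar L_{\texttt{c}}$, every Monteiro--Svaiter subproblem generated by \ROM is weakly coupled. By the $K$-block version of \FDS correctness (\cref{lemma:FDS-correctness-VIP}), the points $\bfz^{t+1}$ produced in Line~\ref{line:find-small-gradient-norm-block-VIP} from the local MRN solutions, with accuracies $\delta_i = \alpha_i \lambda / 2$, solve the MSS $(V, \psi, \bfv^t, \lambda)$ in the norm \eqref{eq:assembled-norm-VIP}. Consequently \DMvip is exactly \ROM, and each iteration costs two communication rounds: the exchange in Line~\ref{line:exchange-variables-vip} and the one in Line~\ref{line:compute-extragradient-vip}.

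\textbf{Gap bound.} Fix $\bfz \in \cB \cap Q$.
\begin{itemize}
\item By convexity of $\psi$, Jensen's inequality for the ergodic average $\bar \bfz^T$, and monotonicity of $V$,
\[
\InnerProduct{V(\bfz)}{\bar\bfz^T - \bfz} + \psi(\bar\bfz^T) - \psi(\bfz) \le \Bigl(\sum_{t} a_{t+1}\Bigr)^{-1} \sum_{t} a_{t+1} \InnerProduct{V_\psi(\bfz^{t+1})}{\bfz^{t+1} - \bfz}.
\]
This uses $\psi(\bfz^{t+1}) - \psi(\bfz) \le \InnerProduct{\psi'(\bfz^{t+1})}{\bfz^{t+1} - \bfz}$.
\item \cref{lemma:Reduced-gradient-method-for-VIPs} bounds the right-hand sum by $\frac12 \Norm{\bfz^0 - \bfz}_\cE^2 \le \frac12 \sum_i \alpha_i D_i^2$, and also gives $a_{t+1} \ge 1/\lambda$.
\item Hence $\Delta(\bar\bfz^T) \le \lambda \sum_i \alpha_i D_i^2 / (2T)$.
\end{itemize}
Now $\sum_i \alpha_i D_i^2 = \sum_i D_i S_i = \sum_{i \neq j} \bar L_{ij} D_i D_j$. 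With $\lambda = 2$, taking $T = \lceil \sum_{i \neq j} \bar L_{ij} D_i D_j / \epsilon \rceil$ iterations suffices. That is at most $2T \le 2 + \sum_{i \neq j} 2\bar L_{ij} D_i D_j / \epsilon$ rounds, which is the claimed bound.

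\textbf{Main obstacle.} The substantive step is the block \FDS correctness invoked above. If I had to prove it myself, I would proceed as follows.
\begin{itemize}
\item Split the block-$i$ MSS residual into the local MRN residual, which is at most $\frac{\alpha_i \lambda}{2} \Norm{\bfz_i^+ - \bfv_i}_i$, plus the coupling error $V_i(\bfz^+) - V_i(\bfz_i^+; \bfv_{-i})$.
\item Change one remote block at a time and use \ref{item:operator-block-Lipschitz-continuity} to bound the coupling error by $\sum_{j \neq i} L_{ij} \Norm{\bfz_j^+ - \bfv_j}_j$.
\item Bound $\sum_i \alpha_i^{-1} \bigl(\sum_{j \neq i} \bar L_{ij} r_j\bigr)^2$ by $\bar L_{\texttt{c}}^2 \sum_j \alpha_j r_j^2$, where $r_j = \Norm{\bfz_j^+ - \bfv_j}_j$. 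I would use Cauchy--Schwarz with weights $\bar L_{il} D_l$, which is what produces the form \eqref{eq:coupled-conditioning}.
\item Combine the two pieces with the triangle inequality in $\Norm{\cdot}_{\cE^*}$ to get total residual at most $(\lambda/2 + \bar L_{\texttt{c}}) \Norm{\bfz^+ - \bfv}_\cE \le \lambda \Norm{\bfz^+ - \bfv}_\cE$, which is the MSS condition.
\end{itemize}
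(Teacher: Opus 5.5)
Your proposal is correct and follows the paper's own route. Like the proof of \cref{thm:DROMbcvip-complexity}, it bounds the gap via monotonicity, convexity of $\psi$, and \cref{lemma:Reduced-gradient-method-for-VIPs} with $a_{t+1} \ge 1/\lambda$, invokes the weak-coupling correctness of \FDS (\cref{lemma:FDS-correctness-VIP}), and then uses $\sum_i \alpha_i D_i^2 = \sum_{i \neq j} \bar L_{ij} D_i D_j$. Your explicit check that the prescribed $\alpha_i$ give $\bar L_{\texttt{c}} = 1$, so that $\lambda = 2$ is what the stated bound requires, makes precise a step the paper leaves implicit. Your Minkowski bound $(\lambda/2 + \bar L_{\texttt{c}})$ in the \FDS step is only a minor variant of the paper's $(a+b)^2 \le 2a^2 + 2b^2$ and gives the same threshold $\lambda \ge 2\bar L_{\texttt{c}}$.
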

For the distributed VIP considered in this paper, the classic \EG method represents the best known communication cost. Now, let us compare our communication guarantee with that of \EG.

\begin{remark}[Improved communication]
\label[remark]{remark:improved-communication-cost-VIP-diagonal-conditioning-dominates}
    The classic \EG method takes
    \[
    \frac {1} \epsilon \sum _{i \in [K]} (A_i + B_i)
    \]
    communication rounds~(cf.~\cref{proposition:EG-upper-bound-block-VIPs}).
    Our communication cost in \cref{thm:DROMbcvip-communication-cost} is $\cO \bigl( \sum _{i \in [K]} A_i / \epsilon \bigr)$, which is consistently no worse compared to that of \EG and is substantially faster when the ``diagonal conditioning'' dominates---i.e.,
    \[
    \sum _{i \in [K]} B_i \gg \sum _{i \in [K]} A_i .
    \]
    To our knowledge, \DMvip improves the state-of-the-art communication cost for distributed VIPs.
\end{remark}

\subsection{Detailed proofs}

\subsubsection{Proof for \FDS}

Let us provide the detailed pseudocode of \FDS for VIPs in \cref{alg:fully-decoupled-solver-VIP}.
Then, we prove the correctness of the solution returned by \FDS.
\setlength{\textfloatsep}{0.2cm}
\begin{algorithm}
\caption{$\FDS _{\Norm{\cdot} _{\cE}} \bigl( (V_i) _{i \in [K]}, (\psi_i) _{i \in [K]}, \bfv, \lambda \mid (\cM ^{\MRN} _i) _{i \in [K]} \bigr)$}
\label{alg:fully-decoupled-solver-VIP}
\begin{algorithmic}[1]
\Require Solver $\cM ^{\MRN} _i$ for the minimization of residual norms, for all $i \in [K]$.
    \For {$i \in [K]$}
        \State $\delta_i = \frac {\alpha_i \lambda} {2}$.
        \State $\hat \psi_i = \psi_i + \frac{\alpha_i \lambda}{2} \Norm {\cdot - \bfv_i} _i ^2$.
        \State $(\bfz_i^+, \psi^\prime _i (\bfz_i^+)) = \cM ^{\MRN} _i \bigl(V_i (\cdot; \bfv_{-i}), \hat \psi_i, \bfv_i, \delta_i \bigr)$.
        \State $\psi_i ^\prime (\bfz_i ^+) = \hat \psi _i ^\prime (\bfz_i^+) - \alpha_i \lambda \bfP_i (\bfz_i ^+ - \bfv_i)$.
    \EndFor

    \State \Return $(\bfz^+, \psi ^\prime (\bfz^+) )$, where $ \bfz^+ = (\bfz^+_1, \cdots, \bfz^+_K)$ and $\psi ^\prime (\bfz^+) = (\psi ^\prime _1 (\bfz_1^+), \cdots, \psi ^\prime _K (\bfz_K^+))$.
\end{algorithmic}
\end{algorithm}
\setlength{\floatsep}{0.2cm}

\begin{lemma}
\label[lemma]{lemma:FDS-correctness-VIP}
Under \labelcref{item:operator-block-Lipschitz-continuity}, for $\lambda \ge 2 \bar L_{\texttt{c}}$, \FDS~(\cref{alg:fully-decoupled-solver-VIP}) returns the correct solution of the MS subproblem given by $(V, \psi, \bfv, \lambda)$.
\end{lemma}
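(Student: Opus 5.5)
The plan is to verify the Monteiro--Svaiter criterion \eqref{eq:stopping-criterion-in-hybrid-projection} block by block. The per-block residuals are computed at the decoupled points $(\bfz_i^+;\bfv_{-i})$, so the work is to control the error from replacing $\bfv_{-i}$ by $\bfz_{-i}^+$, and then to absorb that error using $\lambda \ge 2\bar L_{\texttt{c}}$.

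\textbf{Step 1: per-block residual bound.} Write $r_i \triangleq \Norm{\bfz_i^+ - \bfv_i}_i$. The returned subgradient is $\psi_i'(\bfz_i^+) = \hat\psi_i'(\bfz_i^+) - \alpha_i\lambda\bfP_i(\bfz_i^+-\bfv_i)$, and it lies in $\partial\psi_i(\bfz_i^+)$ because $\partial\hat\psi_i = \partial\psi_i + \alpha_i\lambda\bfP_i(\cdot-\bfv_i)$. The MRN guarantee with $\delta_i = \alpha_i\lambda/2$ therefore reads
\[
\Norm{V_i(\bfz_i^+;\bfv_{-i}) + \psi_i'(\bfz_i^+) + \alpha_i\lambda\bfP_i(\bfz_i^+-\bfv_i)}_{i^*} \le \tfrac{\alpha_i\lambda}{2}\, r_i .
\]

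\textbf{Step 2: coupling error.} Pass from $(\bfz_i^+;\bfv_{-i})$ to $\bfz^+$ by replacing the blocks $j\neq i$ one at a time. Each replacement changes a single block $j$ with the others fixed, so \labelcref{item:operator-block-Lipschitz-continuity} applies at each step. Telescoping gives
\[
\Norm{V_i(\bfz^+) - V_i(\bfz_i^+;\bfv_{-i})}_{i^*} \le \sum_{j\neq i} \bar L_{ij}\, r_j .
\]
Combining with Step 1 by the triangle inequality, the $i$-th block $g_i$ of $V(\bfz^+)+\psi'(\bfz^+)+\lambda\bfP(\bfz^+-\bfv)$ satisfies $\Norm{g_i}_{i^*} \le \frac{\alpha_i\lambda}{2} r_i + \sum_{j\ne i}\bar L_{ij} r_j$.

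\textbf{Step 3: aggregation in the dual norm.} This is the main obstacle, since the constant $\bar L_{\texttt{c}}$ in \eqref{eq:coupled-conditioning} must come out exactly. Using \eqref{eq:assembled-dual-norm-VIP} and $(a+b)^2\le 2a^2+2b^2$,
\[
\Norm{g}_{\cE^*}^2 \le \frac{\lambda^2}{2}\sum_i \alpha_i r_i^2 + 2\sum_i \alpha_i^{-1}\Bigl(\sum_{j\ne i}\bar L_{ij} r_j\Bigr)^2 .
\]
For the second term, I apply Cauchy--Schwarz with weights $D_j$:
\[
\Bigl(\sum_{j\ne i}\bar L_{ij} r_j\Bigr)^2 \le \Bigl(\sum_{l\ne i}\bar L_{il}D_l\Bigr)\Bigl(\sum_{j\ne i}\bar L_{ij} r_j^2/D_j\Bigr).
\]
Then I swap the order of summation over $i$ and $j$. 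The coefficient of $r_j^2$ becomes $D_j^{-1}\sum_{i\ne j}\bar L_{ij}\bigl(\sum_{l\ne i}\bar L_{il}D_l\bigr)/\alpha_i$, which by \eqref{eq:coupled-conditioning} is at most $\alpha_j\bar L_{\texttt{c}}^2$. Hence the second term is at most $2\bar L_{\texttt{c}}^2\sum_j\alpha_j r_j^2 \le \frac{\lambda^2}{2}\Norm{\bfz^+-\bfv}_{\cE}^2$, using $\lambda\ge 2\bar L_{\texttt{c}}$.

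\textbf{Conclusion.} Adding the two terms gives $\Norm{g}_{\cE^*}^2\le\lambda^2\Norm{\bfz^+-\bfv}_{\cE}^2$, which is exactly \eqref{eq:stopping-criterion-in-hybrid-projection}. Together with $\bfz^+\in Q$ and $\psi'(\bfz^+)\in\partial\psi(\bfz^+)$, this shows that $(\bfz^+,\psi'(\bfz^+))$ solves the MSS.
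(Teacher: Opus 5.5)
Your proof is correct and follows the paper's argument essentially step for step. The steps are: the per-block triangle inequality with the block-Lipschitz bound from \labelcref{item:operator-block-Lipschitz-continuity}; the split $(a+b)^2 \le 2a^2 + 2b^2$ in the assembled dual norm; Cauchy--Schwarz with weights $D_j$; swapping the order of summation to reach \eqref{eq:coupled-conditioning}; and absorbing the remainder via $\lambda \ge 2\bar L_{\texttt{c}}$. Your explicit block-by-block telescoping and your use of $\bar L_{ij}$ from the start, where the paper carries $L_{ij}$ until the final comparison, are harmless refinements of the same route.
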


\begin{proof}[Proof of \cref{lemma:FDS-correctness-VIP}]
    For all $i \in [K]$, by \labelcref{item:operator-block-Lipschitz-continuity} and then by the relative distance accuracy, we have
    \begin{equation}
    \label{eq:FDS-bound-on-block-coordinate-i}
    \begin{aligned}
    & \Norm { V_i (\bfz^{+}) + \psi_i^\prime (\bfz^{+}_i) + \alpha_i \lambda \bfP_i (\bfz^{+}_i - \bfv_i)} _{i^*} \\
    &\quad \le
    \Norm { V_i (\bfz^{+}_i; \bfv_{-i}) + \psi_i^\prime (\bfz^{+}) + \alpha_i \lambda \bfP_i (\bfz^{+}_i - \bfv_i)} _{i^*} + \sum _{j \in [K] \setminus \{i\}} L_{ij} \Norm {\bfz^+_j - \bfv_j} _j \\
    &\quad \le \delta_i \Norm {\bfz^+_i - \bfv_i} _{i} + \sum _{j \in [K] \setminus \{i\}} L_{ij} \Norm {\bfz^+_j - \bfv_j} _j .
    \end{aligned}
    \end{equation}

    Finally, we assemble the norms:
    \[
    \begin{aligned}
    & \Norm { V (\bfz^{+}) + \psi^\prime (\bfz^{+}) + \lambda \bfP (\bfz^{+} - \bfv)} _{\cE^*} ^2 \\
    &\quad = \sum _{i \in [K]} \alpha_i^{-1} \Norm { V_i (\bfz^{+}) + \psi_i^\prime (\bfz^{+}_i) + \alpha_i \lambda \bfP_i (\bfz^{+}_i - \bfv_i)} _{i^*} ^2 \\
    &\quad \refLE{eq:FDS-bound-on-block-coordinate-i} \sum _{i \in [K]} \alpha_i^{-1} \Bigl( \delta_i \Norm {\bfz^+_i - \bfv_i} _{i} + \sum _{j \in [K] \setminus \{i\}} L_{ij} \Norm {\bfz^+_j - \bfv_j} _j \Bigr) ^2 \\
    &\quad \le \sum _{i \in [K]} 2 \alpha_i^{-1} \biggl[ \delta_i^2 \Norm {\bfz^+_i - \bfv_i} _{i} ^2 + \Bigl( \sum _{j \in [K] \setminus \{i\}} L_{ij} \Norm {\bfz^+_j - \bfv_j} _j \Bigr) ^2 \biggr] \\
    &\quad = \frac {\lambda^2} {2} \sum _{i \in [K]} \Bigl( \alpha_i \Norm {\bfz^+_i - \bfv_i} _{i} ^2 \Bigr) + 2 \sum _{i \in [K]} \biggl[ \alpha_i^{-1} \Bigl( \sum _{j \in [K] \setminus \{i\}} L_{ij} \Norm {\bfz^+_j - \bfv_j} _j \Bigr) ^2 \biggr] \\
    &\quad \le \frac {\lambda^2} {2} \Norm {\bfz^+ - \bfv} _{\cE} ^2 + 2 \sum _{i \in [K]} \biggl[ \alpha_i^{-1} \Bigl( \sum _{l \in [K] \setminus \{i\}} L_{il} D_l \Bigr) \Bigl( \sum _{j \in [K] \setminus \{i\}} \frac {L_{ij}} {D_j} \Norm {\bfz^+_j - \bfv_j} _j ^2 \Bigr) \biggr] \\
    &\quad = \frac {\lambda^2} {2} \Norm {\bfz^+ - \bfv} _{\cE} ^2 + 2 \sum _{j \in [K]} \biggl[ \frac { \lVert {\bfz^+_j - \bfv_j} \rVert _j ^2 } { D_j } \Bigl( \sum _{ i \in [K] \setminus \{ j \} } \frac { L_{ij} \bigl( \sum _{l \in [K] \setminus \{i\}} L_{il} D_l \bigr) } { \alpha_i } \Bigr) \biggr] \\
    &\quad \refLE{eq:coupled-conditioning} \frac {\lambda^2} {2} \Norm {\bfz^+ - \bfv} _{\cE} ^2 + 2 \Norm {\bfz^+ - \bfv} _{\cE} ^2 \bar L_{\texttt{c}}^2 \\
    &\quad \le \frac {\lambda^2} {2} \Norm {\bfz^+ - \bfv} _{\cE} ^2 + \frac {\lambda^2} {2} \Norm {\bfz^+ - \bfv} _{\cE} ^2 = {\lambda^2} \Norm {\bfz^+ - \bfv} _{\cE} ^2 .
    \end{aligned}
    \]
\end{proof}

\subsubsection{Proof for \MRN}

Our algorithm is built upon
\cref{lemma:making-the-residual-norm-small-full}, the proof of which can be found in \cite[Corollary~2.4]{boct2024extra}.

\begin{lemma}
\label[lemma]{lemma:making-the-residual-norm-small-full}
    Assume \labelcref{item:a-monotone-operator-and-a-convex-function}, \labelcref{item:operator-Lipschitz-continuity}, and that the solution set of the VIP of $(V _w, \psi _w)$ is non-empty.
    Then, there exists an algorithm, denoted by
    \(
    ( \bfw^+, \psi _w ^\prime (\bfw^+) ) = \FEGM\ (V _w, \psi _w, \bfv_{w}, \xi \mid L ),
    \)
    which takes no more than
    \(
    C_0 \cdot { \frac {L} {\xi} } 
    \)
    operator queries and returns $( \bfw^+, \psi _w ^\prime (\bfw^+) )$ that satisfies $\xi$-distance-to-solution accuracy, where $C_0 > 0$ is some fixed constant.
\end{lemma}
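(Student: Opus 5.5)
The plan is to instantiate $\FEGM$ as an anchored (Halpern-type) extragradient method for the composite monotone inclusion $\0 \in V_w(\bfw) + \partial \psi_w(\bfw)$, with the anchor set to the reference point $\bfv_w$. I would then read off an $\cO(L/k)$ last-iterate bound on the residual. The result is exactly \cite[Corollary~2.4]{boct2024extra}. I would reproduce it in three steps: the algorithm, a potential-function argument, and a final conversion into $\xi$-distance-to-solution accuracy.

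\emph{Algorithm.} Start from $\bfw_0 = \bfv_w$ with step $\eta = c/L$ for a small absolute $c$ (say $c = 1/8$) and anchoring weights $\beta_k = 1/(k+2)$. Each iteration performs two proximal extragradient steps:
\[
\bar \bfw_k = \mathrm{prox}_{\eta \psi_w}\bigl(\bfw_k + \beta_k \bfP_w(\bfv_w - \bfw_k) - \eta V_w(\bfw_k)\bigr),
\qquad
\bfw_{k+1} = \mathrm{prox}_{\eta \psi_w}\bigl(\bfw_k + \beta_k \bfP_w(\bfv_w - \bfw_k) - \eta V_w(\bar \bfw_k)\bigr).
\]
The prox is taken in the $\bfP_w$-geometry. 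From the optimality condition of the second prox we extract an explicit $\psi_w'(\bfw_{k+1}) \in \partial \psi_w(\bfw_{k+1})$. We then define the residual $G_{k+1} \triangleq V_w(\bfw_{k+1}) + \psi_w'(\bfw_{k+1})$. This construction mirrors the way the subgradient was produced from the proximal step in the proof of \cref{lemma:making-the-gradient-norm-small-full}, so every output comes with a certified subgradient. Each iteration costs two operator queries.

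\emph{Potential argument.} This is the main obstacle. Following the Yoon--Ryu EAG / Tran-Dinh analysis, I would show that
\[
\Phi_k = a_k \Norm{G_k}_{w^*}^2 + b_k \InnerProduct{G_k}{\bfw_k - \bfv_w}, \qquad a_k \asymp \eta k^2,\ b_k \asymp k,
\]
is nonincreasing. The ingredients are as follows:
\begin{itemize}
\item monotonicity of $V_w + \partial\psi_w$ applied between consecutive iterates, using the two subgradients furnished by the prox steps;
\item $L$-Lipschitz continuity of $V_w$, to control $V_w(\bar\bfw_k) - V_w(\bfw_{k+1})$ (this is where $\eta L \le c$ is needed);
\item the anchoring identity linking $\bfw_{k+1} - \bfv_w$ to $\bfw_k - \bfv_w$.
\end{itemize}
The composite term is the delicate part. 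The subgradients at $\bar\bfw_k$ and at $\bfw_{k+1}$ differ, so the inequality must be arranged so that only monotonicity of the full set-valued operator is used, never smoothness of $\psi_w$. If the direct Lyapunov calculation becomes unwieldy, I would fall back on the resolvent/Halpern viewpoint: treat the step as an approximate Halpern iteration on the firmly nonexpansive map $J_{\eta(V_w + \partial\psi_w)}$ and bound the inexactness by $\eta L$.

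\emph{Conclusion.} Take any solution $\tilde\bfw$. Monotonicity gives $\InnerProduct{G_k}{\bfw_k - \tilde\bfw} \ge 0$, so $\InnerProduct{G_k}{\bfw_k - \bfv_w} \ge -\Norm{G_k}_{w^*}\Norm{\bfv_w - \tilde\bfw}_w$. Combining this with $\Phi_k \le \Phi_0 = \cO(\eta \Norm{G_0}^2_{w^*}) = \cO(L\Norm{\bfv_w - \tilde\bfw}_w^2)$ (using $\Norm{G_0}_{w^*} \lesssim L\Norm{\bfv_w - \tilde\bfw}_w$ after one prox step) and solving the resulting quadratic inequality in $\Norm{G_k}_{w^*}$ gives
\[
\Norm{G_k}_{w^*} \le C' L \Norm{\bfv_w - \tilde\bfw}_w / k .
\]
Stopping at $k = \lceil C' L/\xi \rceil$ yields $\xi$-distance-to-solution accuracy in the sense of \cref{def:distance-to-solution-accuracy}, using at most $C_0 L/\xi$ operator queries with $C_0 = 2C' + 2$.
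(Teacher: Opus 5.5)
Your proposal takes the paper's route. The paper gives no argument beyond invoking \cite[Corollary~2.4]{boct2024extra} for an anchored (fast) extragradient method with an $\cO(L \Norm{\bfv_w - \tilde\bfw}_w / k)$ residual bound, and your scheme (prox-certified subgradients, Yoon--Ryu potential, quadratic-inequality conversion) is an outline of that same cited result.

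Two small fixes are needed. First, inside the prox the anchored point should read $\bfw_k + \beta_k(\bfv_w - \bfw_k) - \eta \bfP_w^{-1} V_w(\cdot)$; as written you add the dual vector $\bfP_w(\bfv_w - \bfw_k)$ to a primal point. Second, absorbing the additive constant from the ceiling and the initial prox step into $C_0 L/\xi$ is only valid when $\xi = \cO(L)$. The statement itself shares this caveat, since no method can certify the residual bound with fewer than one query when $\xi \gg L$.
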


\subsubsection{Concrete implementation}

We are now back to considering the VIPs.
Let us use $\FEGM$ in \cref{lemma:making-the-residual-norm-small-full} for the minimization of residual norms:
\[
\cM ^{\FEGM} _{i} (\hat V_i, \hat \psi_i, \bfv_i, \delta_i ) \triangleq \FEGM (\hat V_i, \hat \psi_i, \bfv_i, \frac {2 \delta_i} {3} \mid L_{ii} ) .
\]
Then, for any Monteiro-Svaiter Subproblem given by $(V, \psi, \bfv, \lambda)$, we leverage the solver
\[
\algname{FDS-FEGM} (V, \psi, \bfv, \lambda) = \FDS _{\Norm{\cdot}_{\cE}} \bigl(
    V, \psi, \bfv, \lambda
    \mid
    (\cM ^{\FEGM} _{i} ) _{i \in [K]}
\bigr) .
\]
Finally, we obtain the concrete algorithm \DMvip as follows:
\begin{mdframed}
\begin{equation}
\label{eq:final-implementation-of-DROMbcvip}
\begin{aligned}
\ROM _{\Norm{\cdot}_{\cE}} \Bigl(
    (V_i)_{i \in [K]}, (\psi_i)_{i \in [K]}, \bfz^0, (\lambda_{t})_{t \ge 1}
    \mid
    \algname{FDS-FEGM}
\Bigr) . %
\end{aligned}
\end{equation}
\end{mdframed}
Combining \cref{lemma:Reduced-gradient-method-for-VIPs,lemma:FDS-correctness-VIP,lemma:exact-accuracy-to-relative-accuracy-under-strong-monotonicity,lemma:making-the-residual-norm-small-full}, with the implementation in \cref{eq:final-implementation-of-DROMbcvip},
we conclude that \cref{thm:DROMbcvip-complexity} holds for the constant $C_0$ from \cref{lemma:making-the-residual-norm-small-full}. We include the complete proof below.

\begin{lemma}
\label{thm:DROMbcvip-complexity}
    Under \labelcref{item:bounded-distance-to-block-solution,item:operator-block-Lipschitz-continuity}, for
    \(
    \lambda_{t+1} \equiv \lambda \ge 2 \bar L_{\texttt{c}},
    \)
    \DMvip~(\cref{alg:template-decoupled-reduced-operator-method-for-block-VIP}) with the implementation in \cref{eq:final-implementation-of-DROMbcvip} takes no more than
    \[
    2T
    \]
    communication rounds and no more than
    \[
    T \cdot \bigl( 1 + C_0 \cdot \frac{3 L_{ii}} {\alpha_i \lambda} \bigr)
    \]
    queries to $V_i$, for all $i \in [K]$,
    and obtains an $\epsilon$-approximate solution
    \(
    \bar \bfz ^T ,
    \)
    where
    \[
    T = \Ceil[\big]{ \frac{ \sum _i {\alpha_i \lambda D_i^2} } { 2 \epsilon } }
    \]
    and $C_0 > 0$ is some fixed constant.
\end{lemma}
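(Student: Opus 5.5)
The plan mirrors the proof of \cref{lemma:DROMsp-communication-complexity}, with \FDS for VIPs and \FEGM in place of the saddle-point \FDS and \ARM. The argument has three parts: a gap bound, the choice of $T$, and an oracle count.

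\textbf{Gap bound.} By monotonicity of $V$ and convexity of $\psi$ (the weak-solution formulation), for every $\bfz \in \cB \cap Q$ we have
\[
\InnerProduct{V(\bfz)}{\bfz^{t+1} - \bfz} + \psi(\bfz^{t+1}) - \psi(\bfz) \le \InnerProduct{V_\psi(\bfz^{t+1})}{\bfz^{t+1} - \bfz}.
\]
This uses $\InnerProduct{V(\bfz)}{\bfz^{t+1}-\bfz} \le \InnerProduct{V(\bfz^{t+1})}{\bfz^{t+1}-\bfz}$ and the subgradient inequality for $\psi$ at $\bfz^{t+1}$ with $\psi'(\bfz^{t+1})$. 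Averaging with weights $a_{t+1}$ and using convexity of $\psi$ together with linearity in $\bar\bfz$ gives
\[
\Delta(\bar\bfz^T) \le \Bigl(\sum_{t} a_{t+1}\Bigr)^{-1} \sup_{\bfz \in \cB \cap Q} \sum_t a_{t+1} \InnerProduct{V_\psi(\bfz^{t+1})}{\bfz^{t+1} - \bfz}.
\]

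\textbf{Choice of $T$.} Each MSS call is solved correctly by \cref{lemma:FDS-correctness-VIP}, since $\lambda \ge 2\bar L_{\texttt{c}}$. This requires each $\cM^{\FEGM}_i$ to return a $\delta_i$-relative-distance-accurate MRN solution, which I address below. Given that, \cref{lemma:Reduced-gradient-method-for-VIPs} bounds the numerator by $\tfrac12\Norm{\bfz^0 - \bfz}_\cE^2 \le \tfrac12 \sum_i \alpha_i D_i^2$ for $\bfz \in \cB$. It also gives $a_{t+1} \ge 1/\lambda$, so $\sum_t a_{t+1} \ge T/\lambda$. Hence
\[
\Delta(\bar\bfz^T) \le \frac{\lambda \sum_i \alpha_i D_i^2}{2T} \le \epsilon
\]
for the stated $T$. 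Each iteration uses exactly two exchange rounds (Lines~\ref{line:exchange-variables-vip} and~\ref{line:compute-extragradient-vip}), so the total is $2T$ communication rounds.

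\textbf{Oracle count and the main obstacle.} The delicate step is checking that the local MRN instances satisfy the hypotheses of \cref{lemma:exact-accuracy-to-relative-accuracy-under-strong-monotonicity} and \cref{lemma:making-the-residual-norm-small-full}. For agent $i$, the operator is $V_i(\cdot;\bfv_{-i})$ with composite term $\hat\psi_i = \psi_i + \frac{\alpha_i\lambda}{2}\Norm{\cdot - \bfv_i}_i^2$.

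\emph{Monotonicity and strong monotonicity.} The operator $V_i(\cdot;\bfv_{-i})$ is monotone, since it is a diagonal block restriction of the monotone $V$: taking $\bfz,\bfz'$ that differ only in block $i$ gives exactly the block-$i$ inequality. Its Lipschitz constant is $L_{ii}$ by \labelcref{item:operator-block-Lipschitz-continuity}. The operator $V_i(\cdot;\bfv_{-i}) + \partial\hat\psi_i$ is $\mu$-strongly maximally monotone with $\mu = \alpha_i\lambda$.

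\emph{Existence of a solution.} The solution set is nonempty because a strongly monotone continuous operator plus a closed convex function has a unique solution.

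\emph{Accuracy parameter.} Set $\xi = \tfrac{2\delta_i}{3}$ with $\delta_i = \alpha_i\lambda/2$. Then
\[
\frac{\mu\delta_i}{\mu+\delta_i} = \frac{\alpha_i\lambda \cdot \alpha_i\lambda/2}{3\alpha_i\lambda/2} = \frac{\alpha_i\lambda}{3} = \xi,
\]
so \cref{lemma:exact-accuracy-to-relative-accuracy-under-strong-monotonicity} applies with equality.

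\emph{Query count.} \cref{lemma:making-the-residual-norm-small-full} then bounds the queries to $V_i$ by $C_0 L_{ii}/\xi = 3C_0 L_{ii}/(\alpha_i\lambda)$. The subgradient of $\hat\psi_i$ returned by \FEGM must be converted back into $\psi_i'$, as in the \FDS pseudocode; this conversion costs no queries.

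\emph{Total.} Adding one query per iteration for $V_i(\bfz^{t+1})$ and summing over $T$ iterations gives $T\bigl(1 + C_0\frac{3L_{ii}}{\alpha_i\lambda}\bigr)$ queries to $V_i$, as claimed.
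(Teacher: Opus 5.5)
Your proposal is correct and follows the paper's proof essentially step by step. You bound the restricted gap via monotonicity and convexity of $\psi$, then combine the ROM lemma with FDS correctness for $\lambda \ge 2\bar L_{\texttt{c}}$ to get $a_{t+1}\ge 1/\lambda$ and hence $2T$ rounds, and finally count oracle calls via the strong-monotonicity conversion with $\xi = 2\delta_i/3$ and the FEGM query bound. Your explicit checks fill in details the paper leaves implicit: monotonicity of the block restriction $V_i(\cdot;\bfv_{-i})$, the modulus $\mu = \alpha_i\lambda$, existence of an MRN solution, and the identity $\xi = \mu\delta_i/(\mu+\delta_i)$.
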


\begin{proof}[Proof of \cref{thm:DROMbcvip-complexity}]
    By \labelcref{item:monotone-operator}, we have
    \[
    \Delta (\bar \bfz^{T}) \le \Bigl( \sum _{t=0} ^{T-1} a_{t+1} \Bigr) ^{-1} \max _{\bfz \in \cB \cap Q}\ \biggl[ \sum _{t=0} ^{T-1} a_{t+1} \InnerProduct { V _\psi (\bfz^{t+1}) }{ \bfz^{t+1} - \bfz } \biggr] .
    \]
    Further, with $\lambda \ge 2 \bar L_{\texttt{c}}$, by \cref{lemma:Reduced-gradient-method-for-VIPs,lemma:FDS-correctness-VIP}, we have
    \[
    \begin{aligned}
    & \Delta (\bar \bfz^{T})
    \le \Bigl( \sum _{t=0} ^{T-1} a_{t+1} \Bigr) ^{-1} \max _{\bfz \in \cB \cap Q}\ \biggl[ \sum _{t=0} ^{T-1} a_{t+1} \InnerProduct { V _\psi (\bfz^{t+1}) }{ \bfz^{t+1} - \bfz } \biggr] \\
    &\quad \le \Bigl( \sum _{t=0} ^{T-1} a_{t+1} \Bigr) ^{-1} \Bigl[ \sum _{i \in [K]} \bigl( { \frac {\alpha_i} {2} \max _{\bfz_i \in \cB_i \cap \dom \psi_i}\ \Norm {\bfz^0_i - \bfz_i}_i ^2 } \bigr) \Bigr] \\
    &\quad \le \Bigl( \sum _{t=0} ^{T-1} \frac 1 {\lambda_{t+1}} \Bigr) ^{-1} \cdot \frac 1 2 \sum _{i \in [K]} \alpha_i D_i^2
    \le \epsilon ,
    \end{aligned}
    \]
    where the last inequality follows from the assignments of $(\lambda_{t}) _{t \ge 1}$ and $T$.
    Therefore, the number of communication rounds is bounded by $2T$.

    Now we count the number of gradient queries.
    By \cref{lemma:exact-accuracy-to-relative-accuracy-under-strong-monotonicity}, \FEGM always returns the solution with the required relative distance accuracy; and in view of \cref{lemma:making-the-residual-norm-small-full}, it takes no more than $C_0 \cdot { \frac{3 L_{ii} } {\alpha_i \lambda} } $ gradient queries to $V_i$, for all $i \in [K]$.
    Therefore, the numbers of queries to $V_i$ are bounded by $T \cdot \bigl( 1 + C_0 \cdot { \frac{3 L_{ii} } {\alpha_i \lambda} } \bigr)$, for all $i \in [K]$.
\end{proof}

\begin{remark}[Oracle comparison]
    Under the same choice of parameters as in \cref{thm:DROMbcvip-communication-cost}, the oracle cost of \DMvip is bounded by
    \begin{equation}
    \label{eq:computational-cost-block-VIPs}
    \frac 2 \epsilon \Bigl( \sum _{i \in [K]} c_i \Bigr) \Bigl( \sum _{i \in [K]} A_{i} \Bigr) + \frac {3 C_0} {\epsilon} \Bigl( \sum _{i \in [K]} \frac {B_i c_i} {A_i} \Bigr) \Bigl( \sum _{i \in [K]} A_{i} \Bigr) .
    \end{equation}
    Compared to the computational cost of \EG, which is given by
    \[
    \frac {1} \epsilon \Bigl( \sum _{i \in [K]} c_i \Bigr) \Bigl( \sum _{i \in [K]} A_{i} \Bigr) + \frac {1} \epsilon \Bigl( \sum _{i \in [K]} c_i \Bigr) \Bigl( \sum _{i \in [K]} B_{i} \Bigr) ,
    \]
    our computational cost in \cref{eq:computational-cost-block-VIPs} differs in the second term. Consequently, our \DMvip may offer an advantage or disadvantage depending on the relative conditioning of $A_i$, $B_i$, and $c_i$ for $i \in [K]$.
\end{remark}

\end{document}